\documentclass[10pt]{amsart}
\theoremstyle{plain}
\usepackage{amsmath, amsfonts, amsthm, amscd}
\usepackage{graphics}
\usepackage{color}
\usepackage{epsfig}
\usepackage[all]{xy}
\setlength{\textwidth}{5.5in}
\setlength{\oddsidemargin}{0.50in}
\setlength{\evensidemargin}{0.40in}

\graphicspath{ {Figures/} }

\title[Khovanov homology and Sutured Floer homology]{On the Colored Jones Polynomial, Sutured Floer Homology, and Knot Floer Homology}
\author{J. Elisenda Grigsby}
\thanks{JEG was partially supported by an NSF postdoctoral fellowship.}
\address{J. Elisenda Grigsby\\Columbia Math Dept.;2990 Broadway MC4406; NY, NY 10027}
\email{egrigsby@math.columbia.edu}

\author{Stephan Wehrli}
\thanks{SW was partially supported by a Swiss NSF fellowship for prospective researchers.}
\address{Stephan Wehrli\\Columbia Math Dept.;2990 Broadway MC4406; NY, NY 10027}
\email{wehrli@math.columbia.edu}

\theoremstyle{plain}
\newtheorem{theorem}{Theorem}[section]
\newtheorem{lemma}[theorem]{Lemma}
\newtheorem{proposition}[theorem]{Proposition}
\newtheorem{corollary}[theorem]{Corollary}
\newtheorem{conjecture}[theorem]{Conjecture}

\newtheorem*{theoremA}{Theorem~\ref{thm:SpecSeq}}
\newtheorem*{theoremB}{Theorem~\ref{thm:KnotFloerRel}}
\theoremstyle{definition}
\newtheorem{definition}[theorem]{Definition}
\newtheorem{remark}[theorem]{Remark}
\newtheorem{example}[theorem]{Example}

\newcommand{\cP}{\mathcal{P}}
\newcommand{\cI}{\mathcal I}
\newcommand{\Wedge}{\Lambda}
\newcommand{\Ztwo}{\mathbb{Z}_2}

\newcommand{\Szabo}{{Szab{\'o}} }
\newcommand{\Juhasz}{{Juh{\'a}sz} }

\newcommand{\Q}{\ensuremath{\mathbb{Q}}}
\newcommand{\R}{\ensuremath{\mathbb{R}}}
\newcommand{\Z}{\ensuremath{\mathbb{Z}}}

\newcommand{\boldalpha}{\ensuremath{\mbox{\boldmath $\alpha$}}}
\newcommand{\boldbeta}{\ensuremath{\mbox{\boldmath $\beta$}}}

\newcommand{\boldeta}{\ensuremath{\mbox{\boldmath $\eta$}}}
\newcommand{\boldSigma}{\ensuremath{\mbox{\boldmath $\Sigma$}}}

\begin{document}
\bibliographystyle{plain}

\begin{abstract} Let $K \subset S^3$, and let $\widetilde{K}$ denote the preimage of $K$ inside its double branched cover, $\boldSigma(S^3,K)$.  We prove, for each integer $n>1$, the existence of a spectral sequence whose $E^2$ term is Khovanov's categorification of the reduced $n$--colored Jones polynomial of $\overline{K}$ (mirror of $K$) and whose $E^\infty$ term is the knot Floer homology of $(\boldSigma(S^3,K), \widetilde{K})$ (when $n$ odd) and of $(S^3, K \# K)$ (when $n$ even). 
A corollary of our result is that Khovanov's categorification of the
reduced $n$-colored Jones polynomial detects the unknot whenever $n>1$.
\end{abstract}
\maketitle

\section{Introduction}
Since their introduction less than ten years ago, Khovanov homology \cite{MR1740682} and Heegaard Floer homology \cite{MR2113019} have generated a tremendous amount of activity and a stunning array of applications.  

Although they have quite different definitions, the knot invariants associated to the two theories share many formal properties:

\begin{enumerate}
\item They both {\em categorify} classical knot polynomials.  I.e., each is a bigraded homology theory whose Euler characteristic is a classical knot polynomial (Jones and Alexander, resp.).
\item Both theories come equipped with a filtration which yields a concordance invariant ($s$ \cite{GT0402131} and $\tau$ \cite{MR2065507,GT0306378}, resp.)
\item Both theories are ``uninteresting'' (determined by classical invariants) on quasi-alternating knots \cite{GT07083249}.
\end{enumerate}

Ozsv{\'a}th-Szab{\'o} provided the first clue about a relationship between the two theories:

\begin{theorem}\cite[Theorem 1.1]{MR2141852} Let $L \subset S^3$ be a link and $\overline{L} \subset S^3$ its mirror.  There is a spectral sequence whose $E^2$ term is $\widetilde{Kh}(\overline{L})$ and which converges to $\widehat{HF}(\boldSigma(S^3,L))$.
\end{theorem}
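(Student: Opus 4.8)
The plan is to realize the double-branched-cover side combinatorially from the very cube of resolutions that defines Khovanov homology, and then to match the two pictures page by page. Start with a connected diagram $D$ of $L$ with $n$ crossings. For each vertex $v \in \{0,1\}^n$ the complete resolution $D_v$ is an unlink of, say, $c(v)$ circles, so its branched double cover is $\boldSigma(S^3,D_v) \cong \#^{c(v)-1}(S^1\times S^2)$ and hence $\widehat{HF}(\boldSigma(S^3,D_v)) \cong (\Ztwo)^{2^{c(v)-1}}$, which is exactly the rank of the \emph{reduced} Khovanov chain group at $v$. I would build a single Heegaard surface $\Sigma$, one set of attaching curves $\boldalpha$, and a cube of sets $\{\boldbeta_v\}$ so that each $(\Sigma,\boldalpha,\boldbeta_v)$ is a Heegaard diagram for $\boldSigma(S^3,D_v)$ and so that, for every edge $v<w$ of the cube, $(\Sigma,\boldalpha,\boldbeta_v,\boldbeta_w)$ is a Heegaard triple realizing the Ozsv{\'a}th--Szab{\'o} surgery exact triangle: the two resolutions at a crossing, together with $D$ itself, have branched double covers forming a surgery triad, because passing between two resolutions of a crossing amounts to a single Dehn surgery on the lift of an unknotted arc.

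Next I would assemble these data into an iterated mapping cone: the group $\bigoplus_v \widehat{CF}(\boldalpha,\boldbeta_v)$, with differential equal to the internal $\widehat{\partial}$'s, plus the holomorphic-triangle maps along edges, plus the higher holomorphic-polygon maps along higher faces that are needed to make the total differential square to zero (most cleanly organized via a ``hypercube of chain complexes'' formalism). The key geometric input is that this total complex is quasi-isomorphic to $\widehat{CF}(\boldSigma(S^3,L))$; one proves this by induction on $n$, using at each stage that the mapping cone of an edge map $\widehat{CF}(\boldSigma(S^3,D_0)) \to \widehat{CF}(\boldSigma(S^3,D_1))$ recovers $\widehat{CF}$ of the third term of the triad, together with the homotopy-coherence of the polygon maps and a standard admissibility/truncation argument for the multi-diagram. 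Filtering the total complex by the cube weight $|v|=\sum_i v_i$ then yields a spectral sequence with $E_1 \cong \bigoplus_v \widehat{HF}(\boldSigma(S^3,D_v))$ and $E_1$-differential induced by the edge maps.

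Finally I would identify this $E_1$ page with the reduced Khovanov complex of $\overline{L}$. The underlying groups already agree dimension-by-dimension by the remark above; the substance is to show that each edge (triangle) map is, up to the correct overall normalization, Khovanov's differential built from the Frobenius algebra $\Ztwo[X]/(X^2)$. Each edge map is the map on $\widehat{HF}$ induced by the branched double cover of a pair-of-pants cobordism — a merge or split of circles, joining $\#^{a}(S^1\times S^2)$ to $\#^{a\pm 1}(S^1\times S^2)$ — and I would argue that on homology such a map is forced, by compatibility with the $H_1$-action (equivalently the Frobenius-module structure) and with the internal grading, to be the Frobenius multiplication or comultiplication, up to automorphisms of source and target that can be pinned down by naturality together with a model computation at a single crossing; the passage to the mirror $\overline{L}$ is an artifact of aligning the direction of the surgery triad with Khovanov's ordering of the two resolutions. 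Granting this, $E_2 \cong \widetilde{Kh}(\overline{L})$ and the spectral sequence converges to $\widehat{HF}(\boldSigma(S^3,L))$.

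I expect the last step to be the main obstacle: the surgery exact triangle supplies the edge maps only up to chain homotopy and up to automorphisms of the pieces, so rigidly identifying the $E_1$-differential with Khovanov's — including the reduced theory, the sign and grading conventions, and the appearance of the mirror — is the delicate point. A secondary but still substantial difficulty is verifying that the iterated mapping cone genuinely computes $\widehat{HF}(\boldSigma(S^3,L))$, which requires controlling the higher polygon maps and establishing admissibility and a truncation argument for the Heegaard multi-diagram.
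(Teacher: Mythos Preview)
Your outline is essentially the Ozsv{\'a}th--Szab{\'o} argument from \cite{MR2141852}, and it is correct in its broad strokes. Note, however, that this theorem is \emph{cited}, not proved, in the present paper: it is quoted as motivation and background (see the attribution \cite[Theorem~1.1]{MR2141852}), so there is no proof here to compare against directly.

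That said, the paper does reprove this result as the $n=1$ case of its main theorem, and the route is close to what you describe but repackaged in the sutured setting. Rather than working with closed $3$--manifolds and $\widehat{HF}$, the paper removes a point from $K$ to obtain a balanced tangle $T^1\subset D\times I$, and runs the entire argument through sutured Floer homology of $\boldSigma(D\times I,T^1)$. The link surgeries spectral sequence is redeveloped for sutured manifolds (Section~\ref{sec:LinkSurg}), and the identification of the $E^1$ differential with the Khovanov differential is carried out via the equivalence of ``Khovanov'' and ``sutured Floer'' functors on resolved tangles (Proposition~\ref{prop:KhovSFHFunctEquiv}), using exactly the $\Lambda^*H_1$--module structure you mention. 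The payoff of the sutured reformulation is that it extends uniformly to $n$--cables and hence to the colored setting; your version, by contrast, stays in the closed world and is the most direct route to the $n=1$ statement alone. Your assessment of where the delicate points lie---pinning down the edge maps via the $H_1$--action and naturality, and the admissibility/finiteness issues for the multi-diagram---is accurate and matches what both \cite{MR2141852} and the present paper spend effort on.
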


In the above, $\widetilde{Kh}(L)$ refers to the reduced Khovanov homology of a link $L \subset S^3$ \cite{MR2034399}, $\widehat{HF}(Y)$ refers to the ($\wedge$ version of the) Heegaard Floer homology of the closed, connected, oriented $3$--manifold $Y$ \cite{MR2113019}, and $\widehat{HFK}(Y,K)$ refers to the ($\wedge$ version of the) knot Floer homology of the nullhomologous knot $K \subset Y$ \cite{MR2065507,GT0306378}.  Furthermore, for a codimension $2$ pair $(B,\partial B) \subset (X,\partial X)$, we use $\boldSigma(X,B)$ to denote the double-branched cover of $X$ over $B$ and $\widetilde{B}$ to denote the preimage of $B$ inside $\boldSigma(X,B)$.  Throughout the paper, all Khovanov and Heegaard Floer homology theories will be considered with $\Z_2$ coefficients.

Our present aim is to position Ozsv{\'a}th-Szab{\'o}'s result in a more general context.  In particular, if $K \subset S^3$ is a knot, then Khovanov associates to $K$ a whole sequence of invariants, $\widetilde{Kh}_n(K)$, categorifying the reduced $n$--colored Jones polynomials \cite{MR2124557}.  

 
We prove:

\begin{theorem}\label{thm:Main}
Let $K \subset S^3$ be an oriented knot, $\overline{K} \subset S^3$ its mirror, and $K^r$ its orientation reverse.  For each $n \in \mathbb{Z}_{> 0}$, there is a spectral sequence, whose $E^2$ term is $\widetilde{Kh}_n(\overline{K})$ and whose $E^\infty$ term is
\[HF_n(K) := \left\{\begin{array}{cc}
    \widehat{HF}(\boldSigma(S^3,K)) & \mbox{if $n = 1$},\\
    \widehat{HFK}(\boldSigma(S^3,K), \widetilde{K}) & \mbox{if $n>1$ and odd}\\
    \widehat{HFK}(S^3, K \# K^r) & \mbox{if $n$ is even},
    \end{array}\right.\]
\end{theorem}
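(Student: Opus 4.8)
The plan is to reinterpret both sides in the language of sutured Floer homology. In outline: realize $\widetilde{Kh}_n(\overline K)$ as a distinguished graded summand of the sutured (annular) Khovanov homology of an $n$-cable of $\overline K$; run Ozsv\'ath--Szab\'o's branched double cover construction inside Juh\'asz's sutured Floer homology; and then identify the limiting group by computing an explicit double branched cover and applying Juh\'asz's surface decomposition theorem. The parity dichotomy in the statement should emerge only at the very end, from the fact that the double branched cover of the exterior of $K$ over an $n$-cable is connected precisely when $n$ is odd.

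\textbf{Step 1 (a sutured model for $\widetilde{Kh}_n$).} First I would recall Khovanov's construction: from a based diagram of $\overline K$ one cables by $n$ parallel strands, inserts the categorified Jones--Wenzl projector, reduces at the basepoint, and takes homology. The reformulation I want is that, over $\Ztwo$, this complex is quasi-isomorphic to the summand of the reduced sutured annular Khovanov complex of the $n$-cable $\mathbb L_n$ --- viewed as a link in the complement $V\cong A\times I$ of a meridional unknot $U$ of $\overline K$ --- lying in the extremal degree of the ``winding number'' (annular) filtration. Geometrically that stratum is the one in which every cable strand winds maximally around $V$, so that no turnback survives and the categorified projector acts as the identity; this is why the projector disappears from the picture. (For $n=1$ the statement is degenerate and is exactly \cite{MR2141852}.) Matching Khovanov's grading normalizations so that this summand is precisely $\widetilde{Kh}_n(\overline K)$ is routine but needs care.

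\textbf{Step 2 (the sutured Ozsv\'ath--Szab\'o spectral sequence).} Next I would establish, by transporting the argument of \cite{MR2141852} into Juh\'asz's setting, a spectral sequence whose $E^1$ page is the cube-of-resolutions complex of an arbitrary link diagram $\mathbb L\subset V$ and which converges to $SFH$ of the double branched cover $\boldSigma(V,\mathbb L)$ with the sutures $\Gamma$ pulled back from $\bdry V$. The only real inputs are that sutured Floer homology carries the same surgery exact triangles that Ozsv\'ath--Szab\'o use, and a computation of $SFH$ of the double branched covers of crossingless annular diagrams (products of standard pieces). Because the annular grading is respected by the resolution maps, this spectral sequence splits along the winding filtration; passing to the extremal winding grading and inserting Step 1 produces a spectral sequence with $E^2\cong\widetilde{Kh}_n(\overline K)$ converging to the extremal-grading part of $SFH\big(\boldSigma(V,\mathbb L_n),\Gamma\big)$.

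\textbf{Step 3 (identifying the limit --- the main obstacle).} The remaining, and hardest, task is to recognize this $SFH$ group as $HF_n(K)$. Decomposing $S^3 = \nu(\overline K)\cup(S^3\setminus\nu(\overline K))$, the double branched cover of $S^3$ over $\mathbb L_n$ is assembled from (i) the double branched cover of $\nu(\overline K)$ over $n$ parallel cores --- the product of $S^1$ with the double cover of a disk branched over $n$ points, a surface whose genus grows linearly with $n$ --- and (ii) the double cover of the knot exterior $S^3\setminus\nu(\overline K)$, which is the \emph{connected} double branched cover $\boldSigma(S^3,K)\setminus\nu(\widetilde K)$ when $n$ is odd but \emph{two disjoint copies} of the exterior when $n$ is even (re-joined into a connected manifold by piece (i)). This is the source of the cases in the theorem. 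The reason for having passed to the extremal winding grading is that, by Juh\'asz's surface decomposition theorem, that grading computes $SFH$ of the sutured manifold obtained from $\boldSigma(V,\mathbb L_n)$ by decomposing along the vertical annuli that carry the extra handles of the surface in (i); these decompositions cut the surface down to a disk when $n$ is odd and to an annulus when $n$ is even. In the odd case the decomposed manifold is then the exterior of $\widetilde K\subset\boldSigma(S^3,K)$ with meridional sutures, whose $SFH$ is $\widehat{HFK}(\boldSigma(S^3,K),\widetilde K)$ (and $\widehat{HF}(\boldSigma(S^3,K))$ when $n=1$); in the even case a classical cut-and-paste glues the two copies of the knot exterior along the connect-sum annulus, identifying the decomposed manifold with the exterior of $K\# K^r\subset S^3$ and its $SFH$ with $\widehat{HFK}(S^3,K\# K^r)$. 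All the genuine difficulty sits in this last step: making Khovanov's grading conventions line up in Step 1, checking that the decomposing surfaces are well-groomed in Juh\'asz's sense, and matching the $\Ztwo$ $\mathrm{Spin}^c$/Alexander gradings so that ``extremal winding grading'' on the Khovanov side corresponds to ``the $SFH$ summand cut out by the surface decomposition'' on the Floer side.
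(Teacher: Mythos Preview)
Your approach is valid but genuinely different from the paper's, and it is worth spelling out the contrast. You work in the annular setting: place the $n$-cable as a closed link $\mathbb L_n$ in a solid torus $V$, invoke a Roberts-type spectral sequence from sutured annular Khovanov homology to $SFH(\boldSigma(V,\mathbb L_n))$, and isolate $\widetilde{Kh}_n(\overline K)$ as the extremal winding-number summand. The paper instead removes a ball neighborhood of a point on $K$ to obtain an $n$-strand \emph{tangle} $T^n$ in $D\times I$ and builds the link surgeries spectral sequence there. The Jones--Wenzl projector is then handled not by an auxiliary grading but by a direct vanishing result: for any resolved tangle $T$ that \emph{backtracks} (has a turnback), the paper shows simultaneously that the Khovanov module $V(T)$ is zero by definition and that $SFH(\boldSigma(D\times I,T))=0$ by exhibiting a Heegaard diagram with disjoint $\alpha$ and $\beta$ curves. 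Thus the $E^\infty$ term is already the full group $SFH(\boldSigma(D\times I,T^n))$, with no need to project onto a graded piece; surface decomposition enters only afterwards, in identifying $SFH(\boldSigma(D\times I,T^n))$ with $HF_n(K)$.

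What each approach buys: the paper's tangle setup makes the match between the Khovanov and Floer functors on resolved objects completely elementary (both are rank-one exterior algebras, both vanish on backtrackers), at the cost of setting up sutured multi-diagrams and polygon maps from scratch. Your annular route leverages existing machinery, but you must verify that all higher differentials in the spectral sequence preserve the winding grading, and you use surface decomposition twice---once to cut down to the extremal summand and again to simplify the branched cover---whereas the paper uses it once. The paper explicitly notes (in the introduction) that the odd-$n$ case of its spectral sequence embeds as a direct summand of Roberts's; your proposal is essentially the inverse of that observation, carried out uniformly in $n$.
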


In the above, $HF_n(K)$ is actually a grading-shifted version of the stated homology group, where the shift depends, in a prescribed way, upon $n$. We compute this grading shift explicitly in Section \ref{sec:RelKnotFloer}.  In that section, we also mention a conjectural relationship between the Khovanov and Floer gradings which would imply a connection, for large $n$, between the so-called {\em homological width} of $\widetilde{Kh}_n(K)$ and the knot genus.  

Theorem \ref{thm:Main} yields the following easy corollary:

\begin{corollary}\label{cor:DetectsUnknot}
$\widetilde{Kh}_n(K)$ detects the unknot for all $n>1$.
\end{corollary}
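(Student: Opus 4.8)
The plan is to deduce Corollary~\ref{cor:DetectsUnknot} from Theorem~\ref{thm:Main} by leveraging the known unknot-detection properties of the $E^\infty$ page together with a rank count. Fix $n>1$. Suppose $\widetilde{Kh}_n(K) \cong \widetilde{Kh}_n(U)$, where $U$ is the unknot; since reduced colored Khovanov homology is a link invariant we only need to rule out every nontrivial $K$ having the same group as $U$. The first step is to record what $\widetilde{Kh}_n(U)$ is: for the unknot the colored invariant is supported in the expected minimal rank, so that $\mathrm{rk}\,\widetilde{Kh}_n(U) = r_n$ for an explicit small number $r_n$ (it is the dimension of the $E^2$ page coming from the trivial cabling pattern — essentially the rank of the appropriate Jones--Wenzl–colored skein module of the unknot, which is $n$ or $2^{n-1}$ depending on the parity convention, but in any case a fixed finite number independent of the unknot diagram).

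Next I would run the spectral sequence of Theorem~\ref{thm:Main} for $\overline{K}$. Since a spectral sequence can only decrease total rank from page to page, we get
\[
\mathrm{rk}\,\widetilde{Kh}_n(\overline{K}) \;=\; \mathrm{rk}\,E^2 \;\geq\; \mathrm{rk}\,E^\infty \;=\; \mathrm{rk}\,HF_n(K).
\]
Because $\widetilde{Kh}_n$ of $\overline{K}$ and of $K$ have the same rank (mirroring only reverses gradings), the hypothesis $\widetilde{Kh}_n(K)\cong\widetilde{Kh}_n(U)$ forces $\mathrm{rk}\,HF_n(K) \leq r_n$. Now I invoke the unknot-detection results for the Floer-theoretic targets. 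For $n$ odd, $HF_n(K)$ is $\widehat{HFK}(\boldSigma(S^3,K),\widetilde{K})$; the key input is that $\widetilde K$ is a fibered-type / genus-carrying knot in the branched double cover and its knot Floer homology detects whether that pair is the trivial one (equivalently, by the work on branched covers and the behavior of $\widehat{HFK}$ under genus, $\mathrm{rk}\,\widehat{HFK}(\boldSigma(S^3,K),\widetilde K)$ is minimized exactly when $K$ is the unknot). For $n$ even, $HF_n(K) = \widehat{HFK}(S^3, K\#K^r)$, and here I would use the Ozsv\'ath--Szab\'o genus-detection theorem for knot Floer homology directly: the rank of $\widehat{HFK}(S^3,K\#K^r)$ in its top Alexander grading equals the rank in the top grading for $K$, which is $1$ iff $g(K)=0$ iff $K$ is the unknot; more to the point, the total rank is bounded below by a strictly increasing function of the genus, so a uniform bound $\leq r_n$ forces $g(K)=0$.

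Assembling these: the rank inequality coming from the spectral sequence, combined with the genus/unknot detection property of $\widehat{HFK}$ (and its branched-cover analogue), shows that $\mathrm{rk}\,\widetilde{Kh}_n(K) = \mathrm{rk}\,\widetilde{Kh}_n(U)$ already forces $K$ to be the unknot, which is exactly the assertion of the corollary. One should be slightly careful about the grading shift $HF_n$ alluded to after Theorem~\ref{thm:Main}, but since an overall grading shift does not affect total rank, and since the detection statements used are about ranks (total, or in the extremal Alexander grading, which only shifts rigidly), this does not obstruct the argument.

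The main obstacle I anticipate is not the spectral-sequence bookkeeping — that is immediate — but justifying the genus/unknot detection for the odd-$n$ target $\widehat{HFK}(\boldSigma(S^3,K),\widetilde K)$: one needs that the preimage knot $\widetilde K$ in the double branched cover is nullhomologous (or that one is working in the right $\mathrm{Spin}^c$/homological setting) and that its knot Floer homology genuinely sees the triviality of $(\boldSigma(S^3,K),\widetilde K)$, which in turn should reduce to the fact that $\boldSigma(S^3,K) = S^3$ with $\widetilde K$ unknotted precisely when $K$ is unknotted — a statement that itself relies on the Smith-conjecture-type input (the Gordon--Litherland / Waldhausen line, or simply that a knot is determined by its double branched cover data in this setting) plus the genus-detection theorem for $\widehat{HFK}$. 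For $n$ even the corresponding obstacle evaporates, since $K\#K^r$ is a genuine knot in $S^3$ and classical $\widehat{HFK}$ genus detection applies verbatim, with $g(K\#K^r)=2g(K)$.
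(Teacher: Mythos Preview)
Your overall architecture matches the paper's: use the rank inequality from the spectral sequence together with genus detection for $\widehat{HFK}$. However, there is a genuine gap.

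The gap is your treatment of $r_n = \mathrm{rk}\,\widetilde{Kh}_n(U)$. You speculate that $r_n$ might be $n$ or $2^{n-1}$; in fact $r_n = 1$ (the reduced $n$--colored invariant of the unknot is one-dimensional). This is not a cosmetic point: your subsequent step, ``the total rank of $\widehat{HFK}$ is bounded below by a strictly increasing function of the genus,'' is neither standard nor true, and without it the bound $\mathrm{rk}\,HF_n(K)\le r_n$ proves nothing when $r_n$ is large. For example, if $K$ is the trefoil then $\widehat{HFK}(S^3,K\#K^r)$ has rank $9$, which is $\le 2^{n-1}$ for $n\ge 5$. The paper's argument works precisely because $r_n=1$: then one only needs the elementary fact (from \cite{MR2023281}) that a nullhomologous knot of positive genus has $\mathrm{rk}\,\widehat{HFK}>1$.

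For the odd case you also overcomplicate matters. No Smith-conjecture input is needed; one simply observes that a Seifert surface for $K$ lifts to a Seifert surface of the same genus for $\widetilde{K}$ in $\boldSigma(S^3,K)$, so $\widetilde{K}$ is nullhomologous with $g(\widetilde{K})=g(K)>0$, and the cited rank bound applies directly. The paper's proof is exactly this two-line observation plus the rank inequality.
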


\begin{proof}
\cite{MR2023281} tells us that if $K \subset Y$ is a nullhomologous knot and $g(K) > 0$, then \[rk(\widehat{HFK}(Y,K))>1.\]
Now, suppose that $K \subset S^3$ is not the unknot, $U$.

$g(K \# K^r) = 2g(K)$, and $g(\widetilde{K}) = g(K)$, so $rk(HF_n(K)) > 1$ for $n>1$, by Theorem \ref{thm:Main}.  

The existence of the spectral sequence 
\[\widetilde{Kh}_n(K) \rightarrow HF_n(K)\]
then implies that \[ rk(\widetilde{Kh}_n(K)) \geq rk(HF_n(K)) > 1\] for all $n > 1$.

In particular, $rk(\widetilde{Kh}_n(K)) \neq 1$ when $K \neq U$.
\end{proof}

We remark that Andersen \cite{JEAndersen} has announced a proof of a related result--namely, that the full collection of colored Jones polynomials is an unknot detector.  His result arises from a quite different perspective, via an argument relating the growth rate of the invariants to the nontriviality of $SU(2)$ representations of the fundamental group of surgeries on the knot.  

We would also like to mention the work of Hedden \cite{GT08054418}, who, after hearing a talk given by the second author on a weaker version of Theorem \ref{thm:Main} (see the Acknowledgements section), was able to prove via existing Heegaard Floer homology techniques that the Khovanov homology of the $2$--cable detects the unknot.

Before proceeding to the proof, we pause to say a few words about our techniques.  Throughout, we make heavy use of {\em sutured Floer homology}, a beautiful theory developed by \Juhasz which associates to a sutured $3$--manifold (in the sense of \cite{MR723813}) Heegaard Floer-type homology groups \cite{MR2253454}.  In fact, sutured Floer homology appears, in general, to have a tight connection to various Khovanov-type constructions associated to tangles.  In this direction, we explore the relationship between our work and that of Lawrence Roberts \cite{GT07060741} in an upcoming paper, where we interpret our spectral sequence (for odd $n$) as a special case of a direct summand of the one he constructs.  More generally, categorifications of Kauffman bracket skein modules of $I$--bundles over surfaces \cite{MR2113902} certainly merit further attention.

For the present application, we begin with a knot $K \subset S^3$, constructing from it a {\em balanced tangle} $T^n \subset D \times I$ (see Definition \ref{defn:balancedtangle} and the discussion preceding it) by removing a neighborhood of a point and taking the $n$--cable.  There are then two natural chain complexes one can associate to $T^n$: one obtained using a Khovanov-type functor (Section \ref{sec:KhFunctor}) and the other using a sutured Floer-type functor (Section \ref{sec:SFFunctor}).

The key observation is that the generators of these chain complexes agree on admissible, balanced, resolved tangles, defined in Section \ref{sec:Resolved}.  Furthermore, both theories satisfy certain skein relations which allow us to build the chain complex associated to a tangle from its cube of resolutions.  More specifically, associated to the $i$-th crossing of a projection $\mathcal{P}(T)$ of a balanced tangle $T$, one can form $\mathcal{P}^i_0(T)$ and $\mathcal{P}^i_1(T)$, the so-called $0$ and $1$ resolutions of the crossing (see Figure \ref{fig:Resolutions}).  In both the Khovanov and sutured Floer settings, the chain complex for the tangle can then be defined by iteratively resolving crossings.  In Khovanov's theory, this structure is part of the definition.  In sutured Floer homology, this structure arises because of the existence of a link surgeries spectral sequence, described in Section \ref{sec:LinkSurg}, which relates the sutured Floer homologies of sutured $3$--manifolds differing by triples of surgeries along an imbedded link.

Our main theorem is, therefore, really an amalgam of two theorems:

\begin{theoremA} 
Let $K \subset S^3$ be an oriented knot and $\overline{K} \subset S^3$ its mirror.  For each $n \in \mathbb{Z}_{>0}$, there is a spectral sequence, whose $E^2$ term is $\widetilde{Kh}_n(\overline{K})$ and whose $E^\infty$ term is $SFH(\boldSigma(D \times I,T^n))$.
\end{theoremA}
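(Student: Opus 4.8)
The plan is to produce the claimed spectral sequence as the \emph{link surgeries spectral sequence} of Section~\ref{sec:LinkSurg}, applied to a framed link inside $\boldSigma(D \times I, T^n)$ built from the crossings of a projection of $T^n$, and then to identify its $E^1$ page, together with the differential $d_1$, with the cube of resolutions computing $\widetilde{Kh}_n(\overline{K})$. This is the sutured-manifold analogue of Ozsv{\'a}th and Szab{\'o}'s proof of \cite[Theorem~1.1]{MR2141852}, with additional bookkeeping to handle the $n$--cabling.

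First I would fix a planar projection $\cP = \cP(T^n)$ with $c$ crossings and form its cube of resolutions: to each vertex $v \in \{0,1\}^c$ one associates the crossingless balanced resolved tangle $\cP_v \subset D \times I$ obtained by taking the $0$-- or $1$--resolution at the $i$--th crossing according to $v_i$ (Figure~\ref{fig:Resolutions}). By Section~\ref{sec:Resolved}, each $\cP_v$ is either \emph{admissible} or not. On the Khovanov side, the construction recalled in Section~\ref{sec:KhFunctor} presents the reduced colored chain complex $\widetilde{CKh}_n(\overline{K})$ as the total complex of this cube: an admissible $\cP_v$ is assigned the tensor product $V_{Kh}(\cP_v)$ of one copy of $V = \Ztwo[x]/(x^2)$ per closed component of $\cP_v$ (with the reduced basepoint accounted for as there), an inadmissible $\cP_v$ is assigned $0$ (this vanishing encodes the Jones--Wenzl projector), and the edge maps are the elementary merge and split maps of the Frobenius structure on $V$. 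Thus $\widetilde{Kh}_n(\overline{K})$ is by definition the homology of this complex.

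On the sutured side, each crossing of $\cP$ lifts, in the double branched cover, to a neighborhood of a knot, and the two resolutions of that crossing correspond to two of the three sutured manifolds appearing in the associated surgery exact triangle, the third being $\boldSigma$ of the tangle with the crossing left in. Iterating over all $c$ crossings exhibits $\boldSigma(D \times I, T^n)$ as one vertex of a hypercube of surgeries on a framed link $\mathbb{L} \subset \boldSigma(D \times I, T^n)$, the remaining vertices being the $\boldSigma(D \times I, \cP_v)$. Feeding $\mathbb{L}$ into the link surgeries spectral sequence of Section~\ref{sec:LinkSurg} gives a spectral sequence converging to $SFH(\boldSigma(D \times I, T^n))$ whose $E^1$ page is $\bigoplus_v SFH(\boldSigma(D \times I, \cP_v))$ and whose $d_1$ differential is the sum, over the edges of the cube, of the maps induced by the surgery cobordisms. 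Here one uses the computation---via \Juhasz's results on $SFH$ and the identification of Section~\ref{sec:SFFunctor}---that for admissible $\cP_v$ there is a natural isomorphism $SFH(\boldSigma(D \times I, \cP_v)) \cong V_{Kh}(\cP_v)$, while $SFH(\boldSigma(D \times I, \cP_v)) = 0$ for inadmissible $\cP_v$ (the corresponding sutured manifold not being taut); this is precisely the ``the generators agree'' observation from the introduction.

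It then remains to match $d_1$ with the Khovanov differential. Along each edge of the cube the resolved tangle changes by a single saddle move, which on closed components either does nothing, merges two into one, or splits one into two; I would compute the corresponding surgery map in a Heegaard diagram adapted to this local change and identify it with the identity, multiplication, or comultiplication map of the Frobenius structure on $V$. \textbf{This last step is the main obstacle.} It is the sutured-Floer counterpart of the Reidemeister/handleslide analysis carried out in \cite{MR2141852}, and it requires both choosing Heegaard diagrams that are mutually compatible across the whole cube and carefully propagating the $n$--cabled, reduced structure so that basepoints and admissibility conditions agree on the two sides; the mirror $\overline{K}$ enters, exactly as in \cite{MR2141852}, from the orientation conventions relating the surgery triangle to the $0$-- and $1$--resolutions. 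Granting this identification, $(E^1, d_1)$ is isomorphic as a chain complex to $\widetilde{CKh}_n(\overline{K})$, so $E^2 \cong \widetilde{Kh}_n(\overline{K})$; since the cube is finite, the spectral sequence converges, with $E^\infty$ the associated graded of $SFH(\boldSigma(D \times I, T^n))$ for the induced (bounded) filtration. This proves Theorem~\ref{thm:SpecSeq}.
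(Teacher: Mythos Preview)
Your proposal is correct and follows essentially the same route as the paper's proof (Proposition~\ref{prop:SpecSeq}): apply the link surgeries spectral sequence to the framed link in $\boldSigma(D \times I, T^n)$ built from the crossings of a projection, then identify $(E^1, d_1)$ with the Khovanov cube of resolutions via the equivalence of functors in Proposition~\ref{prop:KhovSFHFunctEquiv}. For the step you flag as the main obstacle---matching $d_1$ with the Khovanov edge maps---the paper avoids a full case-by-case Heegaard-diagram computation by treating both $V(T)$ and $SFH(\boldSigma(D\times I,T))$ as rank-one free modules over $\Lambda^*(H_1^\circ(Y,\partial Y)\otimes\Ztwo)$ and invoking naturality of the triangle maps under the $H_1$-action (Proposition~\ref{prop:H1action4man}), so that only the behavior on the top-degree generator needs to be checked directly as in \cite{MR2141852}.
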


\begin{theoremB}
Let $n \in \mathbb{Z}_{>0}$.

\[SFH(\boldSigma(D \times I, T^n)) \cong \left\{\begin{array}{cc}
    \widehat{HF}(\boldSigma(S^3,K)) & \mbox{if $n = 1$},\\
    \widehat{HFK}(S^3, K \# K^r) & \mbox{if $n$ is even},\\
    \widehat{HFK}(\boldSigma(S^3,K), \widetilde{K}) & \mbox{if $n>1$ and odd}
    \end{array}\right.\] 
\end{theoremB}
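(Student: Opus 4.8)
The plan is to identify the sutured manifold $\boldSigma(D\times I,T^n)$ up to sutured diffeomorphism and then invoke \Juhasz's dictionary relating sutured Floer homology to the hat--flavored Heegaard Floer and knot Floer invariants \cite{MR2253454}.

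First I would write the ball $D\times I$ as $\nu\cup X$, where $\nu\cong D^2\times I$ is a tubular neighborhood of the knotted arc $T^1$ with $T^1$ its core, and $X=(D\times I)\setminus\operatorname{int}\nu$; since $T^1$ is $K$ with a small arc deleted, $X$ is diffeomorphic to the exterior of $K$ in $S^3$. The cable $T^n$ lies inside $\nu$ as $n$ parallel copies of the core, so passing to double branched covers gives
\[
\boldSigma(D\times I,T^n)\;=\;\bigl(F\times I\bigr)\;\cup\;\boldSigma(X,\emptyset),
\]
glued along the double cover of the annulus $\partial\nu\cap\partial X$. Here $F$ is the double branched cover of $D^2$ over $n$ marked points --- a compact orientable surface with one boundary circle when $n$ is odd and two when $n$ is even --- while $\boldSigma(X,\emptyset)$ is the double cover of the exterior of $K$ classified by sending a meridian of $K$ to the class $n\bmod 2$ (its linking number with $T^n$). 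This is where the parity of $n$ enters and is the conceptual crux: when $n$ is odd this cover is connected and equals the $2$--fold cyclic cover of the exterior of $K$, i.e. $\boldSigma(S^3,K)\setminus\nu(\widetilde{K})$; when $n$ is even it is disconnected, a pair of copies of the exterior of $K$ which (with one copy reassembled through an orientation--reversing identification) glue up into the exterior of $K\# K^r$. For $n=1$, $F$ is a disk, $F\times I$ is a ball, and the gluing simply deletes a ball from $\boldSigma(S^3,K)$.

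Next I would recognize $F\times I$ as a product region for the sutured structure of Section~\ref{sec:SFFunctor}, in which $R_+$ and $R_-$ are the preimages of $D\times\{1\}$ and $D\times\{0\}$: every properly embedded arc of $F$ crossed with $I$ is then a product disk meeting the sutures in two points. Decomposing along a maximal collection of such product disks --- and, when $n$ is even, one further product annulus in the residual $(\mathrm{annulus})\times I$ joining the two halves --- peels the $F\times I$ summand off without altering $SFH$, by \Juhasz's surface decomposition theorem, and reduces $\boldSigma(D\times I,T^n)$ to: for $n=1$, the manifold $\boldSigma(S^3,K)$ with a ball removed, carrying a single suture on its $S^2$ boundary; for odd $n>1$, the exterior of $\widetilde{K}$ in $\boldSigma(S^3,K)$ with two meridional sutures; for even $n$, the exterior of $K\# K^r$ in $S^3$ with two meridional sutures. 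Applying \Juhasz's identifications --- the $SFH$ of a closed $3$--manifold minus a ball with one suture is its $\widehat{HF}$, and the $SFH$ of a knot exterior with two meridional sutures is $\widehat{HFK}$ --- then yields the three cases of the statement.

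I expect the main difficulty to lie in the last two steps jointly: one must verify that the decomposing disks and annuli satisfy the hypotheses of \Juhasz's surface decomposition theorem, so that $SFH$ is genuinely left unchanged rather than merely related to a sum over relative $\mathrm{Spin}^{c}$ structures, and one must track the sutures through the whole reduction --- confirming that what survives is exactly the single $S^2$ suture, or the two \emph{meridional} sutures with the correct coorientation, and, in the even case, that the reassembly produces the exterior of $K\# K^r$ rather than of $K\# K$. The underlying topology --- that $\boldSigma(X,\emptyset)$ is, according to the parity of $n$, either $\boldSigma(S^3,K)\setminus\nu(\widetilde{K})$ or two copies of the exterior of $K$ --- is routine once the neighborhood/exterior decomposition of the cable is in place; the sutured bookkeeping around it is where the work lies.
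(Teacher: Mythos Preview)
Your overall strategy coincides with the paper's: decompose $\boldSigma(D\times I,T^n)$ as the appropriate double cover $Y$ of the knot exterior glued to a product piece $F\times I$, then use a surface decomposition to strip away the product piece and land on a knot complement with meridional sutures. The topological identification of $Y$ according to the parity of $n$ is exactly Proposition~\ref{prop:GlueSutMan}.

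There is, however, a genuine gap in your surface--decomposition step. In the glued sutured manifold, the lateral boundary $\partial F\times I$ of the product piece is the gluing annulus $\widetilde A$; it lies in the \emph{interior} of $\boldSigma(D\times I,T^n)$, not on the suture set. Concretely, $R_\pm$ are the tops and bottoms $F\times\{4\}, F\times\{-1\}$ of the product piece, while the suture is the complementary annulus on $\partial\widetilde Y$. So if $a\subset F$ is a properly embedded arc, the disk $a\times I$ has the two arcs $(\partial a)\times I$ sitting on $\widetilde A$, which is interior --- it is \emph{not} a properly embedded decomposing surface, and in particular not a product disk ``meeting the sutures in two points.'' Your reduction therefore does not go through as written.

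The paper avoids this by using a \emph{closed} curve $C\subset F$ instead of an arc: then $S=C\times I$ is a properly embedded annulus with $\partial S\subset R_+\cup R_-$. This is no longer a product disk, so one cannot simply quote that product decompositions preserve $SFH$; instead one checks (Lemma~\ref{lemma:CutalongS}) that every relative $\mathrm{Spin}^c$ structure is outer with respect to $S$, which follows immediately from the Heegaard picture since no $\boldalpha$ or $\boldbeta$ curves enter the $F$--region of $\Sigma_n$. After this single annulus decomposition the paper compares Heegaard diagrams directly rather than iterating further cuts. Your argument can be repaired along these lines --- replace arcs by closed curves, or genuinely extend your disks through a boundary--parallel collar of $\widetilde Y$ to reach the actual suture --- but either fix requires exactly the extra verification you flagged as the ``main difficulty,'' and it is not automatic.
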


Theorem \ref{thm:SpecSeq} will require us to review sutured Floer homology as well as extend some basic Heegaard Floer-type definitions and results to the sutured Floer setting (definition of sutured Floer multi-diagrams and the natural maps associated to them, validity of Lipshitz's Maslov index formula, finiteness of holomorphic disk counts for admissible sutured multi-diagrams, etc.).  These results are included for completeness and also because they may prove useful for future applications of sutured Floer homology; however, they may be safely skimmed on a first reading.

Theorem \ref{thm:KnotFloerRel} is proved using a simple topological observation coupled with some techniques from sutured Floer homology (in particular, its behavior under surface decompositions).  

The paper is organized as follows:

In Section \ref{sec:StandardDefn}, we recall the necessary sutured Floer and Heegaard Floer homology background and introduce two sutured Floer operations--gluing and branched covering--that we will use repeatedly.  In this section, we also verify the validity of Lipshitz's Maslov index formula in the sutured Floer setting.

Section \ref{sec:MultiDiag} is a compilation of the technical results necessary for the statement and proof of the link surgeries spectral sequence in the next section.  We define sutured Heegaard multi-diagrams and and discuss how they can be used to define maps between sutured Floer chain complexes by examining moduli spaces of holomorphic polygons.  We also set up the appropriate admissibility hypotheses ensuring the finiteness of these polygon counts in the setting of interest to us.  We close the section by discussing polygon associativity and the naturality of triangle maps under a $\Lambda^*(H_1)$ action.

In Section \ref{sec:LinkSurg}, we prove the analogue of the link surgeries spectral sequence in the sutured Floer homology setting.  A corollary will be the identification of $SFH(\boldSigma(D \times I, T))$ for any admissible, balanced tangle $T$, with the homology of a certain iterated mapping cone.

In Section \ref{sec:SpecSeq}, we prove Theorem \ref{thm:SpecSeq} by demonstrating the equivalence of the Khovanov and sutured Floer functors on admissible, balanced, resolved tangles $T \subset D \times I$ and applying the link surgeries spectral sequence.

In Section \ref{sec:RelKnotFloer}, we give a proof of Theorem \ref{thm:KnotFloerRel}, followed by an explicit calculation of the grading shifts between $SFH(\boldSigma(D\times I, T^n))$ and $HF_n(K)$.  We conclude with a conjecture relating gradings in Khovanov homology to gradings in sutured Floer homology.

\begin{figure}
\begin{center}
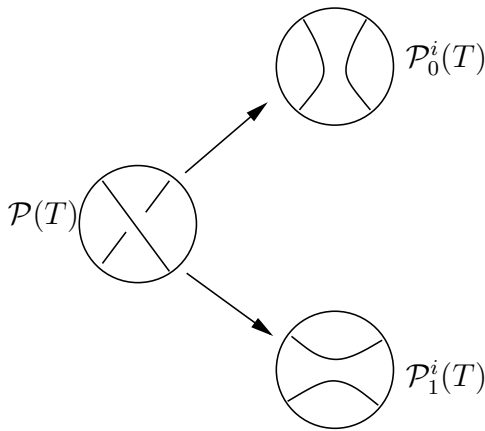
\end{center}
\caption{An illustration of the $0$ and $1$ resolutions of a crossing in a tangle projection.  Let $\mathcal{P}(T)$ be a projection of the tangle $T$.  Then $\mathcal{P}^i_0(T)$ and $\mathcal{P}^i_1(T)$ are obtained by replacing a small neighborhood of the $i$-th crossing as specified.}
\label{fig:Resolutions}
\end{figure}


\subsection{Acknowledgements}
We warmly thank Robert Lipshitz and Peter Ozsv{\'a}th, who patiently answered our technical questions.  We are also greatly indebted to Ciprian Manolescu.  In a previous version of Theorem \ref{thm:Main}, we were able to compute $HF_n(K)$ explicitly only for sufficiently large $n$.  After hearing a talk on this result given by the second author at the ``Knots in Washington XXVI'' conference at George Washington University, Manolescu shared a key insight about the non-existence of certain holomorphic disks that led to a proof of the current much stronger form of Theorem \ref{thm:Main}.

\section{Floer homology background: Notation and Standard Constructions} \label{sec:StandardDefn}
In this section, we review standard definitions and notations, as well as prove a few simple results about sutured manifolds and sutured Floer homology.  Please see \cite{MR2253454} and \cite{MR2113019} for more details.

\begin{definition} \cite{MR723813} \label{defn:suturedman}
A {\em sutured manifold} $(Y,\Gamma)$ is a compact, oriented $3$--manifold with boundary $\partial Y$ along with a set $\Gamma \subset \partial Y$ of pairwise disjoint annuli $A(\Gamma)$ and tori $T(\Gamma)$.  The interior of each component of $A(\Gamma)$ contains a {\em suture}, an oriented simple closed curve which is homologically nontrivial in $A(\Gamma)$.  The union of the sutures is denoted $s(\Gamma)$.

Every component of $R(\Gamma) = \partial Y - \mbox{Int}(\Gamma)$ is assigned an orientation compatible with the oriented sutures.  More precisely, if $\delta$ is a component of $\partial R(\Gamma)$, endowed with the boundary orientation, then $\delta$ must represent the same homology class in $H_1(\Gamma)$ as some suture.  Let $R_+(\Gamma)$ (resp., $R_-(\Gamma)$) denote those components of $R(\Gamma)$ whose normal vectors point out of (resp., into) $Y$.
\end{definition}

Sutured manifolds can be described using sutured Heegaard diagrams.  Here (and throughout), $I$ denotes the interval $[-1,4]$.

\begin{definition} \cite[Defn. 2.7, 2.8]{MR2253454} \label{defn:suturedHD} A {\em sutured Heegaard diagram} is a tuple $(\Sigma,\boldalpha,\boldbeta)$, where $\Sigma$ is a compact, oriented surface with boundary, and $\boldalpha = \{\alpha_1, \ldots, \alpha_d\}$, $\boldbeta = \{\beta_1, \ldots, \beta_d\}$ are two sets of pairwise disjoint simple closed curves in Int$(\Sigma)$.  Every sutured Heegaard diagram uniquely defines the sutured manifold obtained by attaching $3$--dimensional $2$--handles to $\Sigma \times I$ along the curves $\alpha_i \times \{-1\}$ and $\beta_j \times \{4\}$ for $i,j \in \{1, \ldots d\}$.  $\Gamma$ is $\partial \Sigma \times I$, and $s(\Gamma) = \partial \Sigma \times \{\frac{3}{2}\}$.
\end{definition}

To define sutured Floer homology, Juh{\'a}sz restricts to a particular class of sutured manifolds.

\begin{definition} \cite[Defn. 2.2]{MR2253454} \label{defn:balancedSM} A sutured manifold $(Y,\Gamma)$ is said to be {\em balanced} if $\chi(R_+) = \chi(R_-)$, and the maps $\pi_0(\Gamma) \rightarrow \pi_0(\partial Y)$ and $\pi_0(\partial Y) \rightarrow \pi_0(Y)$ are surjective.\footnote{The equivalence of this definition to the original definition in \cite{MR2253454} is immediate.}
\end{definition}

There is a corresponding notion for Heegaard diagrams:

\begin{definition} \cite[Defn. 2.1]{MR2253454} \label{defn:balancedHD}A sutured Heegaard diagram $(\Sigma, \boldalpha, \boldbeta)$ is called {\em balanced} if $|\boldalpha| = |\boldbeta|$, $\Sigma$ has no closed components, and $\{\alpha_i\}$ (resp., $\{\beta_i\}$) are linearly-independent in $H_1(\Sigma)$.
\end{definition}

\Juhasz proves, in \cite[Prop. 2.14]{MR2253454}, that every balanced sutured manifold can be specified by means of a balanced Heegaard diagram.  From the data of a balanced Heegaard diagram \[(\Sigma, \boldalpha = \{\alpha_1, \ldots, \alpha_d\}, \boldbeta = \{\beta_1, \ldots, \beta_d\})\] and a generic (family of) complex structures on $\Sigma$, \Juhasz then defines a Floer chain complex in the standard way using the half-dimensional tori $\mathbb{T}_\alpha = \alpha_1 \times \ldots \times \alpha_d$ and $\mathbb{T}_\beta = \beta_1 \times \ldots \times \beta_d$ in $Sym^d(\Sigma)$.  Specifically, one obtains a chain complex with:

\begin{enumerate}
\item Generators: $\{{\bf x} \in \mathbb{T}_\alpha \cap \mathbb{T}_\beta\}$,
\item Differentials: \[\partial({\bf x}) = \sum_{{\bf y} \in \mathbb{T}_\alpha \cap \mathbb{T}_\beta} \sum_{\{\phi \in \pi_2({\bf x},{\bf y})| \mu(\phi) = 1\}} \widehat{\mathcal{M}}(\phi) \cdot {\bf y}.\]
\end{enumerate}

As usual, $\pi_2({\bf x},{\bf y})$ denotes the homotopy classes of disks connecting ${\bf x}$ to ${\bf y}$, $\mu(\phi)$ denotes the Maslov index of a representative of such a homotopy class, and $\widehat{\mathcal{M}}(\phi)$ denotes the moduli space of holomorphic representatives of $\phi$, modulo the standard $\R$ action.  

Denote by $CFH(Y,\Gamma)$ any chain complex associated to a balanced, sutured manifold $(Y,\Gamma)$ arising as above, and by $SFH(Y,\Gamma)$ the homology of such a chain complex.

We will repeatedly encounter the following examples of sutured manifolds:

\begin{example} \label{example:threemanhat}
Let $Y$ be a closed, oriented $3$--manifold, along with a thickening, $D \times I \subset Y$, of an imbedded, oriented disk $D$.  Then $(Y-D, \Gamma)$ will denote the sutured manifold $\overline{Y-(D \times I)}$ with $\Gamma = ((\partial D) \times I) \subset \partial(D \times I)$ and $s(\Gamma) = \partial D \times \{\frac{3}{2}\}$.  Note that \[SFH(Y-D,\Gamma) \cong \widehat{HF}(\pm Y).\]  See \cite[Ex. 2.3]{MR2253454} and the discussion in the proof of Theorem \ref{thm:KnotFloerRel} in the present paper.
\end{example}

\begin{example} \label{example:knot}
Let $K \subset Y$ be an oriented knot in a closed, oriented $3$--manifold.  Then $(Y - K, \Gamma)$ will denote the sutured manifold $Y - N(K)$, where $\Gamma$ is defined as follows.  Choose $\mu$ an imbedded curve on $T^2 = \partial(Y - N(K))$ representing an oriented meridian of $K$, $\mu'$ a parallel, oppositely-oriented copy of $\mu$.  Then $\Gamma = N(\mu) \cup N(\mu')$ and $s(\Gamma) = \mu \cup \mu'$.  Note that \[SFH(Y-K,\Gamma) \cong \widehat{HFK}(Y,K).\]  See \cite[Ex. 2.4]{MR2253454} and the discussion in the proof of Theorem \ref{thm:KnotFloerRel} in the present paper.
\end{example}

\begin{example} \label{example:prodsurface}
Let $F_{g,b}$ be an oriented surface with genus $g$ and $b$ boundary components.  Then let $(F_{g,b} \times I, \Gamma)$ be the sutured manifold with $\Gamma = \partial(F_{g,b}) \times I$ and $s(\Gamma) = \partial(F_{g,b}) \times\{\frac{3}{2}\}$.  We will denote by $D$ the disk, $F_{0,1}$.
\end{example}

For simplicity, whenever we refer to a sutured manifold of the type described in Examples \ref{example:threemanhat} - \ref{example:prodsurface}, we will drop any reference to the sutures in the notation.  E.g., $S^3 - K$ (for $K$ an oriented knot in $S^3$) will be used to denote the sutured manifold $(S^3 - K, \Gamma)$.

We will need the following three operations, which allow us to construct new balanced, sutured manifolds from old.

\begin{definition} (Gluing) \label{defn:gluing} Let $(Y_1,\Gamma_1)$, $(Y_2,\Gamma_2)$ be two sutured manifolds, and $\gamma_i \subset \Gamma_i$ a distinguished connected annular component of $\Gamma_i$ for $i=1,2$.  Then:
\[Y_1 \cup_{\gamma_i} Y_2\] will denote the sutured $3$--manifold
\[Y_1 \amalg_{\gamma_1 \sim -\gamma_2} Y_2\] illustrated in Figure \ref{fig:Gluing}.
\end{definition}

Note that 

\begin{enumerate}
\item $Y_1 \bigcup_{\gamma_i} Y_2$ has sutures $(\Gamma_1 - \gamma_1) \cup (\Gamma_2 - \gamma_2)$, and
\item $R^{\pm}(Y_1 \bigcup_{\gamma_i} Y_2) = R^{\pm}(Y_1) \amalg_{\partial\gamma_1 \sim -\partial\gamma_2} R^{\pm}(Y_2).$ 
\end{enumerate}

\begin{figure}
\begin{center}
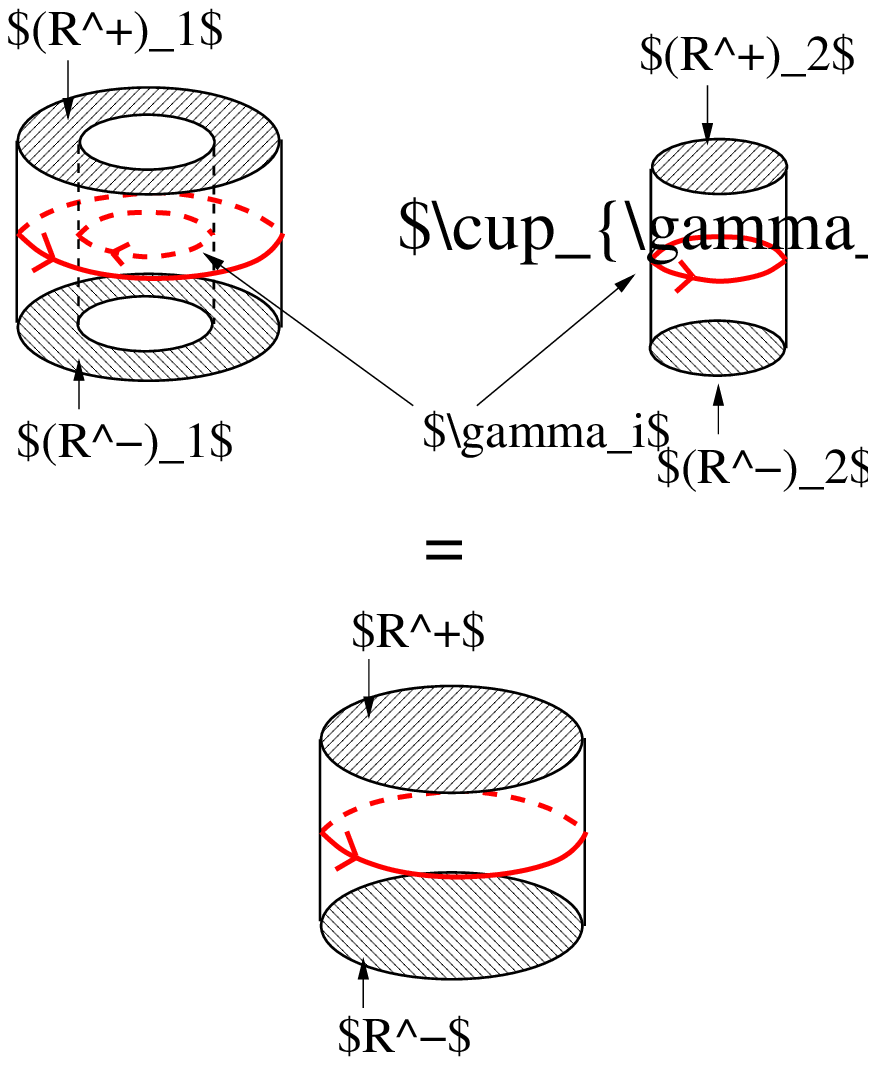
\end{center}
\caption{Two sutured manifolds $(Y_1, \Gamma_1)$ and $(Y_2, \Gamma_2)$ being glued along distinguished sutures $\gamma_i \subset \Gamma_i$ for $i=1,2$.}
\label{fig:Gluing}
\end{figure}

\begin{proposition} \label{prop:BalancedGluing}
Let $(Y_1, \Gamma_1)$ and $(Y_2, \Gamma_2)$ be balanced, sutured manifolds, and $\gamma_i \subset \Gamma_i$ distinguished connected components of $\Gamma_i$ in connected components $S_i \subset \partial(Y_i)$ for $i=1,2$.  If at least one of $\gamma_i$ for $i=1,2$ satisfies the additional property that $(S_i - \overline{\gamma}_i) \cap \Gamma_i \neq \emptyset$, then $Y_1 \cup_{\gamma_i} Y_2$ is balanced.
\end{proposition}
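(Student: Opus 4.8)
The plan is to verify the three conditions of Definition~\ref{defn:balancedSM} for $Y := Y_1 \cup_{\gamma_i} Y_2$: that $\chi(R_+(Y)) = \chi(R_-(Y))$, that $\pi_0(\Gamma)\to\pi_0(\partial Y)$ is surjective, and that $\pi_0(\partial Y)\to\pi_0(Y)$ is surjective. Two of these are easy. For the Euler characteristic condition, the description of $R_{\pm}$ recorded just after Definition~\ref{defn:gluing} shows that $R_{\pm}(Y)$ is obtained from $R_{\pm}(Y_1)\amalg R_{\pm}(Y_2)$ by gluing along circles (namely along $\partial\gamma_1\sim-\partial\gamma_2$); since circles have Euler characteristic zero, $\chi(R_{\pm}(Y)) = \chi(R_{\pm}(Y_1)) + \chi(R_{\pm}(Y_2))$, so $\chi(R_+(Y)) = \chi(R_-(Y))$ follows from balancedness of $Y_1$ and $Y_2$. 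For surjectivity of $\pi_0(\partial Y)\to\pi_0(Y)$, each component of $Y$ is either a component of $Y_1$ or $Y_2$ whose boundary was untouched by the gluing -- hence it meets $\partial Y$ because $Y_1$, $Y_2$ are balanced -- or else it is the single component obtained by fusing the component of $Y_1$ containing $\gamma_1$ with the component of $Y_2$ containing $\gamma_2$; this fused component contains the nonempty surface $S_1 - \mathrm{Int}(\gamma_1)\subseteq\partial Y$, so it too meets $\partial Y$.

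The real content is the surjectivity of $\pi_0(\Gamma)\to\pi_0(\partial Y)$, i.e.\ that every component of $\partial Y$ contains a suture of $\Gamma = (\Gamma_1 - \gamma_1)\cup(\Gamma_2 - \gamma_2)$. A component of $\partial Y$ disjoint from $S_1$ and $S_2$ is a component of $\partial Y_1$ or $\partial Y_2$ other than $S_1$, $S_2$, and by balancedness of $Y_1$, $Y_2$ it contains a suture of $\Gamma_1$ or $\Gamma_2$, necessarily distinct from $\gamma_1$, $\gamma_2$ and hence surviving in $\Gamma$. So the problem reduces to the component(s) of $\partial Y$ assembled from the pieces of $S_1 - \mathrm{Int}(\gamma_1)$ and $S_2 - \mathrm{Int}(\gamma_2)$, which are glued to one another along $\partial\gamma_1 \sim -\partial\gamma_2$.

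The main obstacle, I expect, will be the following topological fact about a single sutured manifold $(Y_j,\Gamma_j)$: \emph{if an annular component $\gamma$ of $\Gamma_j$ separates the component $S$ of $\partial Y_j$ containing it, then each of the two components of $S - \mathrm{Int}(\gamma)$ contains a component of $\Gamma_j$ distinct from $\gamma$.} I would prove this by an orientation computation against the compatibility condition in Definition~\ref{defn:suturedman}. First, that condition forces $\gamma$ to border $R_+(\Gamma_j)$ along one of its two boundary circles $c$, $c'$ and $R_-(\Gamma_j)$ along the other: if instead both $c$ and $c'$ bordered $R_+(\Gamma_j)$, then, using that $c$ and $c'$ receive opposite boundary orientations from the annulus $\gamma$ but also opposite boundary orientations as boundary circles of the adjacent $R_+$-regions, one finds in $H_1(\gamma)\cong\mathbb{Z}$ that the class of $c$, oriented as a boundary circle of $R_+$, is the negative of the class of $c'$, similarly oriented; yet the compatibility condition forces each of these to equal the class of the (unique) suture of $\gamma$ -- impossible, as that class is nonzero. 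Next, if one of the two components $P$ of $S - \mathrm{Int}(\gamma)$ contained no suture, then $P$ would be an entire component of $R_+(\Gamma_j)$ or $R_-(\Gamma_j)$ whose only boundary circle is one boundary circle of $\gamma$, and the analogous computation -- now comparing the $R_+$-side and $R_-$-side boundary circles of $\gamma$ -- would force the suture class of $\gamma$ to equal its own negative in $H_1(\gamma)\cong\mathbb{Z}$, again a contradiction.

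Granting this fact, I would finish with a short case analysis on whether $S_1 - \mathrm{Int}(\gamma_1)$ and $S_2 - \mathrm{Int}(\gamma_2)$ are connected; each is nonempty with boundary $\partial\gamma_j$, so each is either connected or splits into two pieces bounded by the two circles of $\partial\gamma_j$. If at least one of the two surfaces is connected, all the pieces glue into a single new component of $\partial Y$, and this component meets $\Gamma$ precisely because the hypothesis guarantees $(S_i - \overline{\gamma}_i)\cap\Gamma_i \neq\emptyset$ for some $i$. If both are disconnected, then $\gamma_1$ separates $S_1$, so by the topological fact each of the two components of $S_1 - \mathrm{Int}(\gamma_1)$ contains a suture (distinct from $\gamma_1$, hence surviving in $\Gamma$); since one boundary circle of $\gamma_1$ lies in each of these two components, the gluing joins them to the two components of $S_2 - \mathrm{Int}(\gamma_2)$ so that each resulting new component of $\partial Y$ contains one of them and hence a suture. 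Thus in every case each component of $\partial Y$ contains a suture, completing the verification that $Y$ is balanced.
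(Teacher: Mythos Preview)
Your Euler-characteristic and $\pi_0(\partial Y)\to\pi_0(Y)$ arguments are fine and match the paper's (terser) treatment. The problem is your ``topological fact'': it is false. Take $S=S^2$ with $\gamma$ an equatorial annulus; then $S-\mathrm{Int}(\gamma)$ is two disks, one in $R_+$ and one in $R_-$, and neither contains another suture. Your attempted proof goes wrong at the ``analogous computation''. The subtlety is that $R_+$ and $R_-$ carry \emph{opposite} orientations relative to $\partial Y$ (Definition~\ref{defn:suturedman}): if $c$ is the $R_+$-side boundary circle of $\gamma$ and $c'$ the $R_-$-side, then $[c]$ with its $\partial R_+$-orientation and $[c']$ with its $\partial R_-$-orientation pick up one sign flip from the $R_\pm$ orientation reversal and another from lying on opposite sides of the annulus. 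These cancel, so both classes equal $[s(\gamma)]$ consistently---no contradiction.

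This leaves your Case~2 unproved, and in fact the statement fails there. Glue $(Y_2,\Gamma_2)=(B^3,\Gamma_2)$ with a single equatorial suture to a balanced $(Y_1,\Gamma_1)$ in which $\gamma_1$ separates $S_1$ with one side $Q_1$ suture-free and the other side carrying an extra suture (so the hypothesis holds for $i=1$); one can arrange this with, say, $\partial Y_1=S^2\sqcup S^2$ and a compensating suture pattern on the second sphere to keep $\chi(R_+)=\chi(R_-)$. After gluing, $Q_1$ caps off with a disk from $\partial B^3$ to form a sphere component of $\partial Y$ with no suture. The paper's proof asserts the $\pi_0$ surjectivity in one line without a case analysis; fortunately, in every application (Proposition~\ref{prop:GlueSutMan}) the distinguished $\gamma_1$ is a meridional annulus on a torus, hence non-separating, so only your (correct) Case~1 is ever needed.
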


\begin{proof}
It is clear that $Y_1 \cup_{\gamma_i} Y_2$ has no closed components if $Y_1$ and $Y_2$ don't, since we are only gluing along a proper subset of $S_i$ for $i=1,2$.  Furthermore, the condition that $(S_i - \overline{\gamma}_i) \cap \Gamma_i \neq \emptyset$ for at least one of $i=1,2$ ensures that 
\[\pi_0(\Gamma(Y_1 \cup_{\gamma_i} Y_2)) \rightarrow \pi_0(\partial(Y_1 \cup_{\gamma_i} Y_2))\] is surjective.

The additivity of the Euler characteristic for surfaces glued along circular boundary components insures that 

\[\left(\chi(R_{+}(Y_i)) = \chi(R_{-}(Y_i)\right) \Longrightarrow \left(\chi(R_{+}(Y_1 \cup_{\gamma_i} Y_2)) = \chi(R_-(Y_1 \cup_{\gamma_i} Y_2))\right)\]
\end{proof}

There is a nice interpretation of the gluing operation in terms of their associated Heegaard diagrams.

\begin{lemma} \label{lemma:GlueHD}
If $(\Sigma, \boldalpha, \boldbeta)_i$ for $i=1,2$ are balanced sutured Heegaard diagrams representing the balanced sutured manifolds $(Y,\Gamma)_i$, and $\gamma_i \subset \Gamma_i$ satisfy the conditions in Proposition \ref{prop:BalancedGluing}, then \[\left(\Sigma_1 \,\amalg_{\gamma_1 \sim -\gamma_2}\, \Sigma_2\,,\, \boldalpha_1 \cup \boldalpha_2\,,\, \boldbeta_1 \cup \boldbeta_2\right)\] is a balanced sutured Heegaard diagram representing $Y_1 \cup_{\gamma_i} Y_2$. 
\end{lemma}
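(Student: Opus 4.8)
The plan is to verify directly that the three defining properties of a balanced sutured Heegaard diagram (Definition~\ref{defn:balancedHD}) hold for the glued tuple, and then to check that the sutured manifold it specifies is exactly $Y_1 \cup_{\gamma_i} Y_2$ by chasing through the handle-attachment recipe of Definition~\ref{defn:suturedHD}. First I would note that gluing the two surfaces $\Sigma_1$, $\Sigma_2$ along the arc(s) of $\partial\Sigma_i$ corresponding to $\gamma_i$ produces a compact oriented surface $\Sigma_1 \amalg_{\gamma_1 \sim -\gamma_2} \Sigma_2$ with boundary: here one uses that $\gamma_i$, being an annular component of $\Gamma_i$, corresponds under Definition~\ref{defn:suturedHD} to a component of $\partial\Sigma_i \times I$, i.e. to a boundary circle of $\Sigma_i$ (or arc thereof, after the $\times I$ bookkeeping), and the orientation-reversing identification $\gamma_1 \sim -\gamma_2$ is exactly what is needed for the orientations of $\Sigma_1$ and $\Sigma_2$ to match up along the glued locus. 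Since both $\Sigma_i$ have no closed components and the gluing is along a proper subset of the boundary (by the hypothesis $(S_i - \overline\gamma_i)\cap\Gamma_i \neq \emptyset$ for one $i$, inherited from Proposition~\ref{prop:BalancedGluing}), the glued surface has no closed components either.

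Next I would check the count and linear-independence conditions on the curves. The attaching curves are $\boldalpha_1 \cup \boldalpha_2$ and $\boldbeta_1 \cup \boldbeta_2$, all lying in the interiors of the respective $\Sigma_i$ and hence disjoint from one another across the glued seam; so $|\boldalpha_1 \cup \boldalpha_2| = |\boldalpha_1| + |\boldalpha_2| = |\boldbeta_1| + |\boldbeta_2| = |\boldbeta_1 \cup \boldbeta_2|$ using balancedness of each $(\Sigma,\boldalpha,\boldbeta)_i$. For linear independence in $H_1$ of the glued surface, I would use a Mayer--Vietoris argument: because we glue along arcs (a contractible piece, after accounting for the interval direction) rather than along closed surfaces, $H_1$ of the glued surface receives $H_1(\Sigma_1) \oplus H_1(\Sigma_2)$, and the classes $[\alpha^{(1)}_j]$, $[\alpha^{(2)}_k]$ map to the two summands; since each family was independent in its own $H_1(\Sigma_i)$, the union is independent in the direct sum. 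The same argument applies to the $\boldbeta$'s.

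Finally I would identify the resulting sutured manifold. Attaching $2$-handles to $(\Sigma_1 \amalg \Sigma_2) \times I$ along $(\boldalpha_1 \cup \boldalpha_2)\times\{-1\}$ and $(\boldbeta_1 \cup \boldbeta_2)\times\{4\}$ can be done one side at a time, and since the $\boldalpha_i$, $\boldbeta_i$ are supported away from the seam, the handle attachments on the $\Sigma_1$-part and the $\Sigma_2$-part are independent; what results is the union of the handlebody-thickenings built from each $(\Sigma,\boldalpha,\boldbeta)_i$, glued along the portion $\gamma_1\times I$ of $\partial\Sigma_1 \times I$ identified with $-\gamma_2 \times I$. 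By Definition~\ref{defn:gluing} this is precisely $Y_1 \cup_{\gamma_i} Y_2$, and tracking $\Gamma$ and $s(\Gamma)$ through this identification reproduces the sutures $(\Gamma_1 - \gamma_1)\cup(\Gamma_2 - \gamma_2)$ recorded after Definition~\ref{defn:gluing}, with $s(\Gamma)$ sitting at the $\{3/2\}$ level as required. The one point needing care --- and the step I expect to be the main obstacle --- is the bookkeeping that translates "glue the annular suture component $\gamma_i$" on the $3$-manifold side into the correct "glue this boundary arc of $\Sigma_i$, with this orientation" on the Heegaard-diagram side, making sure the orientation conventions of Definition~\ref{defn:suturedHD} (which $\alpha$'s go at level $-1$, which $\beta$'s at level $4$, where $R_+$ versus $R_-$ sits) are consistent with the $\gamma_1 \sim -\gamma_2$ convention so that the glued diagram genuinely builds $Y_1 \cup_{\gamma_i} Y_2$ rather than some mismatched variant; once the orientation-reversal is seen to be exactly what makes the two local pictures agree, the rest is routine.
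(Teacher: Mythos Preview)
The paper's own proof is the single line ``Immediate from the definitions,'' so there is nothing to compare approaches against; you are supplying the details the paper omits, and your treatment of the conditions ``no closed components,'' ``$|\boldalpha|=|\boldbeta|$,'' and ``represents the glued manifold'' is fine in outline.

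The linear-independence step, however, contains a genuine error. You claim that the Heegaard surfaces are glued ``along arcs (a contractible piece, after accounting for the interval direction),'' but each annular suture $\gamma_i\subset\Gamma_i=\partial\Sigma_i\times I$ corresponds to a full boundary \emph{circle} $c_i$ of $\Sigma_i$, and $\Sigma_1\amalg_{\gamma_1\sim-\gamma_2}\Sigma_2$ identifies these circles. The relevant Mayer--Vietoris sequence is therefore $H_1(S^1)\to H_1(\Sigma_1)\oplus H_1(\Sigma_2)\to H_1(\Sigma)$, and the second map has kernel generated by $([c_1],[c_2])$, which is almost never zero; so your injectivity claim fails. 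Worse, linear independence of the combined curve systems can genuinely fail under the stated hypotheses. Take $\Sigma_1$ a once-punctured torus with $\boldalpha_1,\boldbeta_1$ the two standard generating curves, and $\Sigma_2$ a twice-punctured torus with $\alpha^{(2)}_1$ parallel to the boundary circle $c_2$ and $\beta^{(2)}_1$ a nonseparating genus curve. Both diagrams are balanced, and one checks that $\partial Y_2$ is a single torus carrying both sutures, so the hypothesis of Proposition~\ref{prop:BalancedGluing} holds for $i=2$; yet after gluing, the seam $c$ is separating in $\Sigma$ and bounds the $\Sigma_1$ side, whence $[\alpha^{(2)}_1]=[c]=0$ in $H_1(\Sigma)$. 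Neither your argument nor the paper's one-line proof addresses this. In the paper's actual applications (the proof of Theorem~\ref{thm:KnotFloerRel}) the issue does not arise, since one of the two diagrams has $\boldalpha=\boldbeta=\emptyset$ and the other is obtained by deleting two basepoint disks from a closed Heegaard diagram, and in that situation the combined curves do remain independent.
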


\begin{proof} Immediate from the definitions.
\end{proof}

\begin{definition} (Branched Covering)
Let $(Y,\Gamma)$ be a sutured manifold and \[(B,\partial B) \subset (Y, \partial Y)\] a smoothly imbedded, codimension $2$ submanifold satisfying 
\[\partial B \cap \Gamma = \emptyset.\]
Let $\widetilde{Y}$ be any cyclic branched cover of $Y$ over $B$ with covering map $\pi: \widetilde{Y} \rightarrow Y$.

Then we denote by $(\widetilde{Y}, \widetilde{\Gamma})$ the sutured manifold with sutures $s(\widetilde{\Gamma}) = \pi^{-1}(s(\Gamma))$.

Of special interest to us is the sutured branched double cover, which we will denote by $\boldSigma((Y,\Gamma),B)$.
\end{definition}

\begin{proposition} \label{prop:BalancedCovers}
Let $(Y,\Gamma)$ be a balanced, sutured manifold and $(B,\partial B) \subset (Y, \partial Y)$ a smoothly imbedded codimension $2$ submanifold satisfying 
\[\partial B \cap \Gamma = \emptyset.\]
Let $\widetilde{Y}$ be any cyclic branched cover of $(Y,B)$ with covering map $\pi: \widetilde{Y} \rightarrow Y$.
If \[\#(\partial B \cap R_+) = \#(\partial B \cap R_-),\]
then $(\widetilde{Y}, \widetilde{\Gamma})$ is balanced.
\end{proposition}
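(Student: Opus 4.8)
The plan is to check the three conditions in Definition~\ref{defn:balancedSM} for $(\widetilde Y,\widetilde\Gamma)$: surjectivity of $\pi_0(\widetilde\Gamma)\to\pi_0(\partial\widetilde Y)$ and of $\pi_0(\partial\widetilde Y)\to\pi_0(\widetilde Y)$, and the Euler characteristic balance $\chi(R_+(\widetilde\Gamma))=\chi(R_-(\widetilde\Gamma))$. First I would set up the relevant structure. Give $\widetilde Y$ the orientation pulled back from $Y$ (this is unambiguous since the branch locus $B$ has codimension $2$), so that $\pi$ is orientation-preserving and carries outward normals along $\partial\widetilde Y$ to outward normals along $\partial Y$. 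Since $\partial B\cap\Gamma=\emptyset$, the restriction $\pi\colon\widetilde\Gamma=\pi^{-1}(\Gamma)\to\Gamma$ is an honest (unbranched) covering, so each component of $\widetilde\Gamma$ covers an annular or toral component of $\Gamma$ and is hence itself an annulus carrying a suture (a component of $\pi^{-1}(s(\Gamma))$) or a torus; thus $(\widetilde Y,\widetilde\Gamma)$ is genuinely a sutured manifold, the homological compatibility of $R_\pm(\widetilde\Gamma)$ with $s(\widetilde\Gamma)$ being pulled back along $\pi$ from the corresponding condition for $(Y,\Gamma)$. Moreover $R(\widetilde\Gamma)=\pi^{-1}(R(\Gamma))$, and because $\pi$ respects outward normals we get $R_\pm(\widetilde\Gamma)=\pi^{-1}(R_\pm)$; finally $\partial B$, being disjoint from $\Gamma\supseteq\partial R(\Gamma)$, is a finite set of points lying in the interiors of $R_+$ and $R_-$, and these are precisely the branch points of the two surface branched covers $\pi\colon R_\pm(\widetilde\Gamma)\to R_\pm$.

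The two $\pi_0$ conditions are then routine. A finite branched covering map is open and proper, so it sends each connected component of its source onto a single connected component of its target, and does so surjectively. Hence, given a component $\widetilde C$ of $\partial\widetilde Y$, its image is a component $C$ of $\partial Y$, which contains a component $\gamma$ of $\Gamma$ because $(Y,\Gamma)$ is balanced; then $\emptyset\neq\pi^{-1}(\gamma)\cap\widetilde C\subseteq\widetilde\Gamma$, which gives the first surjectivity. Similarly, each component $\widetilde W$ of $\widetilde Y$ maps onto a component $W$ of $Y$, which has nonempty boundary since $(Y,\Gamma)$ is balanced, so $\emptyset\neq\pi^{-1}(\partial W)\cap\widetilde W\subseteq\partial\widetilde Y$, giving the second.

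The real content is the Euler characteristic balance, and I expect this to be the only place requiring care. Let $d=\deg\pi$. Restricting $\pi$ over the complement of the branch points gives an honest $d$-fold cover of $R_\pm\setminus(\partial B\cap R_\pm)$; by multiplicativity of the Euler characteristic under finite covers, and then reattaching the preimages of the branch points, one gets
\[
\chi\bigl(R_\pm(\widetilde\Gamma)\bigr)\;=\;d\,\chi(R_\pm)\;-\;\sum_{p\in\partial B\cap R_\pm}\bigl(d-\#\pi^{-1}(p)\bigr).
\]
For a cyclic cover, $\#\pi^{-1}(p)$ equals $d$ divided by the order of the monodromy around a meridian of the component of $B$ through $p$, and in the sutured branched double cover relevant to our applications this order is $2$ for every component of $B$, so $\#\pi^{-1}(p)=d/2=1$ for every branch point $p$ (more generally it suffices that the monodromy have a common order $k$ about every component of $B$, which is automatic when $d$ is prime). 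Subtracting the two instances of the displayed formula and invoking $\chi(R_+)=\chi(R_-)$ together with the hypothesis $\#(\partial B\cap R_+)=\#(\partial B\cap R_-)$ then yields $\chi(R_+(\widetilde\Gamma))=\chi(R_-(\widetilde\Gamma))$. The main obstacle, such as it is, is thus the bookkeeping above: pinning down the orientation conventions so that $R_\pm(\widetilde\Gamma)=\pi^{-1}(R_\pm)$, and checking that every branch point has the same preimage count, so that the imbalance hypothesis can be applied term by term.
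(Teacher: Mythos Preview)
Your proof follows the same architecture as the paper's: verify the two $\pi_0$ surjectivity conditions by lifting arguments, and verify $\chi(R_+(\widetilde\Gamma))=\chi(R_-(\widetilde\Gamma))$ via Riemann--Hurwitz. The paper uses explicit path-lifting for the $\pi_0$ conditions where you use the open-and-proper characterization of branched covers; these are equivalent.

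The one place you diverge is the Euler-characteristic step, and there your argument is left incomplete for the statement as written. You correctly derive
\[
\chi\bigl(R_\pm(\widetilde\Gamma)\bigr)=d\,\chi(R_\pm)-\sum_{p\in\partial B\cap R_\pm}\bigl(d-\#\pi^{-1}(p)\bigr),
\]
but then restrict to the double cover (or prime degree) to force $\#\pi^{-1}(p)$ constant. The paper instead simply writes $\chi(\widetilde R_\pm)=k\,\chi(R_\pm)-(k-1)n$, i.e.\ it takes $\#\pi^{-1}(p)=1$ for every $p\in\partial B$. This is not an extra hypothesis: in the paper's (and the standard low-dimensional) convention, a \emph{cyclic branched cover of $(Y,B)$} means the local model near every component of $B$ is $z\mapsto z^k$, so the monodromy about each meridian of $B$ is a generator of $\Z/k\Z$ and every branch point has a single preimage. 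With that convention your displayed formula reduces to the paper's, the correction term becomes $(k-1)\cdot\#(\partial B\cap R_\pm)$, and the hypothesis $\#(\partial B\cap R_+)=\#(\partial B\cap R_-)$ finishes the argument for all $k$, not just prime $k$. So your only gap is not invoking this convention; once you do, your proof is essentially the paper's.
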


In the above, $\#$ denotes geometric, not algebraic, intersection number.

\begin{proof} 

To show that $\chi(\widetilde{R}_+) = \chi(\widetilde{R}_-)$, note that the branched covering restricts to a branched covering of the boundary over $\partial B \subset \partial Y$.  Let $n= \#(\partial B \cap R_{\pm})$ and $k$ be the order of the covering $\widetilde{Y} \rightarrow Y$.  By the Riemann-Hurwitz formula,
\[\chi(\widetilde{R}_{\pm}) = k(\chi(R_{\pm})) - (k1)n.\]
Since $Y$ is balanced, $\chi(R_+) = \chi(R_-)$, which implies $\chi(\widetilde{R}_+) = \chi(\widetilde{R}_-)$, as desired.

To show that $\pi_0(\widetilde{\Gamma}) \rightarrow \pi_0(\partial\widetilde{Y})$ is surjective, note that the surjectivity of $\pi_0(\Gamma) \rightarrow \pi_0(\partial Y)$ implies that for every point $p \in R_+ \cup R_-$ there exists some point $q \in s(\Gamma)$ and a path $\eta_t: [0,1] \rightarrow \partial Y$ from $p$ to $q$.

Now, suppose that there is some connected component $Y_0$ of $\partial \widetilde{Y}$ satisfying $Y_0 \cap \widetilde{\Gamma} = \emptyset$.  Then either $Y_0 \subset R_+$ or $Y_0 \subset R_-$.  For definiteness, assume the former.  Pick a point $\widetilde{p} \in Y_0$ and consider its projection, $p \in Y$.  As noted above, there exists a path $\eta_t$ from $p$ to some $q \in s(\Gamma)$.  By the path lifting property (cf. \cite{MR1867354}), $\eta_t$ lifts to a path $\widetilde{\eta}_t$ from $\widetilde{p}$ to $\widetilde{q} \in s(\widetilde{\Gamma})$, implying that, in fact, $Y_0 \cap \widetilde{\Gamma} \neq \emptyset$, as desired.

An analogous argument proves that $\pi_0(\partial\widetilde{Y}) \rightarrow \pi_0(\widetilde{Y})$ is surjective.
\end{proof}

We will also need to understand the behavior of sutured Floer homology under so-called surface decompositions:

\begin{definition} \cite[Defn. 2.4]{MR2390347}  Let $(Y, \Gamma)$ be a sutured manifold.  A {\em decomposing surface} is a properly imbedded oriented surface $S$ in $Y$ such that for every component $\lambda$ of $S \cap \Gamma$, one of the following holds:
\begin{enumerate}
  \item $\lambda$ is a properly imbedded non-separating arc in $\Gamma$ with $\#(\lambda \cap s(\Gamma)) = 1$.
  \item $\lambda$ is a simple closed curve in an annular component $A$ of $\Gamma$ in the same homology class as $s(\Gamma)$.
  \item $\lambda$ is a homotopically nontrivial curve in a torus component $T$ of $\Gamma$, and if $\delta$ is another component of $T \cap S$, $[\lambda] = [\delta] \in H_1(T)$.
\end{enumerate}
\end{definition}

\begin{definition} If $S$ is a {\em decomposing surface} in the sutured manifold $(Y,\Gamma)$, $S$ defines a {\em sutured manifold decomposition} 
\[(Y, \Gamma) \rightsquigarrow^{S} (Y',\Gamma'),\]
where $Y' = Y - \mbox{Int}(N(S))$ and

\begin{eqnarray*}
\Gamma' &=& (\Gamma \cap Y') \cup N(S_+' \cap R_-(\Gamma)) \cup N(S'_- \cap R_+(\Gamma)),\\
R_+(\Gamma') &=& ((R_+(\Gamma) \cap Y') \cup S_+') - \mbox{Int}(\Gamma'),\\
R_-(\Gamma') &=& ((R_-(\Gamma) \cap Y') \cup S_-') - \mbox{Int}(\Gamma').
\end{eqnarray*}

Here, $S'_+$ (resp., $S_-'$) is the component of $\partial N(S) \cap Y'$ whose normal vector points out of (resp., into) $Y'$.
\end{definition}

We refer the reader to \cite{MR2390347} for the remaining definitions and results about the behavior of sutured Floer homology under surface decompositions.  In particular, Theorem 1.3, Definition 4.3, and Lemmas 4.5 and 5.4 of \cite{MR2390347} will be indispensable to us in the proof of Theorem \ref{thm:KnotFloerRel}.

We close this section with a proof of the following algebro-topological fact, implicit in \cite[Sec. 3]{MR2253454}.  Compare also \cite[Sec. 2.5]{MR2113019}.

\begin{proposition}\label{prop:H1andH2}
Let $(\Sigma,\boldalpha,\boldbeta)$ be a balanced sutured Heegaard diagram for $(Y,\Gamma)$.  Then there is a natural identification
\[H_2(Y;\Z) \cong Ker\left(Span([\boldalpha,\boldbeta]) \rightarrow H_1(\Sigma;\Z)\right)\]
\end{proposition}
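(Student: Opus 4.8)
The plan is to realize $Y$ as a space homotopy equivalent to a $2$--dimensional CW complex and to extract $H_2$ from the long exact sequence of a pair. By Definition \ref{defn:suturedHD}, $Y$ is obtained from $\Sigma \times I$ by attaching $3$--dimensional $2$--handles along the curves $\alpha_i \times \{-1\}$ and $\beta_j \times \{4\}$, whose $2d$ attaching regions are pairwise disjoint and lie on $\partial(\Sigma \times I)$. First I would invoke the standard fact that a manifold obtained from $A$ by attaching a $k$--handle deformation retracts onto $A$ with a single $k$--cell glued on along the core of the attaching map. Applying this to each of the $2$--handles and then deformation retracting $\Sigma \times I$ onto $\Sigma \times \{0\} \cong \Sigma$, dragging the attaching circles along, yields a homotopy equivalence $Y \simeq X$, where $X$ is obtained from $\Sigma$ by attaching one $2$--cell $e_i$ along $\alpha_i$ for each $i$ and one $2$--cell $f_j$ along $\beta_j$ for each $j$. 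In particular $H_\ast(Y;\Z) \cong H_\ast(X;\Z)$.

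Next I would compute $H_2(X)$ using the long exact sequence of the CW pair $(X,\Sigma)$. The pair $(X,\Sigma)$ has exactly the $2d$ relative $2$--cells $e_1,\dots,e_d,f_1,\dots,f_d$ and no relative cells in any other dimension, so $H_2(X,\Sigma;\Z)$ is the free abelian group on these $2d$ cells, which is canonically $Span([\boldalpha,\boldbeta])$, and $H_k(X,\Sigma;\Z) = 0$ for $k \neq 2$. Since the diagram is balanced, $\Sigma$ has no closed components, so $H_2(\Sigma;\Z) = 0$, and the relevant portion of the long exact sequence reads
\[
0 \longrightarrow H_2(X;\Z) \xrightarrow{j_\ast} H_2(X,\Sigma;\Z) \xrightarrow{\partial} H_1(\Sigma;\Z),
\]
so $j_\ast$ identifies $H_2(X;\Z)$ with $Ker(\partial)$.

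Finally I would identify $\partial$ with the map in the statement. The relative class of the $2$--cell $e_i$ is a relative cycle whose image under the connecting homomorphism is the class $[\alpha_i] \in H_1(\Sigma;\Z)$ of its attaching circle, and likewise $f_j \mapsto [\beta_j]$; hence, under the identification $H_2(X,\Sigma;\Z) \cong Span([\boldalpha,\boldbeta])$, the map $\partial$ is exactly the homomorphism $Span([\boldalpha,\boldbeta]) \to H_1(\Sigma;\Z)$ carrying each curve to its homology class. Combining the three steps gives $H_2(Y;\Z) \cong H_2(X;\Z) \cong Ker\big(Span([\boldalpha,\boldbeta]) \to H_1(\Sigma;\Z)\big)$, and the identification is natural because each ingredient --- the handle-to-cell deformation retraction, the long exact sequence of $(X,\Sigma)$, and the formula for $\partial$ --- is canonical. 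The step requiring the most care, and the main obstacle to a clean write-up, is the first one: checking that attaching the $3$--dimensional $2$--handles and then retracting $\Sigma \times I$ produces, with no interaction among the handles, the $2$--complex $X$ having every cell glued to the single surface $\Sigma$. Once this homotopy equivalence is in hand, the rest is a routine diagram chase; note that, beyond the absence of closed components in $\Sigma$, no further part of the balanced hypothesis is needed.
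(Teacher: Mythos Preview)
Your argument is correct and takes a somewhat different route from the paper's. The paper splits $Y$ into the two compression bodies $U_\alpha$ and $U_\beta$ glued along $\Sigma$ and applies Mayer--Vietoris; since $H_2(U_\alpha) = H_2(U_\beta) = 0$, one obtains $H_2(Y) \cong \ker\bigl(H_1(\Sigma) \to H_1(U_\alpha)\oplus H_1(U_\beta)\bigr)$, and then identifies this intersection $\Span([\boldalpha])\cap\Span([\boldbeta])\subset H_1(\Sigma)$ with the stated kernel using the linear independence of each family of curves. Your approach instead replaces $Y$ by a $2$--complex $X$ and reads off $H_2$ from the long exact sequence of the pair $(X,\Sigma)$, which delivers the map $\Span([\boldalpha,\boldbeta]) \to H_1(\Sigma)$ directly as the cellular boundary, with no extra identification step needed at the end. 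Your strategy is in fact the one the paper itself adopts later for the multi-diagram analogue (Proposition~\ref{prop:PerDomains}), via the long exact sequence of the pair $(X, P_{n+1}\times\Sigma)$ and excision; the Mayer--Vietoris argument used here trades that for not having to justify the handle-to-cell deformation retraction.
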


\begin{proof} Use Mayer-Vietoris on \[Y = U_\alpha \bigcup_{\Sigma \subset \partial(U_\alpha) \sim -\partial(U_\beta)} U_\beta,\] where $U_\alpha := f^{-1}[-1,\frac{3}{2}]$ (resp., $U_\beta := f^{-1}[\frac{3}{2},4]$) for $f$ a self-indexing Morse function as in \cite[Prop. 2.13]{MR2253454}:
\[\begin{CD}
H_2(U_\alpha) \oplus H_2(U_\beta) @>>> H_2(Y) @>>> H_1(\Sigma) @>f>> H_1(U_\alpha) \oplus H_1(U_\beta).
\end{CD}\]
Since $H_2(U_\alpha) \cong H_2(U_\beta) = 0$ (using, e.g., the long exact sequence on the pair $(U_\alpha, \Sigma)$ or $(U_\beta,\Sigma)$), $H_2(Y) \cong Ker(f).$  But $Ker(f)$ consists of those elements of $H_1(\Sigma)$ which map to $0$ in both $H_1(U_\alpha)$ and $H_1(U_\beta)$ under the inclusion maps.  This is precisely \[Ker(Span([\boldalpha,\boldbeta]) \rightarrow H_1(\Sigma)),\] as desired.
\end{proof}

\subsection{Maslov Index}
Let $(\Sigma, \boldalpha,\boldbeta)$ be a balanced, sutured Heegaard diagram and ${\bf x} = (x_1, \ldots, x_d)$, ${\bf y} = (y_1, \ldots, y_d)$ two intersection points in $\mathbb{T}_{\alpha} \cap \mathbb{T}_{\beta} \subset Sym^d(\Sigma)$.  The purpose of this section is to review the arguments behind Lipshitz's formula \cite{MR2240908} for the Maslov index (formal expected dimension) of the moduli space of holomorphic representatives of $\phi \in \pi_2({\bf x},{\bf y})$, verifying that they are valid in the context of sutured Floer homology.  We will need this formula in order to understand the grading shifts discussed in Section \ref{sec:Gradings}.

Before stating Lipshitz's formula, we need a couple of definitions (from \cite{MR2240908}).  In what follows, let $D$ be a positive domain in $\Sigma$ representing $\phi \in \pi_2({\bf x},{\bf y})$.

\begin{definition}  Let $e(D)$ denote the {\it Euler measure} of $D$.  The Euler measure is additive under disjoint union and gluing components along boundaries (cf. \cite{MR2240908}).  Expressing $D$ as a $\mathbb{Z}_{\geq 0}$-linear combination of the connected components $\mathcal{D}_1, \ldots \mathcal{D}_n$ of $\Sigma - \vec{\alpha} - \vec{\beta}$, $e(D)$ is given by:\[e(D) = \sum_i e(\mathcal{D}_i),\]
where \[e(\mathcal{D}_i) = \chi(\mathcal{D}_i) - \frac{k(\mathcal{D}_i)}{4} + \frac{\ell(\mathcal{D}_i)}{4},\]
and $k$ (resp., $\ell$) is the number of acute (resp., obtuse) right-angled corners of $\mathcal{D}_i$.  $\chi(\mathcal{D}_i)$ is the Euler characteristic.
\end{definition}

\begin{definition}
\[n_{\bf x}(D) := \sum_{i=1}^d n_{x_i}(D),\] where $n_{x_i}(D)$ is the average of the coefficients of $D$ in the four domains adjacent to $x_i$.  In other words, if one chooses points $z_I, z_{II}, z_{III}, z_{IV}$ in the four domains adjacent to $x_i$, then
\[n_{x_i}(D) = \frac{1}{4}\left(n_{z_I}(D) + n_{z_{II}}(D) + n_{z_{III}}(D) + n_{z_{IV}}(D)\right).\]

\end{definition}

\begin{proposition} \cite[Cor. 4.3]{MR2240908} \label{prop:MasIndex}
Let $D$ be a positive domain in $\Sigma$ representing $\phi \in \pi_2({\bf x},{\bf y})$.  Then 
\[\mu(\phi) = e(D) + n_{\bf x}(D) + n_{\bf y}(D).\]
\end{proposition}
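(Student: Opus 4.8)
The strategy is to recall the structure of Lipshitz's proof of \cite[Cor.~4.3]{MR2240908} and to observe that each of its steps is \emph{local} along a holomorphic curve, hence insensitive to the replacement of a closed Heegaard surface by a sutured one. Recall first that, via the tautological correspondence, $\widehat{\mathcal M}(\phi)$ (or rather the unstabilized moduli space $\mathcal M(\phi)$) may be identified with a moduli space of holomorphic maps
\[ u \colon (S,\partial S) \longrightarrow \bigl(\Sigma \times [0,1] \times \R,\ (\boldalpha \times \{1\} \cup \boldbeta \times \{0\}) \times \R\bigr), \]
where $S$ is a Riemann surface with boundary and punctures, $u$ has the prescribed asymptotics over $\mathbf x$ and $\mathbf y$, and $\pi_\Sigma \circ u$ represents the domain $D$. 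Since the $\boldalpha$- and $\boldbeta$-curves lie in $\mathrm{Int}(\Sigma)$, the boundary of any domain representing $\phi$ is disjoint from a collar of $\partial\Sigma$; and—exactly as the condition $n_z(\phi)=0$ functions in the closed case—by the conventions of \cite{MR2253454} the classes in $\pi_2(\mathbf x,\mathbf y)$ are those whose representative domains moreover have vanishing multiplicity on every component of $\Sigma\setminus(\boldalpha\cup\boldbeta)$ meeting $\partial\Sigma$. Thus $\overline D$ (which is automatically compact, as $\Sigma$ is compact) is contained in a compact subsurface $\Sigma_0\subset\mathrm{Int}(\Sigma)$ disjoint from a collar of $\partial\Sigma$.

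The first step is then to check that every holomorphic representative $u$ of $\phi$ has image contained in $\Sigma_0\times[0,1]\times\R$. One arranges the (cylindrical, or—passing back through the correspondence—Juh{\'a}sz's $\mathrm{Sym}^d(\Sigma)$) almost complex structure so that for each $p$ in the boundary collar the slice $\{p\}\times[0,1]\times\R$ is pseudoholomorphic; positivity of intersections then forces $u\cdot(\{p\}\times[0,1]\times\R)=n_p(D)=0$, so $u$ misses the collar. Equivalently, one caps off the boundary components of $\Sigma$ with disks to form a closed surface $\widehat\Sigma\supset\Sigma$, leaving $\boldalpha$, $\boldbeta$, and $D$ unchanged; $u$ then becomes a holomorphic curve in $\widehat\Sigma\times[0,1]\times\R$ avoiding the capping disks.

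With $u$ confined to a neighborhood of $\overline D\subset\Sigma_0$, I would run Lipshitz's index computation verbatim. Its ingredients are: a cut-and-double of $S$ along the arcs of $\partial S$ mapping to $\boldalpha$ (or the analogous construction at $\boldbeta$); a Riemann--Roch computation for the resulting $\bar\partial$-operator; and the Gromov-type analysis describing the contribution of each puncture over the points of $\mathbf x$ and $\mathbf y$, together with the correction coming from the corners of $\overline D$ at those points. Each of these contributions—and likewise $e(D)$, $n_{\mathbf x}(D)$, $n_{\mathbf y}(D)$—is computed entirely within a neighborhood of $\overline D$, hence is unaffected by whether $\Sigma$ is closed or merely contains $\overline D$ in its interior. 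Summing the local contributions reproduces Lipshitz's identity $\mathrm{ind}(\bar\partial_u)=e(D)+n_{\mathbf x}(D)+n_{\mathbf y}(D)$, which is $\mu(\phi)$.

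The main obstacle is precisely the confinement step of the second paragraph: one must verify that the class of almost complex structures used to define $SFH$ (and, separately, the cylindrical structures used in the index count) can be chosen to make the boundary-collar slices pseudoholomorphic, so that positivity of intersections applies and yields intersection number exactly $n_p(D)$. This is a mild point, implicit in \cite[Sec.~3]{MR2253454}, but it is the only place where the sutured structure does more than passively accompany the argument; everything after it is a transcription of \cite[Sec.~4]{MR2240908}.
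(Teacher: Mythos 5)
Your proposal is correct and follows essentially the same route as the paper: both reduce to Lipshitz's cylindrical index computation after observing that the domain (and hence the relevant curves) avoids a collar of $\partial\Sigma$, which the paper implements by capping off the boundary components of $\Sigma$ with pointed disks and imposing zero intersection with $\{z_i\}\times[0,1]\times\R$. The only difference is one of emphasis: the paper's explicit concern is verifying that Lipshitz's arguments (the moduli-space correspondence, the formula $\mathrm{ind}(u)=d-\chi(S)+2e(D)$, and the computation of $\chi(S)$) do not require $d=g(\widehat{\Sigma})$, a point your locality argument covers implicitly.
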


\begin{proof}
We must first check that Lipshitz's cylindrical reformulation of Heegaard Floer homology applies in the sutured Floer homology setting. To this end, let $\widehat{\Sigma}$ be the closed surface obtained by capping off the boundary components of $\Sigma$ with disks.  Choose a point $z_i$ in the interior of each of the capping disks.  Stabilize $\widehat{\Sigma}$, if necessary, to ensure that $d>1$.  

Now, as in \cite{MR2240908} (see also \cite[Sec. 5.2]{GT0512286}), we can form the $4$--manifold \[W = \widehat{\Sigma} \times [0,1] \times \mathbb{R}.\]  Let \[C_\alpha = \bigcup_{i=1}^d \alpha_i \times \{1\} \times \R, \,\,\, C_\beta = \bigcup_{i=1}^d \beta_i \times \{0\} \times \R,\]  and 
\[\pi_{\widehat{\Sigma}}: W \rightarrow \widehat{\Sigma}, \,\, \pi_{\mathbb{D}}: W \rightarrow [0,1] \times \mathbb{R}\] the projection maps.  Lipshitz proves, in \cite[App. A]{MR2240908}, that the chain complex he defines coincides with the Heegaard Floer chain complex.  In particular, he shows that with appropriately generic choices (see \cite[Sec. 1]{MR2240908}), a $J$-holomorphic map $u: S \rightarrow W$ of a surface with boundary $S$ with $d$ positive punctures ${\bf x} = x_1, \ldots, x_d$ and $d$ negative punctures ${\bf y} = y_1, \ldots, y_d$ which has zero intersection with the subvarieties $\{z_i\} \times [0,1]\times \mathbb{R}$ and which satisfies conditions ({\bf M0}) - ({\bf M6}) in \cite[Sec. 1]{MR2240908} corresponds to a $J$-holomorphic map $\phi: \mathbb{D} \rightarrow Sym^d(\Sigma)$ in the homotopy class $\pi_2({\bf x},{\bf y})$ in the sutured Floer homology setting.  None of the arguments proving equivalence of the two moduli spaces require $d= g(\widehat{\Sigma})$.


Next, we must check that Lipshitz's calculation of the expected dimension of the moduli space of $J$-holomorphic maps $u: S \rightarrow W$ as above does not need $d= g(\widehat{\Sigma})$.

To verify this, notice that the Maslov index \cite[Eqn. 6]{MR2240908}:
\[\mbox{ind}(u) = d - \chi(S) + 2e(D)\]
depends only upon the pullback along $\pi_{\Sigma} \circ u$ of the complex bundle $T\Sigma$ to $S$.  In particular, this index formula is valid for any complex line bundle on $S$ satisfying the required ``matching conditions'' on the boundary of $S$ and, hence, does not require that $d=g(\widehat{\Sigma})$.

Furthermore, \cite[Prop. 4.2]{MR2240908} computes $\chi(S)$ in terms of data on the domain $D = (\pi_{\Sigma} \circ u)(S)$ which represents $\phi \in \pi_2({\bf x},{\bf y})$:
\[\chi(S) = d - n_{\bf x}(D) - n_{\bf y}(D) + e(D).\]
Lipshitz's proof of this Proposition does not depend upon $g(\widehat{\Sigma})$.
The conclusion:
\[\mbox{ind}(u) = \mu(\phi) = e(D) + n_{\bf x}(D) + n_{\bf y}(D)\]
follows.
\end{proof}

\section{Sutured Heegaard Multi-Diagrams and Polygons} \label{sec:MultiDiag}
To describe the differentials in the filtered chain complexes underlying the spectral sequences involved in the proof of Theorem \ref{thm:SpecSeq}, we will need to define sutured Heegaard multi-diagrams, the natural analogue of traditional Heegaard multi-diagrams, discussed in \cite{MR2113019} and \cite{MR2141852}.

\begin{definition}
A {\em balanced sutured Heegaard multi-diagram} is a tuple $(\Sigma, \boldeta^0, \boldeta^1, \ldots, \boldeta^n)$ where 
\begin{enumerate} 
\item $\Sigma$ is a compact, oriented surface with boundary, having no closed components.  
\item $\boldeta^i = \{\eta^i_1, \ldots, \eta^i_d\}$ for $i=0,\ldots, n$, $d$ a fixed non-negative integer, is a collection of pairwise disjoint simple closed curves in $\mbox{Int}(\Sigma)$, which are linearly independent in $H_1(\Sigma)$.
\end{enumerate}
\end{definition}

As usual, this definition is closely related to a certain four-dimensional cobordism between sutured $3$--manifolds.  As in \cite{MR2253454}, we associate to each $d$-tuple, $\boldeta^i$, of linearly-independent curves a $3$--manifold $U_i$, obtained by attaching $2$--handles to $\Sigma \times I$ along $\boldeta^i \subset \Sigma \times \{-1\}$.

As in \cite[Sec. 8]{MR2113019}, we can now construct from the tuple $\{\boldeta^0, \ldots \boldeta^{n}\}$ the following natural $4$--dimensional identification space.  Let $P_{n+1}$ denote a topological $(n+1)$-gon, with vertices labeled $v_{i}$ for $i \in \Z_{n+1}$, labeled in a clockwise fashion.  Denote the edge connecting $v_i$ to $v_{i+1}$ by $e_i$.  Then let \[X_{\eta^0, \ldots, \eta^n} := \frac{(P_{n+1} \times \Sigma) \coprod_{i=0}^n (e_i \times U_i)}{(e_i \times \Sigma) \sim (e_i \times \partial U_i)}\]

We will often denote the four-manifold constructed above by $X$, for short.  Note that the identification between $\Sigma$ and $\partial U_i$ occurs only along the portion of $\partial U_i$ corresponding naturally to $\Sigma$.  More specifically, 
\begin{equation} \label{eqn:DecompUi}\partial U_i = (\Sigma) \cup (\partial\Sigma \times I) \cup (\Sigma_{\eta^i}),
\end{equation}
where $\Sigma_{\eta^i}$ is the result of performing surgery to $\Sigma$ along all of the imbedded $\eta^i_j$ curves.  The identification between $(e_i \times \Sigma)$ and $(e_i \times \partial U_i)$ takes place only along the first term of the decomposition in (\ref{eqn:DecompUi}).

In order to set up the appropriate admissibility hypotheses ensuring the finiteness of holomorphic polygon counts, we will need some basic results about the algebraic topology of $X$ and its relationship with homotopy classes of topological $(n+1)$--gons.

Let ${\bf x}_{i+1} \in \mathbb{T}_{\eta^i} \cap \mathbb{T}_{\eta^{i+1}}$ for $i \in \Z_{n+1}$ and let $\pi_2({\bf x}_0, \ldots, {\bf x}_n)$ denote the set of homotopy classes of Whitney $(n+1)$--gons in the sense of \cite[Sec. 8.1.2]{MR2113019}.  Proposition \ref{prop:AffineIdent} below implies that any two Whitney $(n+1)$--gons in $\pi_2({\bf x}_0, \ldots, {\bf x}_n)$ differ by the addition of a so-called  {\em $(n+1)$--periodic domain}.

\begin{definition} An {\em $(n+1)$--periodic domain} $\mathcal{P}$ is a $2$--chain on $\Sigma$ whose boundary is a $\mathbb{Z}$-linear combination of curves in $\boldeta^0, \ldots, \boldeta^n$. 
\end{definition}

\begin{proposition}  \label{prop:AffineIdent}Let $d>2$ and ${\bf x}_{i+1} \in \mathbb{T}_{\eta^i} \cap \mathbb{T}_{\eta^{i+1}}$ for $i \in \Z_{n+1}$.  If $\pi_2({\bf x}_0, \ldots, {\bf x}_{n})$ is non-empty, then 
\[\pi_2({\bf x}_0, \ldots, {\bf x}_{n}) \cong \mbox{Ker}\left(\bigoplus_{i=0}^n\mbox{Span }([\eta^i_j]_{j=1}^d) \rightarrow H_1(\Sigma;\Z)\right)\]
\end{proposition}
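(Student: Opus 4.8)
The plan is to model the proof on the analogous identification of $\pi_2({\bf x},{\bf y})$ with homotopy classes of disks in the closed case (as in \cite[Sec. 2.4]{MR2113019} and its $(n+1)$-gon generalization in \cite[Sec. 8]{MR2113019}), adapting it to the sutured setting exactly as Proposition \ref{prop:H1andH2} adapted the $H_2$ computation. First I would fix a reference Whitney $(n+1)$-gon $\phi_0 \in \pi_2({\bf x}_0,\ldots,{\bf x}_n)$ (which exists by hypothesis). Given any other $\phi \in \pi_2({\bf x}_0,\ldots,{\bf x}_n)$, the domains $\mathcal{D}(\phi) - \mathcal{D}(\phi_0)$ is a $2$-chain on $\Sigma$ whose boundary lies on $\boldeta^0 \cup \cdots \cup \boldeta^n$ — i.e., an $(n+1)$-periodic domain — because the corner/boundary data of $\phi$ and $\phi_0$ at the ${\bf x}_i$ agree. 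Conversely, adding an $(n+1)$-periodic domain to $\phi_0$ produces another element of $\pi_2({\bf x}_0,\ldots,{\bf x}_n)$, provided one checks that the periodic domain can indeed be realized by a genuine Whitney $(n+1)$-gon; this is where the condition $d>2$ enters, ensuring $\pi_1(\mathrm{Sym}^d(\Sigma))$ behaves well enough (the relevant $\pi_1$ and $\pi_2$ vanishing statements for symmetric products used by Ozsv\'ath--Szab\'o require $d>2$, and nothing in their argument uses that $\Sigma$ is closed). So the map $\phi \mapsto \mathcal{D}(\phi) - \mathcal{D}(\phi_0)$ sets up a bijection between $\pi_2({\bf x}_0,\ldots,{\bf x}_n)$ and the group of $(n+1)$-periodic domains.

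The remaining step is to identify the group of $(n+1)$-periodic domains with $\mathrm{Ker}\left(\bigoplus_{i=0}^n \mathrm{Span}([\eta^i_j]_{j=1}^d) \to H_1(\Sigma;\Z)\right)$. A periodic domain $\mathcal{P}$ determines, by taking its boundary, an element of $\bigoplus_i \mathrm{Span}([\eta^i_j]_j)$ whose total image in $H_1(\Sigma;\Z)$ is $[\partial \mathcal{P}] = 0$; this gives a well-defined map from periodic domains to the asserted kernel. Conversely, given a tuple $(c_0,\ldots,c_n)$ with $\sum_i c_i$ a boundary in $\Sigma$, one chooses a $2$-chain $\mathcal{P}$ with $\partial \mathcal{P} = \sum_i c_i$; this $\mathcal{P}$ is unique up to $H_2(\Sigma;\Z) = 0$ since $\Sigma$ has boundary (it has no closed components, by the definition of balanced sutured multi-diagram), so the map is a bijection. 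Both maps are clearly group homomorphisms, so this is a group isomorphism. Composing the two bijections gives the statement — and I should record that, as usual, the identification is only \emph{natural up to the choice of} $\phi_0$, i.e., it is an identification of $\pi_2({\bf x}_0,\ldots,{\bf x}_n)$ as a torsor over the stated abelian group, with an actual identification once $\phi_0$ is fixed.

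The main obstacle I anticipate is the surjectivity half of the first bijection: verifying that every $(n+1)$-periodic domain is actually realized by an honest homotopy class of Whitney $(n+1)$-gons, rather than just by a formal $2$-chain. In the closed case this rests on the computation of $\pi_1$ and $\pi_2$ of $\mathrm{Sym}^d(\Sigma)$ for $d>2$ and on a gluing/obstruction argument assembling a map of the $(n+1)$-gon into $\mathrm{Sym}^d(\Sigma)$ from its behavior near the vertices and edges. I would check carefully that none of these arguments — the vanishing of the relevant homotopy groups of the symmetric product, the gluing of local maps, the identification of the obstruction class with the prescribed boundary data — uses that $\Sigma$ is closed or that $d = g(\Sigma)$; by the same token as in the proof of Proposition \ref{prop:MasIndex} above, these arguments go through verbatim for a surface with boundary. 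Everything else (the $H_2(\Sigma) = 0$ input, additivity of domains, the kernel description) is routine algebraic topology once this point is settled.
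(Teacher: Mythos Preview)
Your proposal is essentially correct and follows the same route as the paper: reduce to \cite[Prop. 8.3]{MR2113019}, identify homotopy classes with periodic domains via a reference polygon, and identify periodic domains with the stated kernel using $H_2(\Sigma;\Z)=0$.

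One point deserves sharpening. You write that ``the relevant $\pi_1$ and $\pi_2$ vanishing statements for symmetric products used by Ozsv\'ath--Szab\'o require $d>2$, and nothing in their argument uses that $\Sigma$ is closed,'' and later that ``these arguments go through verbatim for a surface with boundary.'' This is not quite accurate: in the closed case Ozsv\'ath--Szab\'o do \emph{not} have $\pi_2(\mathrm{Sym}^d(\Sigma))=0$; rather $\pi_2(\mathrm{Sym}^d(\Sigma))\cong\Z$, generated by a sphere class, which is why their Proposition 8.3 has an extra $\Z$ summand. The genuinely new input here --- and the only place the argument does \emph{not} go through verbatim --- is that $\pi_2(\mathrm{Sym}^d(\Sigma))\cong 0$ when $\Sigma$ has nonempty boundary. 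The paper supplies this explicitly: since $\Sigma\simeq\bigvee S^1$, one has $\mathrm{Sym}^d(\Sigma)\simeq\mathrm{Sym}^d(\mathbb{C}\setminus\{\text{finitely many points}\})$, which is identified with the complement of a generic hyperplane arrangement in $\mathbb{C}^d$, and then one invokes a theorem of Hattori \cite{MR0379883} to conclude $\pi_2=0$ for $d>2$. You should flag this as the actual content of the adaptation rather than treating it as something that carries over unchanged.
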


The above is an affine correspondence.

\begin{proof}
The proof follows exactly as in the proof of \cite[Prop. 8.3]{MR2113019}, except that in our case \[\pi_2(Sym^d(\Sigma)) \cong 0,\]
since $\Sigma$ has non-empty boundary.  To see this, we adapt the argument in the proof of \cite[Prop. 2.7]{MR2113019}.  Suppose that $\Sigma = F_{g,b}$ is a genus $g$ surface with $b>0$ boundary components.  Then $\Sigma$ is homotopy-equivalent to a wedge of $2g + (b-1)$ circles, hence, Sym$^d(\Sigma)$ is homotopy equivalent to Sym$^d(\mathbb{C} - \{z_1, \ldots, z_{2g+(b-1)}\})$, which is naturally identified with the space of monic polynomials $p$ of degree $d$, satisfying $p(z_i) \neq 0$.  By considering the coefficients of the polynomials, this space, in turn, is naturally identified with $\mathbb{C}^d$ minus $2g+(b-1)$ generic hyperplanes $H_i$.  \cite[Thm. 3]{MR0379883} then implies that $\pi_2(\mathbb{C}^d - H_1 - \ldots H_{2g+(b-1)}) = 0$ when $d>2$.

Hence, we obtain \[\pi_2({\bf x}_0, \ldots, {\bf x}_n) \cong \mbox{Ker}\left(\bigoplus_{i=0}^n\mbox{Span }([\eta^i_j]_{j=1}^d) \rightarrow H_1(\Sigma;\Z)\right)\]
as desired.
\end{proof}

Furthermore, we have:

\begin{proposition} \label{prop:PerDomains}(analogue of \cite[Prop. 8.2]{MR2113019}) There is a natural identification 
\[H_2(X;\Z) \cong \mbox{Ker}\left(\bigoplus_{i=0}^n\mbox{Span }([\eta^i_j]_{j=1}^d) \rightarrow H_1(\Sigma;\Z)\right),\]
and
\[H_1(X;\Z) \cong \mbox{Coker}\left(\bigoplus_{i=0}^n \mbox{Span }([\eta^i_j]_{j=1}^d) \rightarrow H_1(\Sigma;\Z)\right).\]

\end{proposition}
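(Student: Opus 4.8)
The plan is to mimic the proof of \cite[Prop. 8.2]{MR2113019}, computing the homology of $X$ directly from the Mayer--Vietoris spectral sequence (or, more elementarily, an iterated Mayer--Vietoris argument) associated to the explicit decomposition $X = (P_{n+1} \times \Sigma) \cup \bigl(\coprod_i e_i \times U_i\bigr)$. First I would record the building blocks: $P_{n+1} \times \Sigma$ is homotopy equivalent to $\Sigma$, each $e_i \times U_i$ is homotopy equivalent to $U_i$, and each overlap $e_i \times \Sigma$ is homotopy equivalent to $\Sigma$. Since $\Sigma$ has nonempty boundary, it is homotopy equivalent to a wedge of circles, so $H_k(\Sigma) = 0$ for $k \geq 2$; likewise, the handlebody-type manifold $U_i$ (obtained by attaching $2$--handles to $\Sigma \times I$ along the $\eta^i_j$) satisfies $H_2(U_i) = 0$ by the long exact sequence of the pair $(U_i,\Sigma)$, exactly as in the proof of Proposition \ref{prop:H1andH2}, and $H_1(U_i) \cong H_1(\Sigma)/\Span([\eta^i_j]_j)$. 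The point $P_{n+1}$ being contractible is what makes the $P_{n+1}\times\Sigma$ piece behave like a single copy of $\Sigma$ glued to the $n+1$ pieces $U_i$ along their common boundary copies of $\Sigma$.

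Next I would assemble these into the relevant portion of the long exact (Mayer--Vietoris) sequence. Writing $A = P_{n+1} \times \Sigma$ and $B = \coprod_{i=0}^n (e_i \times U_i)$, with $A \cap B \simeq \coprod_{i=0}^n \Sigma$, the sequence reads
\[\begin{CD}
H_2(A)\oplus H_2(B) @>>> H_2(X) @>>> \bigoplus_{i=0}^n H_1(\Sigma) @>\Phi>> H_1(A)\oplus H_1(B) @>>> H_1(X) @>>> 0,
\end{CD}\]
where I have used $H_2(A) = H_2(\Sigma) = 0$ and $H_2(B) = \bigoplus_i H_2(U_i) = 0$. The map $\Phi$ sends a tuple $(c_i)_{i} \in \bigoplus_i H_1(\Sigma)$ to $\bigl(\sum_i c_i,\ (c_i \bmod \Span([\eta^i_j]_j))_i\bigr)$, i.e.\ the difference of the inclusion $\Sigma \hookrightarrow A$ on the one hand and the inclusions $\Sigma \hookrightarrow U_i$ on the other. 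Chasing this: $H_2(X) \cong \ker \Phi$, and an element $(c_i)$ lies in $\ker\Phi$ iff each $c_i \in \Span([\eta^i_j]_j)$ and $\sum_i c_i = 0$ in $H_1(\Sigma)$ — which is precisely $\Ker\bigl(\bigoplus_i \Span([\eta^i_j]_{j=1}^d) \to H_1(\Sigma)\bigr)$. Similarly, $H_1(X) \cong \mathrm{coker}\,\Phi$; since $H_1(A) = H_1(\Sigma)$ is hit surjectively by the first coordinate $\sum_i c_i$ (take $c_0$ arbitrary, the rest zero), the cokernel reduces to $H_1(\Sigma)$ modulo the subgroup $\sum_i \Span([\eta^i_j]_j)$, i.e.\ $\mathrm{Coker}\bigl(\bigoplus_i \Span([\eta^i_j]_{j=1}^d) \to H_1(\Sigma)\bigr)$, after identifying the two summands of $H_1(A)\oplus H_1(B)$ appropriately along the image of $A\cap B$.

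The only genuinely delicate point is making the Mayer--Vietoris bookkeeping honest: the piece $B$ is a disjoint union, and $A \cap B$ is a disjoint union of $n+1$ copies of $\Sigma$, so one must be careful that the map $H_*(A\cap B) \to H_*(A)\oplus H_*(B)$ is the one described — in particular that all $n+1$ copies map into the single $\Sigma \simeq A$ via homotopic inclusions (true, since $P_{n+1}$ is connected and the edges $e_i$ all lie in its boundary), so that the ``$A$-component'' of $\Phi$ is the summation map rather than a block-diagonal map. This is exactly the structure exploited in \cite[Sec. 8]{MR2113019} for the closed case, and the argument is insensitive to whether $d = g(\widehat\Sigma)$ or to the presence of boundary; I would simply cite \cite[Prop. 8.2]{MR2113019} for the combinatorial part of the computation and note that the only input that changes is the vanishing $H_2(\Sigma) = 0$ (replacing the closed-surface computation $H_2(\widehat\Sigma) = \Z$), which in fact simplifies the sequence by killing a term that would otherwise contribute. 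I expect no real obstacle beyond this homological bookkeeping; the naturality of the identification follows, as usual, from the naturality of Mayer--Vietoris.
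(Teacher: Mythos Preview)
Your argument is correct, but the paper takes a slightly different route: rather than Mayer--Vietoris for the cover $A = P_{n+1}\times\Sigma$, $B = \coprod_i e_i\times U_i$, it uses the long exact sequence of the \emph{pair} $(X,\,P_{n+1}\times\Sigma)$ together with excision. Excision identifies $H_*(X,\,P_{n+1}\times\Sigma)\cong\bigoplus_i H_*(U_i,\Sigma)$, and then the boundary map $\partial\colon H_2(U_i,\Sigma)\to H_1(\Sigma)$ is injective with image $\Span([\eta^i_j]_j)$, while $H_1(U_i,\Sigma)=0$; combined with $H_2(P_{n+1}\times\Sigma)=0$, the pair sequence collapses immediately to the two asserted isomorphisms. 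The two arguments are of course closely related (Mayer--Vietoris is derivable from the pair sequence plus excision), but the paper's version avoids tracking the $(n+1)$-fold summation map into $H_1(A)$ and the somewhat delicate cokernel computation you sketch at the end---the relative groups $H_2(U_i,\Sigma)$ already \emph{are} $\Span([\eta^i_j]_j)$, so the map $\bigoplus_i\Span\to H_1(\Sigma)$ appears directly as the connecting homomorphism rather than being extracted from the kernel/cokernel of your $\Phi$. Your approach has the minor virtue of working entirely in absolute homology, at the cost of an extra summand's worth of bookkeeping; you should also justify the trailing $\to 0$ in your sequence by checking that $H_0(A\cap B)\to H_0(A)\oplus H_0(B)$ is injective (it is, since the $i$-th copy of $\Sigma$ maps isomorphically on $H_0$ to the $i$-th copy of $U_i$).
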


\begin{proof} Just as in the proof of \cite[Prop. 8.2]{MR2113019}, we examine the long exact sequence of the pair $(X,P_{n+1} \times \Sigma)$.  As in that proof, the boundary homomorphism $$\partial: H_2(U_{i},\Sigma;\Z) \rightarrow H_1(\Sigma;\Z)$$ is injective, and its image is $Span([\eta^i_j]_{j=1}^d)$.

The conclusion then follows by noting that 

\begin{enumerate}
\item $H_2(P_{n+1} \times \Sigma;\Z) = 0$, since $\Sigma$ is not closed,
\item $H_2(X, P_{n+1} \times \Sigma;\Z) \cong \bigoplus_{i=0}^n H_2 (U_{i}, \Sigma;\Z)$ by excision, and
\item $H_1(X,P_{n+1} \times \Sigma;\Z) = 0$ also by excision.
\end{enumerate}
\end{proof}

The correspondence in Proposition \ref{prop:PerDomains} can be made explicit by associating to each periodic domain $\mathcal{P}$ an element of $H_2(X;\mathbb{Z})$ as follows.  For each $\eta^i_j$, let $E^i_j$ denote the core disk of the associated $2$--handle and suppose
\[\partial(\mathcal{P}) = \sum_{i,j} e_{ij} \eta^i_j.\]
Then 

\[\mathcal{H}({\mathcal{P}}) := \mathcal{P} + \sum_{i,j} e_{ij} E^i_j \in H_2(X;\mathbb{Z}).\]

\subsection{Constructing Spin$^c$ Structures on $X_{\eta^{i}}$} \label{sec:Spinc}
As in \cite{MR2113019}, we obtain a natural map from homotopy classes of $n$-gons to Spin$^c$ structures on $X$.  We need to understand Spin$^c$ structures on $X_{\eta^i}$ in order to formulate the correct admissibility hypotheses for the sutured multi-diagrams of interest to us in the present work.

We begin with some standard definitions and facts about relative Spin$^c$ structures on $3$-- and $4$--manifolds (with and without boundary).  See \cite{MR1484699}, \cite[Sec. 2.6 \& 8.1.3]{MR2113019}, and \cite[Sec. 3.2]{GT0512286} for more details.

\begin{definition} Let $(X,Z)$ pe a pair, with $X$ a $3$-- or $4$--manifold (possibly with boundary), and $Z \subset X$ a closed, smoothly imbedded submanifold (possibly with boundary).  Then a {\em relative Spin$^c$ structure}, in Spin$^c(X,Z)$, is a homology class of pairs $(J,P)$, where 
    \begin{itemize}
      \item $P \subset X- Z$ is a finite collection of points, 
      \item $J$ is an almost-complex structure (equivalent to an oriented $2$--plane field, when $X$ is oriented and equipped with a Riemannian metric) defined over $X - P$, extending a particular fixed almost-complex structure on $Z$.  
    \end{itemize}
Two pairs $(J_1, P_1)$ and $(J_2,P_2)$ are said to be homologous if there exists a compact $1$--manifold $C \subset X - Z$ with $P_1,P_2 \subset \partial(C)$ satisfying $J_1|_{X-C} \sim J_2|_{X-C}$, where $\sim$ denotes isotopy fixing the almost complex structure on $Z$.  Spin$^c(X,Z)$ is an affine set for the action of $H^2(X,Z;\Z)$.
\end{definition}

\begin{remark}
  If $(Y,\Gamma)$ is a connected, balanced, sutured $3$--manifold, then we denote by $Spin^c(Y,\Gamma)$ the set of relative Spin$^c$ structures for the pair $(Y,\partial Y)$ in the sense of the above definition.  In other words, elements of $Spin^c(Y,\Gamma)$ are homology classes of nowhere-vanishing vector fields on $Y$, all of which agree with a particular vector field, $v_0$, on $\partial Y$. Note that this is equivalent to an oriented $2$--plane field on $Y$ extending a particular $2$--plane field on $\partial Y$, when $Y$ is oriented and equipped with a Riemannian metric.  To define $v_0$ (see \cite{MR2253454}), recall that if $Y$ is a sutured manifold, then 
\[\partial(Y) = R^+ \cup R^- \cup \Gamma.\]
Furthermore, $\Gamma$ is naturally identified with $s(\Gamma) \times I$, with $\partial(R^-) = s(\Gamma) \times \{-1\}$, and $\partial(R^+) = s(\Gamma) \times \{4\}$.  Then $v_0$ is defined to point out of $Y$ along $R^+$, into $Y$ along $R^-$, and along the gradient of the height function $s(\gamma) \times I \rightarrow I$ along $\gamma$.  Here, two vector fields $v_1$ and $v_2$ are said to be homologous if there exists a finite set of points, $P \subset (Y - \partial Y)$, such that $v_1|_{Y-P} \sim v_2|_{Y-P}$.  Spin$^c(Y)$ is an affine set for the action of $H^2(Y,\partial Y;\Z)$.
\end{remark}

Now let 

\begin{itemize}
\item $X$ be the $4$--manifold associated to a sutured Heegaard multi-diagram, 
\item $Y = \partial X$,
\item $Y' = Y_{\eta^0, \eta^1} \cup \ldots \cup Y_{\eta^{n-1},\eta^n} \cup -Y_{\eta^0,\eta^n} \subset Y$, and
\item $Z = \overline{Y - Y'}$.  
\end{itemize}

Note that $Y_{\eta^i,\eta^{i+1}}$ is a sutured Heegaard diagram for each $i \in \Z_n$.  \Juhasz defines a map \cite[Defn. 4.5]{MR2253454}:
\[\mathfrak{s}: \mathbb{T}_{\eta^i} \cap \mathbb{T}_{\eta^{i+1}} \rightarrow \mbox{Spin}^c(Y_{\eta^i,\eta^{i+1}}, \Gamma).\] 



We further have:

\begin{proposition}
There is a well-defined map $\mathfrak{s}_X: \pi_2({\bf x}_0, \ldots, {\bf x}_n) \rightarrow \mbox{Spin}^c(X,Z)$ satisfying the property that \[\mathfrak{s}_X(\Psi)|_{Y_{\eta^i,\eta^{i+1}}} = \mathfrak{s}({\bf x}_{i+1}) \,\, \forall \,\, i \in \Z_{n+1}\]
\end{proposition}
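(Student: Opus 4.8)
The plan is to mimic the construction of the Spin$^c$ refinement map in \cite[Sec. 8.1.3]{MR2113019}, adapting it to the sutured setting. First I would fix a homotopy class $\Psi \in \pi_2({\bf x}_0, \ldots, {\bf x}_n)$, represented by a Whitney $(n+1)$--gon $u \colon P_{n+1} \to Sym^d(\Sigma)$ with the appropriate boundary conditions. Following the standard recipe, such a $u$ determines a $1$--cycle (really a $1$--chain with boundary contained in the handle cores) inside $X$: one takes the union of the $d$ arcs traced out by the points of $u$ over the $1$--skeleton of $P_{n+1}$, pushes their endpoints into the cores $E^i_j$ of the $2$--handles attached along the $\eta^i_j$, and obtains a closed $1$--manifold $\gamma_u \subset X - Z$ (the hypothesis $\partial \eta^i \cap \Gamma = \emptyset$ guarantees the cycle stays away from $Z$). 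Using the identification of $X$ with the $P_{n+1} \times \Sigma$ mapping cylinder and the fact that $P_{n+1} \times \Sigma$ deformation retracts onto $\Sigma$, one constructs a canonical $2$--plane field (equivalently, almost-complex structure) on $X$ away from a neighborhood of $\gamma_u$, agreeing with the fixed structure on $Z$; declaring $\mathfrak{s}_X(\Psi)$ to be the homology class of this pair $(J, P)$ gives the map.

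Next I would verify well-definedness: two representatives of the same $\Psi$ are homotopic as maps of $P_{n+1}$, and a homotopy produces the compact $1$--manifold $C$ required in the definition of ``homologous'' relative Spin$^c$ structures, so the class in $Spin^c(X,Z)$ is independent of the chosen representative. (Here $d>2$ together with Proposition \ref{prop:AffineIdent} ensures $\pi_2(Sym^d(\Sigma)) = 0$, so there is no ambiguity coming from $\pi_2$ of the symmetric product — the only ambiguity in $\Psi$ itself is by periodic domains, and these are handled by the affine structure.) I would then check that the assignment is compatible with the affine $H^2$-actions, i.e. that adding the class $\mathcal{H}(\mathcal{P})$ of an $(n+1)$--periodic domain $\mathcal{P}$ to $\Psi$ changes $\mathfrak{s}_X(\Psi)$ by the image of $[\mathcal{H}(\mathcal{P})] \in H_2(X;\Z) \cong H^2(X,Z;\Z)$ under Poincaré–Lefschetz duality; this is a local computation near the handle cores, essentially identical to the closed case.

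Finally, for the restriction property, I would observe that the edge $e_i$ of $P_{n+1}$ sits in $X$ with a neighborhood modeled on $e_i \times U_i$, and the portion of $\partial X$ over $e_i$ is exactly (a copy of) the sutured $3$--manifold $Y_{\eta^i, \eta^{i+1}}$ built from the Heegaard diagram $(\Sigma, \boldeta^i, \boldeta^{i+1})$. Restricting the vector field $(J,P)$ constructed above to this boundary piece, one recovers precisely the vector field that \Juhasz{} uses in \cite[Defn. 4.5]{MR2253454} to define $\mathfrak{s}({\bf x}_{i+1})$ — the arc of $u$ over the vertex $v_{i+1}$ is, after the standard identification of a neighborhood of a vertex with the ``nose'' of the strip in the $2$--variable case, exactly the data entering Juh\'asz's definition. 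Hence $\mathfrak{s}_X(\Psi)|_{Y_{\eta^i,\eta^{i+1}}} = \mathfrak{s}({\bf x}_{i+1})$ for all $i \in \Z_{n+1}$.

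The main obstacle I anticipate is bookkeeping rather than conceptual: one must be careful that the fixed almost-complex structure on $Z$ is respected at every stage (in particular that all constructions can be made relative to $\partial\Sigma \times I$ and the surgered pieces $\Sigma_{\eta^i}$), and that the handle-core pushoffs are done consistently across all $n+1$ edges so that the resulting relative Spin$^c$ structure genuinely restricts correctly on each of the $n+1$ boundary Heegaard diagrams simultaneously. Since $\Sigma$ has boundary, one also wants to double-check that $H^2(X,Z;\Z)$ is the right affine model — but this is exactly the content of Proposition \ref{prop:PerDomains} together with excision, so no new input is needed.
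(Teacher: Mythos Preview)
Your overall strategy---adapt the Ozsv\'ath--Szab\'o construction from \cite[Sec.~8.1.4]{MR2113019} to the sutured setting, then verify well-definedness and the boundary restriction property---matches the paper's. However, the concrete construction you sketch does not: you describe a \emph{one}-dimensional singular locus $\gamma_u$ obtained from the arcs of $u$ over the $1$-skeleton of $P_{n+1}$, pushed into the handle cores $E^i_j$. This is the right picture in the $3$-manifold setting (Juh\'asz's map $\mathfrak{s}$), but in the $4$-manifold $X$ the singular set of the $2$-plane field must be \emph{two}-dimensional and must live over all of $P_{n+1}$, not just its boundary. The paper builds this explicitly: it chooses Morse functions $f_i\colon U_i\to[-1,\tfrac32]$ on each relative handlebody, and defines an immersed surface $F\subset X$ whose intersection with $P_{n+1}\times\Sigma$ is the graph $\{(x,\sigma):\sigma\in u(x)\}$ and whose intersection with $e_i\times U_i$ consists of gradient flowlines of $f_i$. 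The $2$-plane field is $T\Sigma$ on $(P_{n+1}\times\Sigma)-F$ and $\ker(df_i)$ on $(e_i\times U_i)-F$; it is then extended across most of $F$ by foliating $P_{n+1}$ minus a star of $n+1$ arcs by line segments, leaving only a contractible $1$-complex where the field is undefined. Your sketch, as written, does not use the values of $u$ over the interior of $P_{n+1}$ at all, so it cannot distinguish different homotopy classes with the same boundary data, and it does not explain how the $2$-plane field is specified on the bulk of $P_{n+1}\times\Sigma$.

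Your proposed well-definedness argument (a homotopy of representatives of $\Psi$ yields the $1$-manifold $C$ in the definition of homologous Spin$^c$ structures) is plausible but not the route the paper takes. The paper instead argues algebraically: two representatives of the same $\Psi$ differ by a periodic domain $\phi$ with $[\phi]=0$ in $H_2(X;\Z)$, and since $\mathfrak{s}_X(\Psi)-\mathfrak{s}_X(\Psi')=\pi^*[\phi]$ under the map $H^2(X,Y)\to H^2(X,Z)$ from the long exact sequence of the triple $(X,Y,Z)$, the difference vanishes. This algebraic argument is cleaner because it avoids having to track the geometric construction through a homotopy, and it immediately generalizes to the next statement in the paper (two polygons give the same Spin$^c$ structure iff their difference is a sum of doubly-periodic domains).
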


\begin{proof}
We will closely follow the construction given in \cite[Sec. 8.1.4]{MR2113019}, making alterations as necessary.

Let $U_{i}$ be the (relative) handlebody constructed by attaching $3$--dimensional $2$--handles to $\Sigma \times [-1, \frac{3}{2}]$ along $\{\eta^i\} \times \{-1\}$.  Extend the product orientation on $\Sigma \times [-1,\frac{3}{2}]$ in the standard way to obtain the orientation on $U_i$.  Note that $\partial U_i$ is the union of three pieces:

\begin{itemize}
\item $\Sigma := \Sigma \times \{\frac{3}{2}\}$,
\item $\gamma^- := \partial \Sigma \times [-1,\frac{3}{2}]$,
\item $R^-_i := -\left(\overline{\partial U_i - \Sigma - \gamma^-}\right)$, the surgered surface obtained by adding compression disks to $\Sigma$ along $\{\eta^i\}$.  The $-$ sign above indicates that we equip $R_i^-$ with the orientation {\em opposite} to the boundary orientation on $U_i$.
\end{itemize}

Begin by constructing Morse functions $f_i: U_i \rightarrow [-1,\frac{3}{2}]$ with the following properties:

\begin{itemize}
\item $f_i^{-1}(-1) = R_i^-$,
\item $f_i^{-1}(\frac{3}{2}) = \Sigma$,
\item $f_i$ has $d$ index $1$ critical points in the interior of $Y_{\eta^i}$,
\item $f|_{\gamma^-}$ is the projection map to $[-1,\frac{3}{2}]$ described above.
\end{itemize}

We can now specify a $2$--plane field on $Z = \overline{Y - Y'}$ which extends any $2$--plane field on $Y'$ coming from a Spin$^c$ structure on $Y'$ as follows.  Let $Z = Z_1 \cup Z_2,$ where \[Z_1 = \partial(\Sigma) \times P_{n+1},\] and \[Z_2 = \bigcup_{i=0}^n (R^-_i \times e_i).\]

Then, along $Z_1$, we choose the $2$--plane field tangent to $\Sigma$, and along $Z_2$, we choose the $2$--plane field tangent to $R_i^-$.  By construction, this $2$--plane field on $Z$ agrees with the $2$--plane field associated to any Spin$^c$ structure on $Y$.

Given a generic map $u: P_{n+1} \rightarrow Sym^d(\Sigma)$ representing $\phi \in \pi_2({\bf x}_0, \ldots, {\bf x}_n)$, we proceed to extend this $2$--plane field to the complement of a contractible $1$--complex in $X$, producing an element of Spin$^c(X,Z)$, as desired.

Define $F$ to be the immersed surface whose intersection with $U_i \times e_i$ is the $d$-tuple of gradient flowlines of $f_i$ connecting the $d$ index one critical points of $f_i$ with the points $(x,u(x)) \subset e_i\times \Sigma$ and whose intersection with $P_{n+1} \times \Sigma$ is the collection of points $(x,\sigma)$ such that $\sigma \in u(x)$.

In the complement of $F$, we let the $2$--plane field agree with $T\Sigma$ inside $P_{n+1} \times \Sigma$ and $\ker(df_i)$ in $TU_i \subset T(U_i \times e_i)$.  This $2$--plane field is clearly well-defined on $(P_{n+1} \times \Sigma) - F$, and it is well-defined on $(U_i \times e_i) - F$ since $crit(f_i) \subset F$.  Furthermore, these $2$--plane fields agree on $\partial(P_{n+1}) \times \Sigma$ by the properties imposed on $f_i$. 

We extend the $2$--plane field to the complement of a contractible $1$-complex by choosing a point $x \in P_{n+1}$ and $n+1$ straight paths $a_0, \ldots a_n$ to the edges $e_0, \ldots e_n$.  There then exists a natural foliation of $P_{n+1} - \bigcup_{i=0}^n a_i$ by line segments connecting pairs of edges.  The extension of the $2$--plane field to the complement in $F$ of $\bigcup_{i=0}^n a_i \cup (F \cap \Delta)$, where $\Delta$ is the diagonal subspace of Sym$^d(\Sigma)$ (i.e., unordered $d$-tuples in $\Sigma$ with at least one repeated entry), now proceeds exactly as in \cite[Sec. 8.1.4]{MR2113019}, yielding an oriented $2$--plane field in the complement of a contractible $1$--complex of $X$ which agrees with the standard $2$--plane field on $Z$.  In this way, one produces an element of Spin$^c(X,Z)$ associated to a map of an $(n+1)$--gon $u: P_{n+1} \rightarrow Sym^d(\Sigma)$

Note that when $d=0$, $X = P_{n+1} \times \Sigma$, and there is a unique map $u: D^n \rightarrow (Sym^d(\Sigma) \sim pt.)$.  In this case, the construction described above yields a $2$--plane field on $X$ which is everywhere tangent to $\Sigma$.

Now that we have specified an extension,\[\mbox{Spin}^c(Y',\partial Y') \rightarrow \mbox{Spin}^c(X,Z),\]
associated to a particular $(n+1)$--gon representative of $\pi_2({\bf x}_0, \ldots {\bf x}_n)$, we will show that this extension is well-defined.  In other words,

\begin{lemma}
Let $\Psi, \Psi'$ be two $(n+1)$--gons representing the same homotopy class in $\pi_2({\bf x}_0, \ldots, {\bf x}_n)$.  Then they induce the same Spin$^c$ extension \[\mbox{Spin}^c(Y',\partial Y') \rightarrow \mbox{Spin}^c(X,Z).\]
\end{lemma}

\begin{proof}
By definition, a Spin$^c$ structure on $(Y',\partial Y')$ is a $\Z$ lift of the relative cohomology class, \[w_2 \in H^2(Y',\partial Y';\Z_2).\]  By excision, this lift is canonically identified with a relative cohomology class, $c \in H^2(Y,Z;\Z)$.  We have the following commutative diagram:
\[
\xymatrix{
  H^2(Y,Z;\Z) \ar[r]^{q_*} & H^2(Y,Z;\Z_2)\\
  H^2(X,Z;\Z) \ar[r]^{q_*} \ar[u]^{i^*} & H^2(X,Z;\Z_2) \ar[u]^{i^*}\\
  H_2(X;\Z) \cong  H^2(X,Y;\Z) \ar[u]^{\pi^*}& \\
  H_2(Y';\Z) \cong  H^1(Y,Z;\Z) \ar[u]^{\delta}&}
\]

The horizontal maps are the induced maps on cohomology coming from the short exact sequence on coefficients:
\[ 
\xymatrix{
  0 \ar[r] & \Z \ar[r]^{\times 2} & \Z \ar[r]^{q} & \Z_2 \ar[r] & 0}
\]

and the vertical maps are the induced maps on cohomology coming from the short exact sequence of the triple $(X,Y,Z)$.

Suppose that $\Psi,\Psi'$ induce Spin$^c$ structures $c_\Psi$, $c_\Psi'$, extending a particular Spin$^c$ structure $c \in H^2(Y,Z;\Z).$  Since $\Psi, \Psi'$ represent the same element of $\pi_2({\bf x}_0, \ldots, {\bf x}_n)$, their difference is an $(n+1)$--periodic domain $\phi$ satisfying \[[\phi] = 0  \in H_2(X;\Z) \cong \pi_2({\bf x}_0, \ldots, {\bf x}_n).\]  Since \[\pi^*(\phi) = \mathfrak{s}_X(\Psi) - \mathfrak{s}_X(\Psi') = 0 \in H^2(X,Z;\Z),\]
$\Psi$ and $\Psi'$ induce the same Spin$^c$ extension.
\end{proof}
\end{proof}

The following proposition tells us that two $(n+1)$--gons in the same homotopy class, $\pi_2({\bf x}_0, \ldots, {\bf x}_n)$, represent the same element of Spin$^c(X,Z)$ iff their difference is a linear combination of doubly-periodic domains.

\begin{proposition}  Let $\Psi, \Psi' \in \pi_2({\bf x}_0, \ldots, {\bf x}_n)$ be two $(n+1)$--gons in the same homotopy class.  Then $\mathfrak{s}(\Psi) = \mathfrak{s}(\Psi')$ iff the $(n+1)$--periodic domain $\phi = \Psi - \Psi'$ can be written as a $\Z$--linear combination of doubly-periodic domains.
\end{proposition}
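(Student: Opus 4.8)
The plan is to run the sutured counterpart of Ozsv{\'a}th--Szab{\'o}'s argument in the closed case (see \cite[Sec.~8]{MR2113019}), feeding in Propositions \ref{prop:H1andH2}, \ref{prop:AffineIdent}, and \ref{prop:PerDomains} as the algebro-topological inputs. First I would record that $\Psi \mapsto \mathfrak{s}_X(\Psi)$ is affine over the homomorphism $\pi^*$ of the commutative diagram above: writing $\phi = \Psi - \Psi'$ and regarding it as an $(n+1)$--periodic domain, hence (via Propositions \ref{prop:AffineIdent} and \ref{prop:PerDomains}) as a class in $H_2(X;\Z)$, one has $\mathfrak{s}_X(\Psi) - \mathfrak{s}_X(\Psi') = \pi^*(\phi) \in H^2(X,Z;\Z)$ for arbitrary $\Psi,\Psi'$ in the same homotopy class of $(n+1)$--gons; this is exactly the identity extracted in the proof of the preceding lemma. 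Consequently $\mathfrak{s}(\Psi) = \mathfrak{s}(\Psi')$ if and only if $\phi \in \ker \pi^*$, and the proposition reduces to computing $\ker\pi^*$.

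Next I would identify $\ker\pi^*$. Under the Lefschetz duality $H_2(X;\Z) \cong H^2(X,Y;\Z)$ (recall $\partial X = Y$), the map $\pi^*$ is the map $H^2(X,Y;\Z) \to H^2(X,Z;\Z)$ appearing in the cohomology long exact sequence of the triple $(X,Y,Z)$; by exactness, $\ker\pi^* = \operatorname{im}\bigl(\delta\colon H^1(Y,Z;\Z) \to H^2(X,Y;\Z)\bigr)$, where $\delta$ is the connecting homomorphism, exactly as displayed in the left-hand column of the diagram. Using the identification $H^1(Y,Z;\Z) \cong H_2(Y';\Z)$ (excise $\operatorname{Int}(Z)$, then apply Lefschetz duality to the $3$--manifold-with-boundary $Y'$), $\delta$ becomes the map $H_2(Y';\Z) \to H_2(X;\Z)$ induced at the chain level simply by including a $2$--cycle of $\partial X$ into $X$. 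So the claim reduces to showing that this inclusion-induced map has image exactly the subgroup of $(n+1)$--periodic domains generated by the doubly-periodic ones.

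To finish, recall that $Y'$ (the part of $\partial X$ away from $Z$) is the disjoint union of the ``corner'' sutured $3$--manifolds $Y_{\eta^i,\eta^{i+1}}$, $i \in \Z_{n+1}$ read cyclically, the $i$-th of which carries the balanced sutured Heegaard diagram $(\Sigma,\boldeta^i,\boldeta^{i+1})$; hence $H_2(Y';\Z) = \bigoplus_i H_2(Y_{\eta^i,\eta^{i+1}};\Z)$. Proposition \ref{prop:H1andH2}, applied to each of these diagrams, identifies $H_2(Y_{\eta^i,\eta^{i+1}};\Z)$ with $\operatorname{Ker}\bigl(\Span([\boldeta^i]) \oplus \Span([\boldeta^{i+1}]) \to H_1(\Sigma;\Z)\bigr)$, which is exactly the group of doubly-periodic domains for the pair $(\boldeta^i,\boldeta^{i+1})$ — the $2$--chain realizing a prescribed boundary being unique since $\Sigma$ has boundary. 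Such a domain, having boundary inside $\bigoplus_k \Span([\boldeta^k])$, is tautologically an $(n+1)$--periodic domain, and chasing the explicit correspondences (the assignment $\mathcal P \mapsto \mathcal H(\mathcal P)$ of Proposition \ref{prop:PerDomains}, its two-variable analogue, and the way the handle cores $E^i_j$ sit inside $X$) should show that the inclusion-induced map $H_2(Y_{\eta^i,\eta^{i+1}};\Z) \to H_2(X;\Z)$ is precisely this tautological inclusion. Assembling over $i$ then gives $\ker\pi^* = \Z$--span of the doubly-periodic domains, which is the assertion.

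The step I expect to be the main obstacle is this last chain-level identification: verifying that including a $2$--cycle of a corner piece into $X$ carries the doubly-periodic domain produced by Proposition \ref{prop:H1andH2} to the same domain regarded, via Proposition \ref{prop:PerDomains}, as an $(n+1)$--periodic domain. This is bookkeeping with the handlebodies $U_i$, their core disks, and the Lefschetz dualities and $\Z$--lifts appearing in the diagram; in the closed case it is carried out in \cite{MR2113019}, and here one must additionally check that the stratum $Z$ and $\partial\Sigma$ contribute nothing, which holds because they are disjoint from the corner regions of $\partial X$. (Throughout, ``doubly-periodic domain'' should be read as referring to the face pairs $(\boldeta^i,\boldeta^{i+1})$, so that no reduction of ``diagonal'' periodic domains is needed.)
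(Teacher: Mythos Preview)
Your proposal is correct and follows the same route as the paper: reduce $\mathfrak{s}_X(\Psi)=\mathfrak{s}_X(\Psi')$ to $\phi\in\ker\pi^*$, use exactness of the triple $(X,Y,Z)$ to identify $\ker\pi^*=\operatorname{im}\delta$, and then invoke Proposition~\ref{prop:H1andH2} on each boundary piece $Y_{\eta^i,\eta^{i+1}}$ to recognize $\operatorname{im}\delta$ as the span of the doubly-periodic domains. The paper's own argument is terser---it asserts the last identification in one sentence---whereas you spell out (and correctly flag) the bookkeeping needed to match $\delta$ with the tautological inclusion of doubly-periodic domains; your parenthetical about reading ``doubly-periodic'' as referring to the adjacent pairs $(\boldeta^i,\boldeta^{i+1})$ is also the correct reading of the paper's statement.
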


\begin{proof} We use the same commutative diagram as before:
\[
\xymatrix{
  H^2(Y,Z;\Z) \ar[r]^{q_*} & H^2(Y,Z;\Z_2)\\
  H^2(X,Z;\Z) \ar[r]^{q_*} \ar[u]^{i^*} & H^2(X,Z;\Z_2) \ar[u]^{i^*}\\
  H_2(X;\Z) \cong  H^2(X,Y;\Z) \ar[u]^{\pi^*}& \\
  H_2(Y';\Z) \cong  H^1(Y,Z;\Z) \ar[u]^{\delta}&}
\]
which tells us that
\[\mathfrak{s}_X(\Psi) = \mathfrak{s}_X(\Psi') \,\,\Longleftrightarrow \,\,\phi = \Psi- \Psi' \in \ker(\pi^*) = \mbox{im}(\delta).\]  But, since $H_2(Y';\Z)$ is identified with the space of doubly-periodic domains (Proposition \ref{prop:H1andH2}), the condition on the right is precisely the condition that $\phi$ can be expressed as the sum of doubly-periodic domains.
\end{proof}

\subsection{Admissibility} \label{sec:Admissible}
In order to ensure that the relevant holomorphic $(n+1)$--gon counts are finite, we will need to prove that our sutured Heegaard multi-diagrams can be made admissible in a suitable sense.

\begin{definition} A sutured Heegaard multi-diagram is {\em admissible} if every non-trivial $(n+1)$--periodic domain has both positive and negative coefficients.
\end{definition}

\begin{remark}  To count holomorphic $(n+1)$--gons representing a particular (equivalence class of) $\mathfrak{s} \in \mbox{Spin}^c(X,Z)$, one needs a slightly more involved notion of admissibility.  In particular, one needs to arrange that each periodic domain which is a sum of doubly-periodic domains has some local multiplicity which is sufficiently large.  

More precisely, let $\mathfrak{S}$ denote an equivalence class in Spin$^c(X,Z)$, where $\mathfrak{s} \sim \mathfrak{s'}$ if, for each complete splitting of $X_{\eta^0, \ldots \eta^n}$ along imbedded $Y_{\eta^{i_1},\eta^{i_2}}$'s into triangular cobordisms $X_{\eta^{i_1},\eta^{i_2},\eta^{i_3}}$ (see Figure \ref{fig:TriangCob}), we have \[\mathfrak{s}|_{X_{\eta^{i_1},\eta^{i_2},\eta^{i_3}}} = \mathfrak{s}'|_{X_{\eta^{i_1},\eta^{i_2},\eta^{i_3}}}.\]

\begin{figure}
\begin{center}
\input{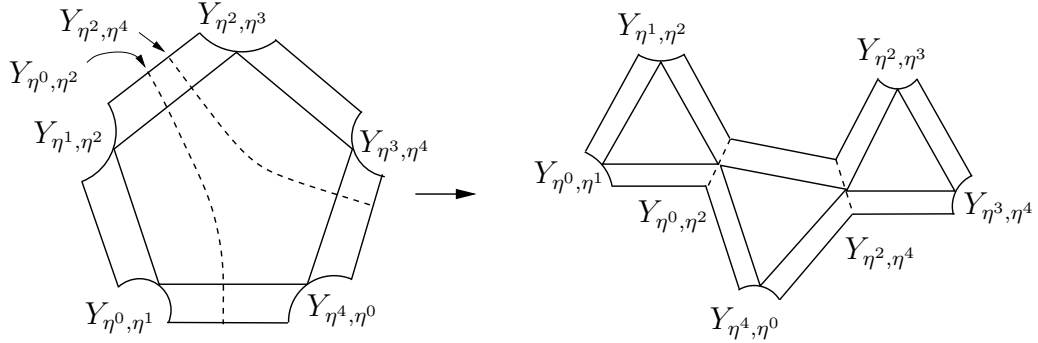}
\end{center}
\caption{Splitting of $X$ along embedded $3$--manifolds into a union of triangular cobordisms.}
\label{fig:TriangCob}
\end{figure}

We then say that a sutured Floer multi-diagram is {\em strongly admissible} for the equivalence class $\mathfrak{S}$ if for each $\mathfrak{s} \in \mathfrak{S}$ and each non-trivial $n$--periodic domain $\mathcal{P}$ which can be written as a sum of doubly-periodic domains:
\[ \mathcal{P} = \sum_{\{\boldeta^{i_1},\boldeta^{i_2}\}
\subset \{\boldeta^0, \ldots, \boldeta^n\}} \mathcal{D}_{\eta^{i_1},\eta^{i_n}}\]
with the property that
\[ \sum \langle c_1(\mathfrak{s}|_{Y_{\eta^{i_1},\eta^{i_2}}}), H(\mathcal{D}_{\eta^{i_1},\eta^{i_2}}) \rangle = 2n \geq 0,\]
it follows that some local multiplicity of $\mathcal{P}$ is strictly greater than $n$.

In the situation of interest in the present work, we will only be interested in strong $\mathfrak{S}$-admissibility for Spin$^c$ equivalence classes whose representatives satisfy
\[ \langle c_1(\mathfrak{s}|_{Y_{\eta^{i_1},\eta^{i_2}}}), H(\mathcal{D}_{\eta^{i_1},\eta^{i_2}}) \rangle = 0\]
for all $Y_{\eta^{i_1},\eta^{i_2}}$.  This is because each of the $3$--manifolds $Y_{\eta^{i_1},\eta^{i_2}}$ for which $SFH(Y_{\eta^{i_1},\eta^{i_1}}) \neq 0$ in the application of the link surgeries spectral sequence used to prove Theorem \ref{thm:SpecSeq} is of the form \[Y \# n(S^1 \times S^2)\] where $Y = F_{g,b} \times I$ for some surface $F_{g,b}$ of genus $g$ with $b$ boundary components (see Section \ref{sec:SFFunctor}).  This in particular implies that its sutured Floer homology is supported in the unique Spin$^c$ structure whose $c_1$ evaluates to $0$ on every doubly-periodic domain.

This observation allows us to use the less restrictive definition of admissibility defined above.
\end{remark}

\begin{lemma}
Every balanced, sutured multi-diagram $(\Sigma, \boldeta^0, \ldots, \boldeta^n)$ is isotopic to one which is admissible in the above sense.
\end{lemma}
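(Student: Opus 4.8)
The plan is to adapt the standard ``winding'' argument by which Ozsv\'ath--Szab\'o make (multi-)diagrams admissible (\cite[Sections~5 and 8]{MR2113019}; see also its sutured incarnation in \cite[Section~3]{MR2253454}) to the present setting of balanced sutured multi-diagrams with boundary.

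First I would reduce the problem to a statement about boundaries of periodic domains. Since $\Sigma$ has no closed components and every domain is required to have multiplicity zero in the regions meeting $\partial\Sigma$, a periodic domain whose boundary --- necessarily a $\Z$--linear combination of the $\eta^i_j$ --- vanishes must itself be zero; equivalently, the map assigning to a non-trivial $(n+1)$--periodic domain its tuple of boundary coefficients on the $\eta^i_j$ is injective. This is exactly the injectivity of the boundary homomorphisms $H_2(U_i,\Sigma;\Z)\to H_1(\Sigma;\Z)$ already used in the proof of Proposition~\ref{prop:PerDomains}. In particular the group $\Pi$ of $(n+1)$--periodic domains is free abelian of finite rank, and every non-trivial $\mathcal{P}\in\Pi$ has nonzero coefficient on at least one curve $\eta^i_j$.

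Next I would perform the winding. Enumerate all the attaching curves as $\eta_1,\ldots,\eta_M$. For each $s$, choose a simple closed curve $\gamma_s\subset\mathrm{Int}(\Sigma)$ that meets $\eta_s$ transversally in one point, is disjoint from $\partial\Sigma$, is disjoint from the remaining curves in the same tuple $\boldeta^i$ as $\eta_s$, and is disjoint from $\gamma_t$ for $t\ne s$; such a curve exists because $\eta_s$ is non--separating and the curves of each $\boldeta^i$ are linearly independent in $H_1(\Sigma)$, with genericity taking care of the mutual disjointness of the $\gamma_s$. Note that $\gamma_s$ is allowed to cross curves in the \emph{other} tuples: this only contributes bounded constant offsets to the local--multiplicity count below. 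Now pick integers $1\ll t_1\ll t_2\ll\cdots\ll t_M$ growing fast enough and isotope each $\eta_s$ by winding it $t_s$ times along $\gamma_s$. For a non-trivial periodic domain $\mathcal{P}$ with $\eta_s$--boundary--coefficient $c_s$, winding along $\gamma_s$ creates a nested family of new regions in which $\mathcal{P}$ realizes an arithmetic progression of local multiplicities of common difference $c_s$ and length of order $t_s$; letting $s^\ast$ be the largest index with $c_{s^\ast}\ne 0$ and taking $t_{s^\ast}$ large compared with the (bounded) contribution of the earlier windings $\gamma_1,\ldots,\gamma_{s^\ast-1}$, this progression --- together with the fact that $\mathcal{P}$ has multiplicity zero near $\partial\Sigma$ --- is forced to take both a positive and a negative value, so $\mathcal{P}$ has coefficients of both signs. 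The wound multi-diagram is therefore admissible; and since every move performed was an isotopy, it is still a balanced sutured multi-diagram (each $\boldeta^i$ remains a pairwise disjoint, $H_1$--independent family) representing the same manifolds.

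The step I expect to require the most care --- the genuinely new point compared with the closed, two--tuple situation --- is the simultaneous construction of the dual curves $\gamma_s$: one must keep $\gamma_s$ disjoint from the rest of its own tuple (so that winding stays an isotopy of a legitimate sutured multi-diagram) while permitting it to meet the other tuples, and one must verify that the bookkeeping of local multiplicities under winding, with the multiplicity--zero condition near $\partial\Sigma$ replacing the basepoint condition, goes through exactly as in \cite[Section~5]{MR2113019}. The remainder is a routine transcription of the closed--manifold argument.
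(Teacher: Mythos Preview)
Your approach is genuinely different from the paper's, and as written it has a gap in the crucial ``both signs'' step.

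\textbf{What the paper does.} The paper follows Juh\'asz's sutured admissibility argument and proceeds by induction on the number of tuples. It fixes properly embedded \emph{arcs} $\gamma_1,\ldots,\gamma_l$ linearly independent in $H_1(\Sigma,\partial\Sigma)$ together with oppositely oriented parallel copies $\gamma_j'$, and performs finger isotopies only on the \emph{last} tuple $\boldeta^n$. The point of using arcs is that their endpoints lie on $\partial\Sigma$, where every periodic domain has multiplicity zero; this pins down the starting value of the multiplicity count along each arc as exactly $0$. After positioning a point $z_j$ (resp.\ $z_j'$) on $\gamma_j$ (resp.\ $\gamma_j'$) between the old curves and the new $\boldeta^n$ curves, one reads off $n_{z_j}(\mathcal{P})=e_{n,j}$ and $n_{z_j'}(\mathcal{P})=-e_{n,j}$ directly. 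If all $e_{n,j}=0$ one falls back on the inductive hypothesis.

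\textbf{Where your argument breaks.} You wind along \emph{closed} curves $\gamma_s$ in the interior, so the arithmetic progression of multiplicities in the winding region for $\gamma_{s^\ast}$ has the form $a_0,\,a_0+c_{s^\ast},\,\ldots,\,a_0+t_{s^\ast}c_{s^\ast}$, where the offset $a_0=n_{p_0}(\mathcal{P})$ is the multiplicity just outside that winding region. Your appeal to ``multiplicity zero near $\partial\Sigma$'' does not control $a_0$: the reference point $p_0$ is far from $\partial\Sigma$, and $a_0$ is a linear function of \emph{all} the boundary coefficients $c_1,\ldots,c_M$ of $\mathcal{P}$, with weights that grow like the earlier winding parameters $t_s$ whenever a path from $\partial\Sigma$ to $p_0$ is forced through earlier winding regions. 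Thus ``the bounded contribution of the earlier windings'' is bounded only for a \emph{fixed} $\mathcal{P}$, not uniformly, and no single choice of $t_1\ll\cdots\ll t_M$ forces the progression to change sign for every periodic domain. (Relatedly, ``genericity'' gives transversality of the $\gamma_s$, not the mutual disjointness you assert.)

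\textbf{How to repair it.} Either replace the closed $\gamma_s$ by properly embedded arcs as in the paper (so that the progression is anchored at $0$), or import the full Ozsv\'ath--Szab\'o mechanism: wind along each $\gamma_s$ in \emph{both} directions and invoke the area-form criterion (\cite[Lemmas~4.12 and~5.4]{MR2113019}) rather than the naive ``large $t_{s^\ast}$'' bound. The paper's arc-and-induction approach is the cleaner route in the sutured setting precisely because the boundary supplies the zero anchor that the basepoint supplies in the closed case.
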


\begin{proof}  We proceed by induction on $n$.  We follow the procedure and adopt the notation used in \cite[Prop. 3.15]{MR2253454}, where the base case $n=2$ is proved.  In particular, we assume that we have chosen a set of pairwise disjoint, oriented, properly embedded arcs $\gamma_1, \ldots, \gamma_l$ which are linearly independent in $H_1(\Sigma, \partial\Sigma)$ along with nearby oppositely-oriented parallel curves $\gamma_j'$.

By the induction hypothesis, we may assume that we have constructed an admissible diagram for $(\Sigma, \boldeta^0, \ldots, \boldeta^{n-1})$.  Now introduce the curves $\boldeta^n$ and perform an isotopy of each $\eta^n_j$ in a regular neighborhood of each of $\gamma_j,\gamma_j'$ (in the orientation direction of $\gamma_j,\gamma_j'$) so that there is a point $z_j$ (resp., $z_j'$) on $\gamma_j$ (resp., $\gamma_j'$) which lies after (with respect to the orientation on $\gamma_j$) every other curve in $\boldeta^k$ for $k < n$ and before the given curve of $\boldeta^n$.  Note that if there are several curves of $\boldeta^n$ intersecting a single $\gamma_j$, this isotopy can be accomplished without introducing illegal intersections between them by forcing the finger isotopies to lie in successively smaller regular neighborhoods of $\gamma_i$.  See Figure \ref{fig:Admissible} for an illustration.

\begin{figure}
\begin{center}
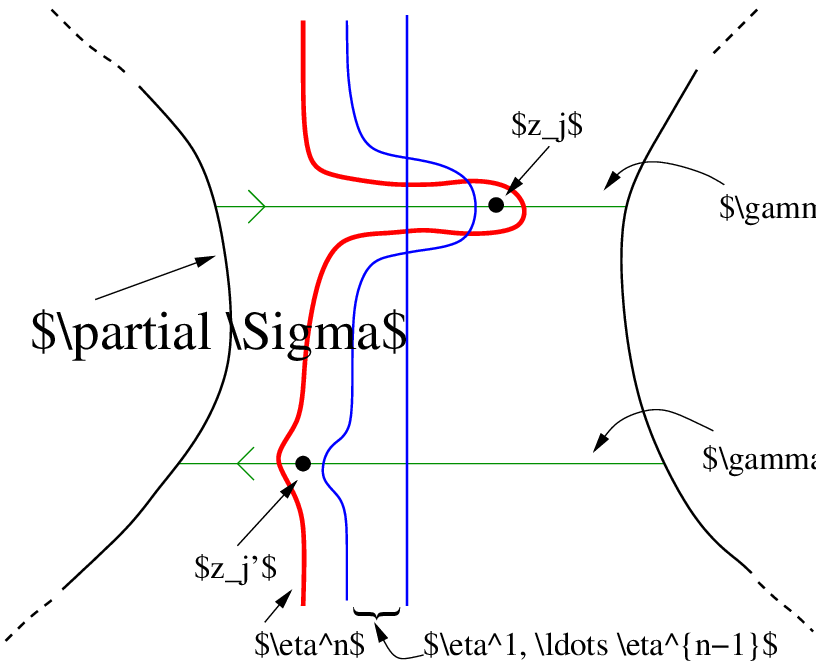
\end{center}
\caption{Winding to achieve an admissible sutured Heegaard multi-diagram.}
\label{fig:Admissible}
\end{figure}

Now let $\cP$ be an $(n+1)$--periodic domain, with 
\[\partial\mathcal{P} = \sum_{i,j} e_{i,j} \eta^i_j.\]
Then if $e_{n,j} \neq 0$ for some $j$, then $n_{z_j}(\mathcal{P}) = e_{i,j}$ and $n_{z_j'}(\mathcal{P}) = -e_{i,j}$, by construction, so $\mathcal{P}$ has both positive and negative coefficients.  If $e_{n,j} = 0$ for all $j$, then $\mathcal{P}$ is an $n$--periodic domain for $(\Sigma, \boldeta^0, \ldots, \boldeta^{n-1})$, which, by induction, has both positive and negative coefficients when non-trivial.
\end{proof}

\begin{definition} 
Borrowing notation from Section 3 of \cite{MR2253454}, let $D(\Sigma, \boldeta^0,\ldots, \boldeta^n)$ denote the set of domains ($2$--chains) in $\Sigma$ with boundary contained in the $\boldeta^i$ curves.  Note that every such domain can be written as a $\Z$--linear combination of the closures of the connected components of \[\Sigma - \left(\bigcup \boldeta^0 \cup \ldots \cup \bigcup \boldeta^n\right),\] which we call {\em elementary domains}.

Let $\mathcal{D} \in D(\Sigma, \boldeta^0, \ldots, \boldeta^n)$.  We say $\mathcal{D}$ is a {\em positive domain}, if it is a $\mathbb{Z}_{\geq 0}$--linear combination of the elementary domains.

For \[({\bf x}_0, \ldots, {\bf x}_n) \in (\mathbb{T}_{\eta^n} \cap \mathbb{T}_{\eta^0}) \times \ldots \times (\mathbb{T}_{\eta^{n-1}} \cap \mathbb{T}_{\eta^n}),\] let $D({\bf x}_0, \ldots, {\bf x}_n)$ denote the set of domains representing homotopy classes in $\pi_2({\bf x}_0, \ldots, {\bf x}_n)$.  
\end{definition}

\begin{proposition} If $(\Sigma, \boldeta^0, \ldots, \boldeta^n)$ is admissible, then for every \[({\bf x}_0, \ldots, {\bf x}_n) \in (\mathbb{T}_{\eta^n} \cap \mathbb{T}_{\eta^0}) \times \ldots \times (\mathbb{T}_{\eta^{n-1}} \cap \mathbb{T}_{\eta^n}),\]
the set $\{\mathcal{D} \in D({\bf x}_0, \ldots, {\bf x}_n): \mathcal{D} \mbox{ is a positive domain.}\}$ is finite.
\end{proposition}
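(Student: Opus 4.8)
The plan is to reduce the assertion to a lattice-point count inside a bounded polyhedron, with admissibility entering exactly at the point where one needs to know the polyhedron in question is bounded.

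If there are no positive domains in $D({\bf x}_0, \ldots, {\bf x}_n)$ there is nothing to prove, so fix one, say $\mathcal{D}_0$. Enumerate the elementary domains as $\mathcal{E}_1, \ldots, \mathcal{E}_N$ and record each element of $D(\Sigma, \boldeta^0, \ldots, \boldeta^n)$ by its coefficient vector in $\Z^N$; a domain is positive exactly when this vector lies in $(\Z_{\geq 0})^N$. When $D({\bf x}_0, \ldots, {\bf x}_n)$ is non-empty it is a coset $\mathcal{D}_0 + \Pi$ of the group $\Pi \subset \Z^N$ of $(n+1)$--periodic domains, since the difference of any two of its elements is, by inspection of boundaries, an $(n+1)$--periodic domain, and conversely $\mathcal{D}_0$ plus such a domain again lies in $D({\bf x}_0, \ldots, {\bf x}_n)$ (compare Proposition \ref{prop:AffineIdent}). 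Hence the positive domains in $D({\bf x}_0, \ldots, {\bf x}_n)$ are precisely the $\mathcal{D}_0 + \mathcal{P}$ with $\mathcal{P}$ running over $S := \{\mathcal{P} \in \Pi \;:\; \mathcal{D}_0 + \mathcal{P} \geq 0\}$, where $\geq$ is meant coefficientwise, and it suffices to prove that $S$ is finite.

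Next I would move to the reals. Let $V := \Pi \otimes \R \subset \R^N$, a subspace spanned by a $\Z$--basis of $\Pi$ and hence cut out by rational linear equations, and let $C := V \cap (\R_{\geq 0})^N$. Admissibility is precisely the statement that $C = \{0\}$: $C$ is a rational polyhedral cone, so were it nonzero it would contain a nonzero rational point, and clearing denominators would yield a nonzero integer vector in $V \cap (\R_{\geq 0})^N$; but an integral linear combination of $(n+1)$--periodic domains is again one, so this vector would be a nontrivial $(n+1)$--periodic domain with no negative coefficient, contradicting admissibility. Now consider the polyhedron $Q := V \cap \{x \in \R^N \;:\; x \geq -\mathcal{D}_0\}$. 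It is non-empty, since $\mathcal{D}_0 \geq 0$ forces $0 \in Q$, and its recession cone is $V \cap (\R_{\geq 0})^N = C = \{0\}$. A non-empty polyhedron with trivial recession cone is compact, so $Q$ contains only finitely many integer points; in particular $S = \Pi \cap Q$ is finite, and therefore so is the set $\{\mathcal{D}_0 + \mathcal{P} : \mathcal{P} \in S\}$ of positive domains in $D({\bf x}_0, \ldots, {\bf x}_n)$.

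The one step I expect to require care is the deduction that the real cone $C$ is trivial: the admissibility hypothesis is a statement about the lattice $\Pi$ of integral periodic domains, so one must invoke that $C$, being defined by rational equalities and inequalities, automatically contains a nonzero lattice point whenever it contains any nonzero vector, before the hypothesis can be applied. The remaining ingredient — that a non-empty polyhedron is bounded if and only if its recession cone is $\{0\}$ — is standard convex geometry. One could instead run a sequential compactness argument on a putative infinite family of positive domains, extracting a limiting coefficientwise-nonnegative direction in $V$, but the crux (passing from that limiting real direction to an honest nonnegative $(n+1)$--periodic domain) is the same rationality point.
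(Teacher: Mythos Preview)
Your argument is correct and is essentially the standard one the paper invokes: the paper's own proof simply refers to \cite[Lem.~4.13]{MR2113019} and \cite[Lem.~3.14]{MR2253454}, which carry out exactly this ``positive domains differ by periodic domains, and admissibility makes the relevant polytope bounded'' reduction. Your explicit discussion of why a nonzero real direction in $C$ produces a nonzero lattice point in $\Pi$ (using rationality of the defining conditions) is precisely the point that is usually left implicit in those references, so your write-up is in fact a careful unpacking of the cited argument rather than a different approach.
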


\begin{proof}
The proof follows exactly as in \cite[Lem. 4.13]{MR2113019} (see also \cite[Lem. 3.14]{MR2253454}).
\end{proof}

\begin{corollary}
If $(\Sigma, \boldeta^0, \ldots, \boldeta^n)$ is admissible, then for every \[({\bf x}_0, \ldots, {\bf x}_n) \in (\mathbb{T}_{\eta^n} \cap \mathbb{T}_{\eta^0}) \times \ldots \times (\mathbb{T}_{\eta^{n-1}} \cap \mathbb{T}_{\eta^n}),\]
the set
\[\{\mathcal{M}(\phi)\,|\,\phi \in \pi_2({\bf x}_0, \ldots, {\bf x}_n), \mu(\phi) = 0\}\]
is finite.
\end{corollary}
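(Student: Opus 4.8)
The plan is to deduce this directly from the previous Proposition, which asserts that the positive domains representing homotopy classes in $\pi_2({\bf x}_0, \ldots, {\bf x}_n)$ form a finite set, together with the standard compactness and transversality package for moduli spaces of holomorphic polygons, which by the earlier parts of Sections~\ref{sec:StandardDefn} and \ref{sec:MultiDiag} is available in the sutured setting. The combinatorial heart of the matter has already been handled by admissibility; what remains is purely analytic.

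First I would observe that a class $\phi \in \pi_2({\bf x}_0, \ldots, {\bf x}_n)$ with $\mathcal{M}(\phi) \neq \emptyset$ is necessarily represented by a \emph{positive} domain. Passing to Lipshitz's cylindrical picture \cite{MR2240908}, whose validity in the sutured setting was established in the proof of Proposition~\ref{prop:MasIndex}, a $J$--holomorphic representative is a map $u \colon S \to W = \widehat{\Sigma} \times P_{n+1}$, and for a point $p$ in a component of $\Sigma - \bigcup\boldeta^0 - \ldots - \bigcup\boldeta^n$ the local multiplicity at $p$ of the domain $\mathcal{D} = (\pi_{\widehat{\Sigma}} \circ u)(S)$ equals the intersection number of $u$ with the $J$--holomorphic surface $\{p\} \times P_{n+1}$. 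Positivity of intersections of $J$--holomorphic curves in the $4$--manifold $W$ forces this number to be $\geq 0$, so $\mathcal{D} \in D({\bf x}_0, \ldots, {\bf x}_n)$ is a positive domain. By the previous Proposition there are therefore only finitely many classes $\phi$ (of any Maslov index) with $\mathcal{M}(\phi) \neq \emptyset$; in particular only finitely many with $\mu(\phi) = 0$.

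Next I would check that each of these finitely many moduli spaces $\mathcal{M}(\phi)$ with $\mu(\phi) = 0$ is itself a finite set. For a generic (family of) almost--complex structure(s) $\mathcal{M}(\phi)$ is a smooth manifold, whose dimension is $\mu(\phi) = 0$ by the index computation of Proposition~\ref{prop:MasIndex}, hence a discrete set; and it is compact by Gromov compactness. Sphere bubbling is excluded because $\pi_2(Sym^d(\Sigma)) \cong 0$, which was shown in the proof of Proposition~\ref{prop:AffineIdent} for $d > 2$ (we may always stabilize $\widehat{\Sigma}$ to arrange $d > 2$); breaking of the polygon into lower components, boundary degenerations, and disk bubbling are excluded for generic $J$ by the usual transversality argument, since any such configuration would contain a piece of negative expected dimension, exactly as in the closed case, and positivity together with the zero--intersection condition at the capping points $z_i$ keeps all curves away from the capping disks.

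Combining the two steps, $\bigsqcup_{\{\phi \,:\, \mu(\phi) = 0\}} \mathcal{M}(\phi)$ is a finite union of finite sets, so in particular the collection $\{\mathcal{M}(\phi) : \phi \in \pi_2({\bf x}_0, \ldots, {\bf x}_n),\ \mu(\phi) = 0\}$ is finite, as claimed. The only genuine obstacle is the compactness input of the second step: one needs to know that Lipshitz's cylindrical reformulation and the attendant transversality and Gromov--compactness results go through verbatim for Heegaard diagrams with boundary — but this is precisely what the constructions of Section~\ref{sec:MultiDiag} (and the verification in Proposition~\ref{prop:MasIndex}) have set up.
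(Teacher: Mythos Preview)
Your proof is correct and takes the same approach as the paper: intersection positivity forces any $\phi$ with $\mathcal{M}(\phi)\neq\emptyset$ to be represented by a positive domain, whence the preceding Proposition gives the finiteness. The paper's own proof is a single sentence to exactly this effect; your second step (that each individual $\mathcal{M}(\phi)$ with $\mu(\phi)=0$ is itself a finite set) goes beyond what the Corollary as literally stated requires---finitely many nonempty $\mathcal{M}(\phi)$ already makes the set $\{\mathcal{M}(\phi)\}$ finite---though it is of course what is ultimately needed to make the polygon maps well-defined.
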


\begin{proof}
By intersection positivity, non-trivial holomorphic $(n+1)$ gons must be represented by domains with positive coefficients.
\end{proof}

\subsection{Moduli Spaces of Polygons and Associativity}
As is the case in Heegaard Floer homology (see \cite[Sec. 8]{MR2113019} and \cite[Sec. 4]{MR2141852}), counts of holomorphic maps $P_{n+1} \rightarrow Sym^d(\Sigma)$ can be used to define maps between chain complexes associated to sutured Heegaard multi-diagrams.  In particular, let $(\Sigma, \boldeta^0, \ldots, \boldeta^n)$ be a sutured Heegaard multi-diagram, where each $\boldeta^i = \{\eta^i_1, \ldots, \eta^i_d\}$.  Then we define:
\[f_{\eta^0, \ldots, \eta^n}: \bigotimes_{i=1}^n CFH(Y_{\eta^{i-1},\eta^{i}}) \rightarrow CFH(Y_{\eta^{0},\eta^{n}}).\]

As usual, the map involves counting holomorphic maps
\[\phi: P_{n+1} \rightarrow Sym^d(\Sigma)\]

with appropriate boundary conditions.

Specifically, let ${\bf x}_i \in \mathbb{T}_{\eta^{i-1}} \cap \mathbb{T}_{\eta^i}$ for $i \in 1, \ldots n$.  Then
\[f_{\eta^0, \ldots, \eta^n}({\bf x}_1 \otimes \ldots \otimes {\bf x}_n) := \left\{\begin{array}{cc}
\sum_{{\bf y} \in \mathbb{T}_{\eta^0} \cap \mathbb{T}_{\eta^n}} \sum_{\{\phi \in \pi_2({\bf y},{\bf x}_1, \ldots,{\bf x}_n)|\mu(\phi) = 0\}} \mathcal{M}(\phi) \cdot {\bf y} & \mbox{when $n > 1$,}\\
\sum_{{\bf y} \in \mathbb{T}_{\eta^0} \cap \mathbb{T}_{\eta^n}} \sum_{\{\phi \in \pi_2({\bf y},{\bf x}_1, \ldots,{\bf x}_n)|\mu(\phi) = 1\}} \widehat{\mathcal{M}}(\phi) \cdot {\bf y} & \mbox{when $n = 1$,}
  \end{array}\right.\]

Here, $\pi_2({\bf y}, {\bf x}_1, \ldots, {\bf x}_n)$ denotes the set of homotopy classes of Whitney $(n+1)$--gons connecting $({\bf y}, {\bf x}_1, \ldots {\bf x}_n)$ in the sense of \cite[Sec. 8.1.2]{MR2113019}, $\mathcal{M}(\phi)$ (resp., $\widehat{\mathcal{M}}(\phi)$) denotes the moduli space of holomorphic representatives of $\phi$ (resp., the moduli space, quotiented by the natural $\R$ action), and $\mu(\phi)$ denotes the Maslov index of $\phi$ (expected dimension of $\mathcal{M}(\phi)$).  
As in \cite[Sec. 4]{MR2141852}, these maps can be seen to satisfy a generalized associativity property, by examining the ends of $1$--dimensional moduli spaces of holomorphic $(n+1)$--gons:
\[\sum_{0\leq i < j \leq n} f_{\eta^i, \ldots, \eta^j}(f_{\eta^0, \ldots, \eta^i,\eta^j,\ldots, \eta^n}(-) \otimes (-)) = 0.\]

This polygon associativity property can be used to prove, in the sutured Floer homology context, that the map $D$ arising in the iterated mapping cone described in Section \ref{sec:LinkSurg} indeed satisfies $D^2 = 0$.  To prove this, one needs to show that certain terms in the sum above (corresponding to certain ends of $1$--dimensional moduli spaces) yield the zero map in our situation.  





\subsection{$\Lambda^*H_1$-action} \label{sec:H1Action}
As in \cite[Sec. 4.2.5]{MR2113019}, the sutured Floer chain complex admits an action of $\Lambda^*(H_1(Y,\partial Y;\Z)/\mbox{Tors})$.  A good understanding of this action will allow us to prove that the Khovanov differential matches the $D_1$ differential on the iterated mapping cone in Section \ref{sec:LinkSurg}.  Throughout Subsections \ref{sec:H1Action} and \ref{sec:H1action4man}, all (co)homology groups, where unspecified, are taken with $\Z$ coefficients.  Furthermore, let $H_*^\circ(-)$ denote $H_*(-;\Z)/\mbox{Tors}$ and $Hom(-)$ denote $Hom(-,\Z)$.

\begin{proposition} \label{H1ActionY} Let $(\Sigma,\boldalpha,\boldbeta)$ be a balanced sutured Heegaard diagram representing $(Y,\Gamma)$ with $d = |\boldalpha| = |\boldbeta| > 2$.  There is an action of $H^1(\Omega(\mathbb{T}_\alpha,\mathbb{T}_\beta)) \cong H^\circ_1(Y,\partial Y)$ on $SFH(Y)$ which lowers homological degree by $1$.  Furthermore, this action descends to give a well-defined action of the exterior algebra, $\Lambda^*(H^\circ_1(Y,\partial Y))$.
\end{proposition}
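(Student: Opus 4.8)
I would follow the template of Ozsváth–Szabó's original construction of the $\Lambda^*H_1$-action on $\widehat{HF}$ \cite[Sec. 4.2.5]{MR2113019}, checking at each step that the sutured hypotheses ($\Sigma$ with boundary, $d = |\boldalpha| = |\boldbeta|$, balanced) do not obstruct the argument. The key geometric input is that the loop space $\Omega(\mathbb{T}_\alpha, \mathbb{T}_\beta)$ of paths in $Sym^d(\Sigma)$ from $\mathbb{T}_\alpha$ to $\mathbb{T}_\beta$ carries a free homotopy class of maps from any given generator, and that each $1$-cocycle $\zeta \in H^1(\Omega)$ defines an operator on $CFH(Y)$ by counting, for each flow line $\phi \in \pi_2({\bf x},{\bf y})$ with $\mu(\phi)=1$, the evaluation of $\zeta$ on the image of $\widehat{\mathcal{M}}(\phi)$ inside $\Omega$. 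First I would establish the identification $H^1(\Omega(\mathbb{T}_\alpha,\mathbb{T}_\beta)) \cong H_1^\circ(Y,\partial Y)$: since $\pi_1(Sym^d(\Sigma)) \cong H_1(\Sigma)$ (abelian, because $Sym^d$ of a space with free $\pi_1$ has abelian $\pi_1$ once $d>1$), and since $\pi_2(Sym^d(\Sigma)) \cong 0$ by the computation already carried out in the proof of Proposition~\ref{prop:AffineIdent}, the relevant piece of the homotopy exact sequence for the path fibration collapses, and one obtains $H^1(\Omega) \cong H^1$ of the homotopy fiber $\cong \mathrm{coker}(H_1(\mathbb{T}_\alpha)\oplus H_1(\mathbb{T}_\beta) \to H_1(Sym^d\Sigma))$ modulo torsion, which by the Mayer–Vietoris computation in Proposition~\ref{prop:H1andH2} (applied to $H_1$ rather than $H_2$) is exactly $H_1^\circ(Y,\partial Y)$.

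Next I would define the operator. Fix a $1$-cocycle $\zeta$ representing a class in $H^1(\Omega)$. For ${\bf x}, {\bf y}$ generators and $\phi \in \pi_2({\bf x},{\bf y})$ with $\mu(\phi)=1$, the moduli space $\widehat{\mathcal{M}}(\phi)$ is a compact oriented (here, over $\Z_2$, just a finite) $0$-manifold by the admissibility and finiteness results established earlier (Proposition~\ref{prop:MasIndex} for the index, and the admissibility discussion guaranteeing compactness); each point gives a loop in $\Omega$, and we set
\[
A_\zeta({\bf x}) \;=\; \sum_{{\bf y}} \;\sum_{\{\phi \in \pi_2({\bf x},{\bf y})\,:\,\mu(\phi)=1\}} \Big(\sum_{u \in \widehat{\mathcal{M}}(\phi)} \langle \zeta, [u]\rangle\Big)\, {\bf y}.
\]
To see $A_\zeta$ is a chain map (up to the usual sign/degree conventions it anticommutes with $\partial$), I would examine the ends of the $1$-dimensional moduli spaces $\mathcal{M}(\phi)$ with $\mu(\phi)=2$: as in the closed case, each such space is a compact $1$-manifold whose boundary consists of broken flows $\phi_1 * \phi_2$, and the signed (mod $2$) count of $\langle \zeta, \cdot\rangle$ over the boundary, together with the fact that $\zeta$ is a cocycle on $\Omega$ (so its evaluation on the boundary of an arc in $\Omega$ vanishes), forces $A_\zeta \partial + \partial A_\zeta = 0$. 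That the induced map on homology depends only on $[\zeta] \in H^1(\Omega)$ follows because a coboundary $\zeta = \delta\xi$ produces $A_\zeta$ chain-homotopic to zero via the operator that evaluates $\xi$ on the $0$-dimensional moduli spaces. The homological degree drop by $1$ is immediate from the Maslov grading of the differential.

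For the exterior-algebra statement I would show $A_\zeta A_{\zeta'} + A_{\zeta'} A_\zeta$ is chain-homotopic to zero (so in particular $A_\zeta^2 \simeq 0$), again by a standard degeneration argument: the operator counting the product structure on pairs of flow lines, applied to the evaluation of $\zeta \wedge \zeta'$ (or rather the product cocycle) over $1$-dimensional moduli spaces, provides the homotopy; the antisymmetrization on the nose comes from the graded-commutativity of cup product in $H^*(\Omega)$ restricted to degree-$1$ classes. This descends the $H^1(\Omega)$-module structure to a $\Lambda^*\big(H^1(\Omega)\big) \cong \Lambda^*\big(H_1^\circ(Y,\partial Y)\big)$ action on $SFH(Y)$, as claimed.

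\textbf{Main obstacle.} The genuinely new point — everything else is a transcription of \cite[Sec.~4.2.5]{MR2113019} — is verifying that the topology of $\Omega(\mathbb{T}_\alpha,\mathbb{T}_\beta)$ inside $Sym^d(\Sigma)$ behaves as in the closed case when $\Sigma$ has boundary and $d \ne g(\widehat{\Sigma})$. The key facts, namely $\pi_2(Sym^d\Sigma) = 0$ for $d>2$ (already proved in Proposition~\ref{prop:AffineIdent}) and the $H_1$ identification (Proposition~\ref{prop:H1andH2}), are available, so the hypothesis $d>2$ in the statement is exactly what makes the loop-space homotopy sequence collapse; the mild remaining care is simply to confirm that Ozsváth–Szabó's compactness and gluing arguments for $\mu(\phi)=1,2$ moduli spaces, which they run in the closed setting, go through verbatim under Juhász's admissibility hypotheses (they do — this is the same compactness already invoked for the sutured differential itself).
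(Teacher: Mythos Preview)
Your overall strategy matches the paper's: define $A_\zeta$ by the same formula, invoke \cite[Lem.~4.18, 4.19, Prop.~4.17]{MR2113019} for the chain-map, well-definedness, and nilpotence properties, and use $\pi_2(Sym^d(\Sigma)) = 0$ (from Proposition~\ref{prop:AffineIdent}) together with the homotopy exact sequence of the path fibration to identify $H^1(\Omega(\mathbb{T}_\alpha,\mathbb{T}_\beta))$. That part is fine.

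The identification step, however, is wrong. The homotopy exact sequence
\[
0 \cong \pi_2(Sym^d(\Sigma)) \longrightarrow \pi_1(\Omega(\mathbb{T}_\alpha,\mathbb{T}_\beta)) \longrightarrow \pi_1(\mathbb{T}_\alpha \times \mathbb{T}_\beta) \xrightarrow{\ i\ } \pi_1(Sym^d(\Sigma))
\]
identifies $\pi_1(\Omega)$ with the \emph{kernel} of $i$, not the cokernel. Under $\pi_1(Sym^d(\Sigma)) \cong H_1(\Sigma)$ this kernel is $\mathrm{Ker}\bigl(\mathrm{Span}([\boldalpha,\boldbeta]) \to H_1(\Sigma)\bigr)$, which by Proposition~\ref{prop:H1andH2} is $H_2(Y)$; dualizing then gives $H^1(\Omega) \cong \mathrm{Hom}(H_2(Y),\Z) \cong H_1^\circ(Y,\partial Y)$ via Poincar\'e--Lefschetz duality and universal coefficients. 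The cokernel you wrote down is instead $H_1(\Sigma)/\mathrm{Span}([\boldalpha,\boldbeta]) \cong H_1(Y)$ (read off from the handle decomposition of $Y$), and this is \emph{not} $H_1^\circ(Y,\partial Y)$ in general --- for instance when $Y = S^3 - K$ one has $H_1(Y) \cong \Z$ while $H_1(Y,\partial Y) = 0$. So your appeal to ``Proposition~\ref{prop:H1andH2} applied to $H_1$ rather than $H_2$'' lands on the wrong group, and the claimed isomorphism $H^1(\Omega) \cong H_1^\circ(Y,\partial Y)$ is not established by your argument.
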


Here, $\Omega(\mathbb{T}_\alpha \cap \mathbb{T}_\beta)$ denotes the space of paths in $Sym^d(\Sigma)$ which begin on $\mathbb{T}_\alpha$ and end on $\mathbb{T}_\beta$.  We stabilize $\Sigma$, if necessary, to achieve $d > 2$ for any $(Y,\Gamma)$.

\begin{proof}
Suppose $\zeta \in Z^1(\Omega(\mathbb{T}_\alpha,\mathbb{T}_\beta))$ is a cocycle in $\Omega(\mathbb{T}_\alpha,\mathbb{T}_\beta)$.  Then for ${\bf x} \in \mathbb{T}_\alpha \cap \mathbb{T}_\beta$, the action is defined by
\begin{equation}
\label{eqn:H1Action}A_\zeta({\bf x}) = \sum_{{\bf y} \in \mathbb{T}_\alpha \cap \mathbb{T}_\beta}\sum_{\{\phi \in \pi_2({\bf x},{\bf y})\,|\,\mu(\phi) = 1\}} \zeta(\phi)\cdot\left(\#\widehat{\mathcal{M}}(\phi)\right) \, {\bf y},
\end{equation}
where we are viewing $\phi$ as a (homotopy class of) $1$-chain in $\Omega(\mathbb{T}_\alpha, \mathbb{T}_\beta)$ and, hence, $\zeta(\phi)$ is well-defined.
The proofs that 

\begin{enumerate}
\item $A_\zeta$ is a chain map, hence induces a well-defined map on homology,
\item the induced map on homology associated to $A_\zeta$ depends only upon the cohomology class of $\zeta$, hence provides a well-defined action of $H^1(\Omega(\mathbb{T}_\alpha,\mathbb{T}_\beta))$,
\item $A_\zeta \circ A_\zeta$ is the zero map on homology, hence we have a well-defined action of $\Lambda^*(H^1(\Omega(\mathbb{T}_\alpha,\mathbb{T}_\beta)))$ on $SFH(Y)$, 
\end{enumerate}

follow without change as in the proofs of Lemma 4.18 and 4.19 and Proposition 4.17 of \cite{MR2113019} by examining ends of $1$-dimensional moduli spaces.

To understand why $H^\circ_1(Y,\partial Y) \cong H^1(\Omega(\mathbb{T}_\alpha, \mathbb{T}_\beta))$ when $d>2$, we use an adaptation of the argument used in the proof of \cite[Prop. 2.15]{MR2113019}.

Namely, we arrive at a homotopy long exact sequence
\[\xymatrix{0 \cong \pi_2(Sym^d(\Sigma)) \ar[r] & \pi_1(\Omega(\mathbb{T}_\alpha,\mathbb{T}_\beta)) \ar[r] & \pi_1(\mathbb{T}_\alpha \times \mathbb{T}_\beta) \ar[r]^i & \pi_1(Sym^d(\Sigma)).}\]
Recall that we proved $\pi_2(Sym^d(\Sigma)) \cong 0$ in the proof of Proposition \ref{prop:AffineIdent}.

Under the identification $\pi_1(Sym^d(\Sigma)) \cong H_1(\Sigma)$ (see Lemma 2.6 and Definition 2.11 of \cite{MR2113019}), $i(\pi_1(\mathbb{T}_\alpha \times \mathbb{T}_\beta))$ corresponds to \[\mbox{Span}([\boldalpha],[\boldbeta]) \subset H_1(\Sigma).\]The above long exact sequence therefore yields the short exact sequence:
\[\xymatrix{0 \ar[r] & \pi_1(\Omega(\mathbb{T}_\alpha,\mathbb{T}_\beta)) \ar[r] & \mbox{Ker}\left[\mbox{Span}([\boldalpha],[\boldbeta]) \rightarrow H_1(\Sigma)\right] \ar[r] & 0}\]
But Proposition \ref{prop:H1andH2} tells us that 
\[\mbox{Ker}\left[\xymatrix{\mbox{Span}([\boldalpha],[\boldbeta]) \ar[r]^{i} & H_1(\Sigma)}\right] \cong H_2(Y).\]
Thus,
\[\pi_1(\Omega(\mathbb{T}_\alpha,\mathbb{T}_\beta)) \cong H_2(Y) \cong H^1(Y,\partial Y).\]
Applying the $Hom(-,\Z)$ functor, we arrive at the desired conclusion:
\[H^1(\Omega(\mathbb{T}_\alpha,\mathbb{T}_\beta)) \cong \mbox{Hom}(H^1(Y,\partial Y)) \cong H^\circ_1(Y,\partial Y).\]
\end{proof}

\begin{remark} Since we are considering sutured Floer homology with $\Ztwo$ coefficients, we will be most interested in the corresponding $\Wedge^*(H^\circ_1(Y,\partial Y)\otimes_\Z \Ztwo)$ action.
\end{remark}

\subsection{Naturality of Triangle Maps}\label{sec:H1action4man}
As before, singular (co)homology groups, where unspecified, will be taken with $\Z$ coefficients, and we will use $H_*^\circ(-)$ to denote $H_*(-;\Z)/\mbox{Tors}$ and $Hom(-)$ denote $Hom(-,\Z)$.

Let $X = X_{\eta^0,\eta^1,\eta^2}$ be the $4$--manifold associated to a sutured Heegaard triple-diagram $(\Sigma,\boldeta^0,\boldeta^1,\boldeta^2)$, with

\begin{enumerate}
\item $Y = \partial(X)$,
\item $Y' = Y_{\eta^0,\eta^1} \cup Y_{\eta^1,\eta^2} \cup - Y_{\eta^0,\eta^2} \subset Y$, and
\item $Z = \overline{Y - Y'}$.
\end{enumerate}

Then (see \cite[Lem. 2.6]{MR2222356}) the map 
\[f_{\eta^0,\eta^1,\eta^2}: SFH(Y_{\eta^0,\eta^1}) \otimes SFH(Y_{\eta^1,\eta^2}) \rightarrow SFH(Y_{\eta^0,\eta^2})\]
induced by counting triangles admits an action of $H^\circ_1(X,Z)$ as follows.  Let $h \in H^\circ_1(X,Z)$, and abbreviate the map $f_{\eta^0,\eta^1,\eta^2}$ by $f$.  Then we claim (and prove during the proof of Proposition \ref{prop:H1action4man}) that $\exists$ \[(h_{01},h_{12},h_{02}) \in \bigoplus_{i \in \Z_3}H^\circ_1(Y_{\eta^i,\eta^{i+1}},\partial Y_{\eta^i,\eta^{i+1}}) \cong H^\circ_1(Y,Z)\] satisfying $i(h_{01},h_{12},h_{02}) = h$ under the inclusion \[i:H^\circ_1(Y,Z) \rightarrow H^\circ_1(X,Z),\] which allows us to define:
\begin{equation}\label{equation:H1action4man}
(h\circ f)(\xi_{01} \otimes \xi_{12}):= f((h_{01} \circ \xi_{01}) \otimes \xi_{12}) + f(\xi_{01} \otimes (h_{12} \circ \xi_{12})) + h_{02} \circ f(\xi_{01} \otimes \xi_{12}).
\end{equation}

\begin{proposition} \label{prop:H1action4man}
The $H^\circ_1(X,Z)$-action defined in Equation \ref{equation:H1action4man} yields a well-defined map
\[f:\Lambda^*(H^\circ_1(X,Z)) \otimes SFH(Y_{\eta^0,\eta^1}) \otimes SFH(Y_{\eta^1,\eta^2}) \rightarrow SFH(Y_{\eta^0,\eta^2}).\]
\end{proposition}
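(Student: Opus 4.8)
The plan is to mirror the proof of \cite[Lem.~2.6]{MR2222356} (compare \cite[Sec.~4.2.5]{MR2113019}), checking at each stage that the argument survives passage to the sutured setting, where $\Sigma$ has nonempty boundary. Four things must be verified: (i) every $h\in H^\circ_1(X,Z)$ admits a decomposition $(h_{01},h_{12},h_{02})$ as in the statement; (ii) the operator of Equation~\ref{equation:H1action4man} is, on homology, independent of this choice; (iii) for fixed cocycle representatives it is a chain map whose induced map on $SFH$ depends only on the cohomology classes; and (iv) the resulting action factors through $\Lambda^*(H^\circ_1(X,Z))$. For (i) I would first record the needed topology: excision applied to $Y=Y'\cup Z$ (using that the three pieces $Y_{\eta^i,\eta^{i+1}}$ are glued to one another only along subsurfaces lying in $Z$) gives the natural identification $\bigoplus_{i\in\Z_3}H^\circ_1(Y_{\eta^i,\eta^{i+1}},\partial Y_{\eta^i,\eta^{i+1}})\cong H^\circ_1(Y,Z)$, and the homology long exact sequence of the triple $(X,Y,Z)$ together with the vanishing $H_1(X,Y;\Z)=0$ (proved by excision in the proof of Proposition~\ref{prop:PerDomains}, which in turn rests on $\pi_2(Sym^d(\Sigma))\cong 0$ from the proof of Proposition~\ref{prop:AffineIdent}) shows the inclusion-induced map $i\colon H^\circ_1(Y,Z)\to H^\circ_1(X,Z)$ is surjective. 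Hence a lift $(h_{01},h_{12},h_{02})$ exists; it is unique only up to $\mbox{Ker}(i)$, which the same exact sequence identifies with the image of a boundary map from $H_2(X,Y;\Z)$.

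The real content is (ii). By the previous paragraph it suffices to show that if $(h_{01},h_{12},h_{02})$ lies in $\mbox{Ker}(i)$, then the operator
\[\xi_{01}\otimes\xi_{12}\ \longmapsto\ f\big((h_{01}\circ\xi_{01})\otimes\xi_{12}\big)+f\big(\xi_{01}\otimes(h_{12}\circ\xi_{12})\big)+h_{02}\circ f(\xi_{01}\otimes\xi_{12})\]
induces the zero map on homology (here the operations $\circ$ are the ones furnished by Proposition~\ref{H1ActionY}). One realizes $(h_{01},h_{12},h_{02})$ geometrically by a closed $1$--manifold $\gamma\subset X$ that respects the product structure near $Z$ — so that its restrictions to the $Y_{\eta^i,\eta^{i+1}}$ represent the $h_{ij}$ — and that bounds in $X$; equivalently by a $1$--cocycle on the relevant path space in $Sym^d(\Sigma)$ which is the coboundary of the $0$--cochain recording intersection numbers with a $2$--chain bounded by $\gamma$. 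Counting the ends of the one-dimensional moduli spaces of holomorphic triangles weighted by this cocycle — exactly as in the proof that $f$ is a chain map and as in \cite[Sec.~4]{MR2141852} and \cite[Lem.~2.6]{MR2222356} — the weighted boundary contributions cancel in pairs, so that the displayed operator is a chain homotopy to zero. Finiteness of the triangle counts involved is guaranteed by the admissibility hypotheses of Subsection~\ref{sec:Admissible}.

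Steps (iii) and (iv) are routine once the analytic setup is in place. With the lift fixed and cocycle representatives $\zeta_{01},\zeta_{12},\zeta_{02}$ chosen, the operator $h\circ f$ of Equation~\ref{equation:H1action4man} is a chain map by the standard count of ends of one-dimensional moduli spaces of weighted holomorphic triangles (the new boundary strata being a holomorphic disk times a holomorphic triangle), combining the fact that $f$ is a chain map with the fact from Proposition~\ref{H1ActionY} that each $h_{ij}\circ(-)$ is a chain map, precisely as in \cite[Lem.~4.18]{MR2113019}. Replacing a $\zeta_{ij}$ by a cohomologous cocycle changes $h\circ f$ by a chain homotopy by the same kind of end count (cf.\ \cite[Lem.~4.19]{MR2113019}), so the induced map on $SFH$ depends only on $h$. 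Finally, to see the action descends to $\Lambda^*(H^\circ_1(X,Z))$ it suffices to check $h\circ(h\circ f)=0$ on homology; this holds because the square of the operator attached to a single cocycle $\zeta$ is null-homotopic (using that $\zeta\cup\zeta$ is exact, together with a moduli-space end count), exactly as in \cite[Prop.~4.17]{MR2113019}.

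The step I expect to be the main obstacle is (ii): constructing the geometric representative $\gamma$ of an element of $\mbox{Ker}(i)$ so that it simultaneously respects the product structure near $Z$ and has intersection numbers with the holomorphic triangles one can keep track of, and then bookkeeping the weighted ends so that each cancels, is the delicate part — it is the sutured analogue of the corresponding argument in \cite{MR2222356}. The sutured-specific subtleties are purely algebro-topological and have already been handled elsewhere in the paper: surjectivity of $i$, the excision isomorphism of Step (i), and the vanishing $\pi_2(Sym^d(\Sigma))\cong 0$ all follow from the arguments in the proofs of Propositions~\ref{prop:AffineIdent}, \ref{prop:H1andH2}, and \ref{prop:PerDomains}, while the needed compactness of moduli spaces is supplied by the admissibility results of Subsection~\ref{sec:Admissible}.
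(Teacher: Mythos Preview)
Your proposal is essentially correct and follows the same route as the paper, which likewise reduces (ii) to the argument of \cite[Lem.~2.6]{MR2222356} and derives (iv) from the exterior-algebra action at the three ends (Proposition~\ref{H1ActionY}). One correction to your step~(i): the vanishing $H_1(X,Y;\Z)=0$ is \emph{not} what is proved in Proposition~\ref{prop:PerDomains} --- that result treats the pair $(X,P_{n+1}\times\Sigma)$, not $(X,Y=\partial X)$ --- and it does not rest on $\pi_2(Sym^d(\Sigma))\cong 0$. The correct justification is Lefschetz duality $H_1(X,\partial X)\cong H^3(X)$, which vanishes because $X$ is homotopy equivalent to a $2$--complex; the paper uses precisely the dual statement $H^1(X,Y)\cong H_3(X)=0$ after identifying $H^\circ_1(-,-)\cong\mbox{Hom}(H^1(-,-))$ and working with the dualized cohomology sequence of the triple $(X,Y,Z)$.
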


\begin{proof}  We begin by verifying that the map \[i:H^\circ_1(Y,Z) \rightarrow H^\circ_1(X,Z)\] is surjective, as claimed above.  For this, we use:

\begin{enumerate}
\item $H^\circ_1(Y,Z) \cong Hom(H^1(Y,Z))$
\item $H^\circ_1(X,Z) \cong Hom(H^1(X,Z))$
\end{enumerate}

along with a dualized version of the cohomology long exact sequence on the triple $(X,Y,Z)$:
\[\xymatrix{Hom(H^2(X,Y)) \ar[r]^\partial & Hom(H^1(Y,Z)) \ar[r]^i & Hom(H^1(X,Z)) \ar[r] & Hom(H^1(X,Y))}.\]
$H^1(X,Y) \cong H_3(X) = 0$, since $X$ is homotopy-equivalent to a $2$-complex; hence, $i$ is surjective, as desired.

To see that the action is well-defined, we need to check that it is trivial on $\ker(i) = \mbox{im}(\partial)$.  For this, notice first that
\begin{eqnarray*}
H^2(X,Y) &\cong& H_2(X)\\
H^1(Y,Z) &\cong& H^1(Y',\partial Y') \cong H_2(Y').
\end{eqnarray*}
We must therefore show that elements in $Hom(H_2(Y'))$ coming from $Hom(H_2(X))$ act trivially.  This follows exactly as in the proof of \cite[Lem. 2.6]{MR2222356}, once we note that

\begin{eqnarray*}
Hom(H_2(Y')) &\cong& \bigoplus_{i \in \Z_3} \left[H^1(\mathbb{T}_{\eta^i}) \oplus H^1(\mathbb{T}_{\eta^{i+1}})/H^1(Sym^d(\Sigma))\right]\\
Hom(H_2(X)) &\cong& H^1(\mathbb{T}_{\eta^0}) \oplus H^1(\mathbb{T}_{\eta^1}) \oplus H^1(\mathbb{T}_{\eta^2})/H^1(Sym^d(\Sigma)).
\end{eqnarray*}

The fact that the action of $H_1^\circ(X,Z)$ extends to a well-defined action of the whole exterior algebra comes from the corresponding fact for the SFH groups at the three ends. 
\end{proof}




\section{Link Surgeries Spectral Sequence} \label{sec:LinkSurg}
The spectral sequence of Theorem \ref{thm:SpecSeq} is a special case of a more general phenomenon, which we now describe.

Let $(Y,\Gamma)$ be a sutured manifold and let $L = K_1 \cup \ldots \cup K_\ell \subset Y$ be an $\ell$ component oriented, framed link in the interior of $Y$.  Recall that a {\em framing} for an $\ell$--component link, $L$, is a choice of non-zero section, $\vec{\lambda} = (\lambda_1, \ldots, \lambda_\ell)$, of its normal disk bundle, where the pushoff, $\lambda_i$, and the zero section, $K_i$, are oriented compatibly for each $i$.  I.e., if $\mu_i$ is the oriented meridian of $K_i$, then $lk(\lambda_i,\mu_i) = +1$.     

Just as in \cite[Sec. 4]{MR2141852}, we can associate to $L$ a link surgeries spectral sequence coming from an iterated mapping cone construction.  We focus on stating the necessary results, adding details of the proofs only where they differ in some crucial way from the analogous proofs in \cite{MR2141852}.

Let $\mathcal{I} = (m_1, \ldots, m_\ell)$ be a multi-framing on $L$, where $m_i \in \{0,1,\infty\}$ and let $Y(\mathcal{I})$ denote the $3$--manifold obtained from $Y$ by doing surgery on $Y$ along $L$ corresponding to $\mathcal{I}$.  As usual, ``$\infty$'' denotes the meridional ($\mu$) slope, ``$0$'' denotes the specified longitudinal framing ($\lambda$) slope, and ``$1$'' denotes the meridian + longitude $(\gamma)$ slope.  Giving the set $\{0,1,\infty\}$ the dictionary ordering, we call $\mathcal{I'} \in \{0,1,\infty\}^\ell = (m_1, \ldots, m_\ell)$ an {\em immediate successor} of $\mathcal{I}$ if there exists some $j$ such that $m_i = m_i'$ if $i \neq j$ and $(m_j, m_j')$ is either $(0,1)$ or $(1,\infty)$.  

One can construct a sutured Heegaard multi-diagram for the collection of $Y(\mathcal{I})$ as described in \cite[Sec. 4]{MR2141852}, by constructing a {\em bouquet} for the framed link.  A bouquet in the sutured setting is a choice of arcs $a_1, \ldots a_\ell$ from a point on each of $K_1, \ldots, K_\ell$ to a chosen boundary component for each connected component of $Y$.  Let $L'$ denote a neighborhood of $L \cup a_1 \cup \ldots \cup a_\ell$.  

Given such a bouquet, we may construct a sutured Heegaard diagram for each of the $Y(\mathcal{I})$ by first constructing a Morse function for $Y-L'$, as in \cite[Prop. 2.13]{MR2253454}, with $d$ index $1$ critical points and $d-\ell$ index $2$ critical points.  Each associated (non-balanced) sutured Heegaard multi-diagram can then be completed to a balanced sutured Heegaard diagram for each $Y(\mathcal{I})$ by extending the Morse function for $Y-L'$ to one for $Y(\mathcal{I})$ that has $d$ additional index $2$ critical points.

More precisely, let $(\Sigma,\boldalpha,\boldbeta)$ be a (non-balanced) sutured Heegaard diagram for $Y-L'$ with $\boldalpha = \{\alpha_1, \ldots, \alpha_d\}$ and $\boldbeta = \{\beta_{\ell+1}, \ldots, \beta_d\}$.  Now consider the following three $\ell$--tuples of curves:

\begin{enumerate}
\item $\mu_1, \ldots, \mu_{\ell} \subset \Sigma$, representing the images in $Y-L'$ of meridians ($\infty$ slopes) of $K_1, \ldots K_\ell \subset Y$,
\item $\lambda_1, \ldots, \lambda_\ell \subset \Sigma$, representing the images of longitudes ($0$ slopes) of $K_1, \ldots, K_\ell \subset Y$,
\item $\gamma_1, \ldots, \gamma_\ell \subset \Sigma$, representing the images of longitudes + meridians ($1$ slopes) of $K_1, \ldots, K_\ell \subset Y$.
\end{enumerate} 

Then, given a Heegaard diagram $(\Sigma,\boldalpha, \boldbeta)$ for $Y-L'$ and a particular $\mathcal{I} \in \{0,1,\infty\}^\ell$, we form a balanced sutured Heegaard diagram $(\Sigma_{\mathcal{I}}, \boldalpha_{\mathcal{I}},\boldeta_{\mathcal{I}})$ for $Y(\mathcal{I})$, where $\Sigma_{\mathcal{I}} = \Sigma$, $\boldalpha_{\mathcal{I}} = \boldalpha$, and $\boldeta(\mathcal{I}) = \{\eta_1, \ldots, \eta_d\}$ is given by \[ \eta_i = \left\{\begin{array}{cl}
  \beta_i & \mbox{if $i>\ell$}\\
  \mu_i & \mbox{if $m_i = \infty$}\\
  \lambda_i & \mbox{if $m_i = 0$}\\
  \gamma_i & \mbox{if $m_i = 1$}\\
  \end{array}\right.\]

Note that, with the above choice, we can construct a sutured Heegaard multi-diagram for any ordered set $\{\mathcal{I}^0, \ldots \mathcal{I}^k\}$ with each $\mathcal{I}^i \in \{0,1,\infty\}^\ell$.  Furthermore, given a sequence $\mathcal{I}^0 < \ldots < \mathcal{I}^k$ of multi-framings, note that \[Y_{\eta(\mathcal{I}^i),\eta(\mathcal{I}^{i+1})} = (F \times I)\,\, \# \,\,j( S^1 \times S^2)\] for some surface with boundary, $F$.  Here, $j$ will be the number of $\eta$ curves upon which $\mathcal{I}^i$ and $\mathcal{I}^{i+1}$ agree.  In particular, by Propositions 9.4 and 9.15 of \cite{MR2253454}, we know that $SFH(Y_{\eta(\mathcal{I}^i),\eta(\mathcal{I}^{i+1})}) = V^{\otimes j}$, where $V = \mathbb{Z}_2 \oplus \mathbb{Z}_2$.  In addition, there is a canonical top-degree generator of $\widehat{HF}(\#^j S^1 \times S^2)$, hence of $Y_{\eta(\mathcal{I}^i),\eta(\mathcal{I}^{i+1})}$, denoted $\theta$.  We get an induced map \[D_{\mathcal{I}^0 < \ldots < \mathcal{I}^k}: CFH(Y(\mathcal{I}^0)) \rightarrow CFH(Y(\mathcal{I}^k))\] defined by 

\begin{equation} \label{eqn:Diff}
D_{\mathcal{I}^0 < \ldots < \mathcal{I}^k}(\xi) = f_{\alpha, \eta(I^0), \ldots, \eta(I^k)}(\xi \otimes \theta_1 \otimes \ldots \otimes \theta_k),
\end{equation}

where $\theta_i$ represents the top-degree generator of $SFH(Y_{\eta(I^i),\eta(I^{i+1})})$ and $f_*$ is the map defined by counting holomorphic $k+2$--gons, defined in the last section.
We can then define \[X = \bigoplus_{\mathcal{I} \in \{0,1,\infty\}^\ell} CFH(Y(\mathcal{I})),\] endowed with a map \[D: X \rightarrow X,\] defined by \[D\xi = \sum_{\mathcal{J}}\sum_{\{\mathcal{I} = \mathcal{I}^0<\ldots<\mathcal{I}^j = \mathcal{J}\}} D_{I^0<\ldots<I^j}(\xi),\] where the index set of the inner sum is taken over the set of all increasing sequences $\mathcal{I}$ to $\mathcal{J}$ having the property that $\mathcal{I}^{i+1}$ is an immediate successor of $\mathcal{I}^i$.  Let $X^{(0,1)}$ denote the subset of $X$ corresponding to $\mathcal{I} = \{0,1\}^\ell \subset \{0,1,\infty\}^\ell$ and $D^{(0,1)}$ denote the restriction of $D$ to this subset.

\begin{proposition} \label{prop:LinkSurg} $X^{(0,1)}$ is a filtered chain complex, with differential $D^{(0,1)} = D_0 + D_1 \ldots D_\ell$, where \[D_k(\xi) = \sum_{\mathcal{J}} \sum_{\{\mathcal{I} = \mathcal{I}^0 < \ldots < \mathcal{I}^j = \mathcal{J}|j=k\}}D_{I^0<\ldots<I^j}(\xi).\]  Associated to this filtered chain complex is a spectral sequence whose $E^1$ term is \[\bigoplus_{\mathcal{I}\in \{0,1\}^\ell} SFH(Y(\mathcal{I}))\] and whose $E^\infty$ term is $SFH(Y(\mathcal{I}_\infty))$, where $\mathcal{I}_\infty := (\infty, \ldots \infty)$.
\end{proposition}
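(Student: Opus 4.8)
\textit{Proof proposal.}
The plan is to transplant Ozsv{\'a}th--Szab{\'o}'s construction of the link surgeries spectral sequence \cite[\S 4]{MR2141852} to the sutured category, using the apparatus assembled in Section~\ref{sec:MultiDiag} (sutured multi-diagrams, admissibility, the Maslov index formula, the $\Lambda^*H_1$-action, and polygon associativity) to license every step. First I would verify that $D^2 = 0$ on the full complex $X = \bigoplus_{\mathcal{I} \in \{0,1,\infty\}^\ell} CFH(Y(\mathcal{I}))$. Expanding $D^2$ and collecting terms according to their source and target multi-framings, $D^2\xi$ becomes a signed sum of iterated polygon maps applied to tensors $\xi \otimes \theta_1 \otimes \cdots$, and this sum is governed by the generalized associativity relation
\[\sum_{0\leq i < j \leq n} f_{\eta^i, \ldots, \eta^j}\bigl(f_{\eta^0, \ldots, \eta^i,\eta^j,\ldots, \eta^n}(-) \otimes (-)\bigr) = 0\]
of Section~\ref{sec:MultiDiag}. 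Its terms separate into the desired $D \circ D$ contributions and spurious ones in which an inner polygon is formed out of the top generators $\theta_i$ alone; the spurious terms must be shown to vanish. This is precisely where the identifications $Y_{\eta(\mathcal{I}^i),\eta(\mathcal{I}^{i+1})} = (F \times I)\ \#\ j(S^1 \times S^2)$ and $SFH(Y_{\eta(\mathcal{I}^i),\eta(\mathcal{I}^{i+1})}) = V^{\otimes j}$ enter: one invokes the model computation that the relevant triangle map sends $\theta \otimes \theta$ to the top-degree generator of the glued-up piece, and that the corresponding higher polygon maps among top-degree generators vanish. Because the sutured Floer homology of a connected sum with $S^1 \times S^2$ is determined by the closed-manifold answer (Propositions 9.4 and 9.15 of \cite{MR2253454}), these polygon counts reduce to the computations of \cite[\S 4]{MR2141852}, while the moduli-space degeneration analysis underlying the associativity relation is exactly the one set up in Section~\ref{sec:MultiDiag} and is insensitive to the sutured-versus-closed distinction. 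Granting $D^2 = 0$ on $X$, observe that the multi-framings with some coordinate equal to $\infty$ span a subcomplex $X_\infty \subset X$ (an immediate successor never lowers a coordinate, and $\infty$ is maximal, so an $\infty$-coordinate persists); hence $X^{(0,1)}$ is the associated quotient complex, its induced differential is $D^{(0,1)}$, and $(D^{(0,1)})^2 = 0$. Since within $\{0,1\}^\ell$ each immediate-successor step replaces one $0$ by a $1$, an increasing chain has length at most $\ell$, so indeed $D^{(0,1)} = D_0 + D_1 + \cdots + D_\ell$.

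Next, filter $X^{(0,1)}$ by $\mathcal{F}_p = \bigoplus_{|\mathcal{I}|_1 \leq p} CFH(Y(\mathcal{I}))$, where $|\mathcal{I}|_1$ denotes the number of coordinates of $\mathcal{I}$ equal to $1$. A length-$k$ increasing chain in $\{0,1\}^\ell$ raises $|\mathcal{I}|_1$ by exactly $k$, so $D_k$ raises filtration level by $k$; in particular $D^{(0,1)}$ respects the filtration, the associated graded differential is $D_0$ (the direct sum of the internal differentials on the $CFH(Y(\mathcal{I}))$), and the $E^1$ page of the resulting spectral sequence is $\bigoplus_{\mathcal{I} \in \{0,1\}^\ell} SFH(Y(\mathcal{I}))$, with $d^1$ induced by $D_1$. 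This is the ordinary spectral sequence of a bounded filtered complex and requires no new input.

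It remains---and this is the hardest part---to identify the total homology $H_*(X^{(0,1)}, D^{(0,1)})$ with $SFH(Y(\mathcal{I}_\infty))$ (up to an overall grading shift, of the sort discussed in Section~\ref{sec:RelKnotFloer}). I would argue by induction on $\ell$. The case $\ell = 1$ is the surgery exact triangle relating $SFH(Y(0))$, $SFH(Y(1))$ and $SFH(Y(\infty))$---equivalently, the statement that the full complex over $\{0,1,\infty\}$ is acyclic---and is proved by the standard holomorphic-triangle argument, now legitimate in the sutured category because of Section~\ref{sec:MultiDiag}. For $\ell \geq 2$, writing $L = K_1 \cup L'$ and organizing $X$ along the $K_1$-coordinate exhibits $X(L)$ as the iterated mapping cone of $X(L' \subset Y(K_1=0)) \to X(L' \subset Y(K_1=1)) \to X(L' \subset Y(K_1=\infty))$, with the higher $D_k$-terms supplying the requisite homotopies; since the cone of a map between acyclic complexes is acyclic, $X(L)$ is acyclic by induction. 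The short exact sequence $0 \to X_\infty \to X \to X^{(0,1)} \to 0$ then gives $H_*(X^{(0,1)}) \cong H_{*-1}(X_\infty)$, and a parallel induction---organizing $X_\infty$ along the $K_1$-coordinate, whose $K_1 = \infty$ block is the acyclic full complex for $L' \subset Y$, and appealing to the $\ell = 1$ triangle one last time---identifies $H_*(X_\infty)$ with $SFH(Y(\mathcal{I}_\infty))$. \emph{The main obstacle} is that this bookkeeping cannot be carried out at the level of homology alone: the mapping-cone identifications at successive stages must be mutually compatible, which forces one to track the higher-length polygon maps as explicit chain homotopies and to phrase the induction in terms of the filtered quasi-isomorphism type of the complexes and of the triangle maps between them. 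This is exactly the argument of \cite[\S 4]{MR2141852}, and no essential feature of it is sensitive to the passage from closed to sutured Floer homology once the multi-diagram machinery of Section~\ref{sec:MultiDiag} is in place.
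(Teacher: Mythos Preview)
Your proposal is correct and follows essentially the same route as the paper: both transplant Ozsv\'ath--Szab\'o's argument from \cite[\S 4]{MR2141852} to the sutured setting, using polygon associativity together with the analogue of \cite[Lem.~4.5]{MR2141852} to obtain $D^2=0$, and then the chain-level exact triangle (the analogue of \cite[Thm.~4.7]{MR2141852}, resting on the algebraic \cite[Lem.~4.4]{MR2141852}) iterated over the link components. The one organizational difference is your detour through the acyclicity of the full $\{0,1,\infty\}^\ell$ complex $X$ and the short exact sequence $0\to X_\infty\to X\to X^{(0,1)}\to 0$: the paper instead inducts directly on $X^{(0,1)}$, peeling off one component at a time via the single-coordinate mapping-cone identification. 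Your route is valid, but note that your ``parallel induction'' on $X_\infty$ is doing exactly the same work as the direct induction on $X^{(0,1)}$ (same triangle, same filtered-quasi-isomorphism bookkeeping), so the acyclicity of $X$---while a clean statement---does not actually shorten the argument. You correctly flag the main subtlety (that the inductive quasi-isomorphisms must be compatible with the polygon maps, forcing a filtered chain-level formulation) and correctly locate its resolution in \cite[\S 4]{MR2141852}.
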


\begin{proof}
The proof of the proposition follows exactly as in the proof of \cite[Thm. 4.1]{MR2141852}.  In particular, polygon associativity combined with an analogue of \cite[Lem. 4.5]{MR2141852} implies that $(X,D)$ is a chain complex ($D^2 = 0$).  An argument analogous to the one in the proof of \cite[Thm. 4.7]{MR2141852} (which relies on the algebraic \cite[Lem. 4.4]{MR2141852}) implies that if $\mathcal{I}_0 = (m^0_1, \ldots m^0_\ell)$, $\mathcal{I}_1 = (m^1_1,\ldots, m^1_\ell)$, $\mathcal{I}_\infty = (m^\infty_1, \ldots, m^\infty_\ell)$ are three multi-framings for which $\exists$ $j$ such that $m^0_i = m^1_i = m^\infty_i$ for all $i\neq j$ and $m^*_j = *$ for $* \in \{0,1,\infty\}$, then $CFH(Y(\mathcal{I}_\infty))$ is quasi-isomorphic to the mapping cone of the map \[f: CFH(Y(\mathcal{I}_0)) \rightarrow CFH(Y(\mathcal{I}_1)).\] Here, $f$ is the map induced by the restricted differential, $D$.  The proposition then follows by induction on the number of link components, just as in the proof of \cite[Thm. 4.1]{MR2141852}.

To see that the $E^1$ term is as stated, note that the $D_0$ term in the differential, for each direct summand of the iterated mapping cone, is just the internal differential (defined by counting holomorphic disks) for that summand.  

The $E^\infty$ term, quasi-isomorphic to $SFH(Y_\infty)$, will be the homology of the complex $X^{(0,1)}$ with differential $D^{(0,1)}$ given by all maps counting polygons, i.e., $D^{(0,1)} = D_0 + \ldots +D_{\ell}$. 
\end{proof}

\section{Spectral Sequence from Khovanov to Sutured Floer} \label{sec:SpecSeq}
In this section, we will use the link surgeries spectral sequence and the equivalence of certain Khovanov and sutured Floer homology functors on a restricted class of tangles to prove Theorem \ref{thm:SpecSeq}.

\subsection{Admissible, Balanced, Resolved Tangles} \label{sec:Resolved}
Recall that $I := [-1,4]$ and $D \times I$ denotes the product sutured manifold $F_{0,1} \times I$ as in Example \ref{example:prodsurface}.   Whenever we write $D \times I$, we shall always assume we have fixed an identification with a standard subset of $\R^3$: \[D \times I := \{(x,y,z)\in \R^3\,\,|\,\,x^2 + y^2 \leq 1, z \in [-1,4]\}.\]

Let $D_+$ (resp., $D_-$) denote $D \times \{4\}$ (resp., $D \times \{-1\}$) and $\mbox{Int}(D_{\pm})$ denote the interior of $D_\pm$.  More generally, let $D_{a}$ denote $D \times \{a\}$ for each $a \in [-1,4]$.

\begin{definition} \label{defn:tangle} An {\em admissible tangle} in $D \times I$ is an equivalence class of smoothly imbedded, unoriented $1$--manifolds $T$ satisfying $\partial T \subset (\mbox{Int}(D_+) \cup \mbox{Int}(D_-))$, where $T_1 \sim T_2$ if there is an ambient isotopy connecting $T_1$ to $T_2$ which acts trivially on $(\partial D) \times I$.
\end{definition}

\begin{definition} \label{defn:balancedtangle} An admissible tangle is said to be {\em balanced} if each equivalence class representative, $T$, satisfies $\#(T \cap D_+) = \#(T \cap D_-)$.
\end{definition}

\begin{definition} \label{defn:projection}
Let \[\pi_y: D \times I \rightarrow \left(A := \{(x,y,z) \in \R^3\,\,|\,\, x \in [-1,1], y=0, z \in [-1,4]\}\right),\] given by $\pi_y(x,y,z) = (x,0,z)$, be the projection to the $xz$ plane.  For any tangle representative $T$ for which $\pi_y(T) \subset A$ is a smooth imbedding away from finitely many transverse double points, we denote by $\mathcal{P}(T)$ the enhancement of $\pi_y(T)$ which records over/undercrossing information.  We call $\mathcal{P}(T)$ the {\em projection} of $T$.
\end{definition}

Note that, by transversality, a generic representative, $T$, of a tangle equivalence class has a well-defined projection (i.e., satisfies the condition above).

\begin{definition} \label{defn:resolved}
A tangle representative, $T$, is said to be {\em resolved} if $\pi_y(T) \subset A$ is a smooth imbedding.
\end{definition}

\begin{definition}
We call an admissible, balanced, resolved tangle representative $T \subset D \times I$ an {\em ABR}.
\end{definition}


\begin{definition}
A {\em saddle cobordism} $S\subset A\times [0,1]$ is a smooth cobordism between two ABR projections $\cP(T')$ and $\cP(T'')$ with the property that $\exists$ a unique $c \in [0,1]$ such that 

\begin{enumerate}
\item $S \cap (A \times \{c\})$ is a smooth $1$--dimensional imbedding away from a single double-point.\\
\item $S \cap (A \times \{s\})$ is a smooth $1$--dimensional imbedding whenever $s \neq c$.
\end{enumerate}

Let $|T'|$ (resp., $|T''|$) denote the number of connected components of $T'$ (resp., $T''$).  There are three cases:

\begin{enumerate}
  \item When $|T'| = |T''| + 1$, we call $S$ a {\em merge saddle cobordism}, 
  \item when $|T'| = |T''|-1$, we call $S$ a {\em split saddle cobordism}, and 
  \item when $|T'| = |T''|$, we call $S$ a {\em zero saddle cobordism}.  Note that in this case, exactly one of $T'$ or $T''$ {\em backtracks} (see Definition \ref{defn:backtrack}).
\end{enumerate}
\end{definition}

We can associate to each ABR a module and to each saddle cobordism between ABR projections a map between modules in two ways: via a Khovanov-type procedure (which we call a {\em Khovanov functor}) and via a sutured Floer homology procedure (which we call a {\em sutured Floer functor}).  We will show that these two procedures yield the same modules and maps for ABR's, a key step in the proof of Theorem \ref{thm:SpecSeq}.

We use the language of functors informally, only to organize our arguments. 
\subsection{Khovanov Functor}\label{sec:KhFunctor}
Let $T$ be an ABR with connected components $T_1, \ldots, T_{k+a}$, where $T_1, \ldots, T_k$ satisfy $T_i \cap \partial(D \times I) = \emptyset$ and $T_{k+1}, \ldots, T_{k+a}$ satisfy $T_i \cap \partial(D \times I) \neq \emptyset$.

\begin{definition} \label{defn:KhFunctor} For $T$ an ABR, let $Z(T)$ denote the $\Ztwo\,$--vector space formally generated by the circle components $[T_1],\ldots,[T_k]$. For convenience, we identify $Z(T)$ with the quotient
\[
Z(T) = Span_{\Ztwo}
([T_1],\ldots,[T_{k+a}])/[T_{k+1}]\sim\ldots\sim [T_{k+a}]\sim 0\]
\end{definition}

\begin{definition} \label{defn:backtrack}
We say that $T = T_1 \amalg \ldots \amalg T_{k+a}$ {\em backtracks} if there exists a $j \in \{k+1, \ldots, k+a\}$ such that 
$\partial T_j\subset D_-$
or $\partial T_j\subset D_+$.
\end{definition}

Now let $V(T)$ be the $\mathbb{Z}_2$-vector space
$$
V(T):=
\begin{cases}
0 &\mbox{if $T$ backtracks,}\\
\Wedge^* Z(T) &\mbox{otherwise.}
\end{cases}
$$
Here, $\Wedge^* Z(T)$ denotes the
exterior algebra of $Z(T)$, i.e.
the polynomial algebra over $\mathbb{Z}_2$
in formal variables $[T_1],\ldots,[T_{k+a}]$,
modulo the relations $[T_i]^2=0$ for $i\leq k$
and $[T_i]=0$ for $i>k$.

\begin{remark} \label{remark:KhovNot}
Our notation is related to that of Khovanov (see \cite[Sec. 2]{MR1928174} and \cite[Sec. 5]{MR2124557})
as follows.
Given $T$, an ABR, Khovanov defines
a left $H^n$-module $\mathcal{F}(T)$, for
a graded $\mathbb{Z}$-algebra $H^n$,
which depends on \[n = \#(T \cap D_+) = \#(T \cap D_-).\]

Note that Khovanov actually takes $T \subset D \times [0,1]$ to be a tangle with $2n$ upper endpoints $E = \{e_1, \ldots, e_{2n}\} = T \cap D_+$ and no lower endpoints.  To match his notation, we choose a smoothly imbedded circle, $C \subset D_+$, separating $\partial(D \times I)$ into two connected components, $R_+$ and $R_-$, satisfying \[R_+ \cap E = \{e_1, \ldots, e_n\} \,\, \mbox{  and  } \,\, R_- \cap E = \{e_{n+1}, \ldots, e_{2n}\}.\]  Orienting $C$ compatibly with $R_+ \subset D$, and thinking of $C$ as the suture of $D \times I$, we can now reparameterize in the obvious way to identify Khovanov's notation with ours.
\end{remark}

There is then an isomorphism
$$
V(T)\cong \mathbb{Z}_2\otimes_{\mathbb{Z}}
(_e\mathbb{Z}\otimes_{H^n}\mathcal{F}(T))
$$
where $_e\mathbb{Z}$ denotes the
right $H^n$-module associated to the Jones-Wenzl projector described in \cite[Sec. 5]{MR2124557}.

Now consider a merge saddle cobordism $S_m \subset A\times [0,1]$
between two ABR projections $\cP(T')$ and $\cP(T'')$, where the saddle merges two components of $T'$ labeled $T_i'$ and $T_j'$.
Then there is a natural identification
$$
Z(T')/[T'_i]\sim [T'_j]=Z(T'')
$$
and a corresponding isomorphism
$$
\alpha : V(T')/[T'_i]\sim [T'_j]\stackrel{\cong}{\longrightarrow} V(T'').
$$
Associated to $S_m$ is a map $V(T') \rightarrow V(T'')$, usually referred to as the {\em multiplication} map:

\begin{definition}\label{defn:saddlemult}
The {\em multiplication map}, \[\mathcal{V}_m : V(T')\rightarrow V(T''),\]
is the composite
\[
\begin{CD} V(T') @>{\pi}>> 
\frac{V(T')}{[T'_i]\sim [T'_j]
}@>{\alpha}>> V(T'').
\end{CD}
\]
where $\pi$ denotes the quotient map.
\end{definition}

If $S_m$ is a merge cobordism, then running it backwards produces $S_\Delta$, a split cobordism from $\cP(T'')$ to $\cP(T')$.  Using the notation from above, we define the {\em comultiplication} map:

\begin{definition}
The {\em comultiplication map}, \[\mathcal{V}_\Delta : V(T'')\rightarrow V(T'),\]
is defined to be the composite
$$
\begin{CD}V(T'') @>{\alpha^{-1}}>> 
\frac{V(T')}{[T'_i]\sim [T'_j]}@>{\varphi}>> V(T').
\end{CD}
$$
where the map $\varphi$ is defined by
$\varphi(a):=([T'_i]+[T'_j])\wedge \widetilde{a}$, where $\widetilde{a}$ is any lift of $a$ in $\pi^{-1}(a)$.
\end{definition}

The preceding definitions are used to define a chain complex associated to an admissible, balanced tangle projection $\cP(T) \subset A$ as follows.   

Label the crossings of $\cP(T)$ by $1, \ldots, \ell$.  For any $\ell$--tuple $\cI = (m_1, \ldots, m_{\ell}) \in \{0,1,\infty\}^\ell$, we denote by $\cP_{\cI}(T)$ the tangle projection obtained from $\cP(T)$ by

\begin{itemize}
  \item leaving a neighborhood of the $i$th crossing unchanged, if $m_i = \infty$,
  \item replacing a neighborhood of the $i$th crossing with a ``0'' resolution, if $m_i = 0$, and
  \item replacing a neighborhood of the $i$th crossing with a ``1'' resolution, if $m_i = 1$.
\end{itemize}

See Figure \ref{fig:Resolutions} for an illustration of the ``0'' and ``1'' resolutions.

We have chosen the above notation to coincide with that of Section \ref{sec:LinkSurg}.  Incorporating the language of that section, we define:

\begin{definition}
Given $\cP(T) \subset A$, a projection of an admissible, balanced tangle, $T$, we have the chain complex \[CV(\cP(T)) = \left(\bigoplus_{\cI \in \{0,1\}^\ell} V(P_{\cI}(T)),D\right),\] where $D = \sum_{\cI,\cI'} D_{\cI,\cI'}$, with the sum taken over all pairs $\cI,\cI' \in \{0,1\}^\ell$ such that $\cI'$ is an immediate successor of $\cI$, and \[D_{\cI, \cI'}: V(\cP_{\cI}(T)) \rightarrow V(\cP_{\cI'}(T))\] is
\begin{itemize}
  \item the map, $0$, if $\cP_{\cI}(T)$ and $\cP_{\cI'}(T)$ are related by a zero saddle cobordism (and, hence, exactly one of $V(\cP_{\cI}(T))$, $V(\cP_{\cI'}(T))$ is $0$),
  \item the map, $\mathcal{V}_m$ (resp., $\mathcal{V}_\Delta$) associated to the merge (resp., split) saddle cobordism, $\cP_{\cI}(T)$ to $\cP_{\cI}(T)$, otherwise.
\end{itemize}
Let $V(T)$ denote the homology, $H_*(CV(\cP(T)))$, of $CV(\cP(T))$.
\end{definition}

Khovanov proves, in \cite{MR2124557}, that $D^2 = 0$, and the homology, $V(T)$, of the resulting chain complex is an invariant of the tangle equivalence class (i.e., independent of the choice of projection, $\cP(T)$).
Furthermore, if $T$ is equipped with an orientation, Khovanov endows the complex $CV(\cP(T))$ with
a pair of gradings, called
the {\em cohomological} and the {\em quantum} grading.

\begin{definition} (Cohomological grading)
Let $a$ be an element of $CV(\cP(T))$, and suppose
that $a$
is contained in $V(\cP_{\cI}(T))\subset CV(\cP(T))$,
for an $\ell$-tuple
$\cI=(m_1,\ldots,m_\ell)\in\{0,1\}^\ell$. Then
$$
i(a):=-n_++\sum_i m_i
$$
where $n_+$ denotes the number of positive crossings
in $\cP(T)$.
\end{definition}

\begin{definition} (Quantum grading)
Let $a$ be an element
of $CV(\cP(T))$,
and suppose that $a$ is contained in
$\Wedge^d (Z(\cP_{\cI}(T)))\subset V(\cP_{\cI}(T))
\subset CV(\cP{T})$, for an
$\ell$-tuple $\cI=(m_1,\ldots,m_\ell)\in\{0,1\}^\ell$
and a non-negative integer $d\geq 0$.
Then
$$
j(a) := \dim_{\Ztwo} (Z(\cP_{\cI}(T))) - 2d
+ n_- -2n_+ + \sum_i m_i
$$
where $n_+$ (resp. $n_-$) denotes the
number of positive (resp. negative) crossings
in $\cP(T)$.
\end{definition}

Corresponding to the two gradings, there is a
a decomposition of $CV(\cP(T))$ into subspaces
$$
CV(\cP(T))=\bigoplus_{i,j\in\mathbb{Z}}CV(\cP(T))^{i,j}
$$
where $CV(\cP(T))^{i,j}\subset CV(\cP(T))$
denotes the subspace consisting of all elements
which have cohomological degree $i$ and quantum
degree $j$. The differential in $CV(\cP(T))$ is bigraded (and in fact
carries $CV(\cP(T))^{i,j}$ to $CV(\cP(T))^{i+1,j}$),
and hence there is an induced bigrading on
homology:
$$
V(T)=\bigoplus_{i,j}V(T)^{i,j}.
$$






\begin{remark} \label{rmk:ReltoKhov}
In \cite{MR2124557}, Khovanov associates to a knot, $K \subset S^3$, a
bigraded homology group for each $n\in\Z_{>0}$,
here denoted $\widetilde{Kh}_n(K)$, whose graded Euler characteristic is the reduced $n$--colored Jones polynomial, $\widetilde{J}_n(K)$, of $K$:
$$
\widetilde{J}_n(K)=\sum_{i,j}(-1)^iq^j\dim_{\mathbb{Z}_2}\widetilde{Kh}_n(K)^{i,j}.
$$
These groups are related to the complexes described in this section as follows:
\[\widetilde{Kh}_n(\overline{K}):=V(T^n),\] where $\overline{K}$ is the mirror of $K$, and $T^n$ is the admissible, balanced tangle obtained by removing the neighborhood of a point $p \in K$ and taking the $n$--cable, $T^n$, of the resulting tangle.  $\overline{K}$ appears above, as in \cite{MR2141852}, because Khovanov and Ozsv{\'a}th-\Szabo use opposite conventions for the ``0'' and ``1'' resolutions.  To define the absolute $(i,j)$ gradings, we use the orientation convention for $T^n$ specified in \cite[Sec. 4]{MR2124557}.
\end{remark}

\subsection{Sutured Floer functor} \label{sec:SFFunctor}
In the previous subsection, we described a Khovanov-type ``functor'' which assigns to each ABR, $T \subset D \times I$, a free module over $\Wedge^*(Z(T))$ which is 
\begin{itemize} 
  \item rank $0$ if $T$ backtracks,
  \item rank $1$ otherwise,
\end{itemize}
and which assigns a module homomorphism to each saddle cobordism between ABR projections.

We now describe a sutured Floer-type ``functor''.   Proposition \ref{prop:KhovSFHFunctEquiv} will prove the equivalence of the two.  As before, singular (co)homology groups, where unspecified, will be taken with $\Z$ coefficients.  We will use $H_*^\circ(-)$ to denote $H_*(-;\Z)/\mbox{Tors}$ and $Hom(-)$ denote $Hom(-,\Z)$.

The sutured Floer-type ``functor'' associates
\begin{itemize}
  \item to an ABR, $T$, the sutured Floer homology of $ Y = \boldSigma(D \times I, T)$, considered as a module over $\Wedge^*(H^\circ_1(Y,\partial Y)\otimes_\Z \Ztwo)$ as described in Section \ref{sec:H1Action},
  \item to a saddle cobordism $S$ between two ABR projections, $\cP(T')$ and $\cP(T'')$ the induced map \[SFH(\boldSigma(D \times I, T')) \rightarrow SFH(\boldSigma(D \times I, T''))\] on sutured Floer homology obtained by counting triangles.  More precisely, if $\cP(T')$ and $\cP(T'')$ are related by a saddle cobordism, then $\boldSigma(D \times I, T'')$ can be obtained from $\boldSigma(D \times I, T')$ by means of a single surgery on an imbedded knot.  After constructing a sutured Heegaard triple-diagram, $(\Sigma, \boldalpha, \boldeta', \boldeta'')$, subordinate to this knot as in Section \ref{sec:LinkSurg}, the map (equipped with an action of $H_1$) \[SFH(Y_{\alpha,\eta'}) \rightarrow SFH(Y_{\alpha, \eta''})\] is given as in Section \ref{sec:H1action4man}.

We denote the induced map described above by $\mathcal{F}_m$ (resp., $\mathcal{F}_\Delta$) if $S$ is a merge (resp., split) saddle cobordism.  If $S$ is a zero saddle cobordism, the induced map will be $0$, by Lemma \ref{lemma:FloerFunctor}.
\end{itemize}

\begin{lemma} \label{lemma:FloerFunctor}Let $T$ be an ABR.  Then \[SFH(\boldSigma(D \times I, T)) = \begin{cases}
  0 & \mbox{if $T$ backtracks}\\
  \Lambda^*(H^\circ_1(Y,\partial Y)\otimes_\Z \Ztwo) & \mbox{otherwise}
  \end{cases}\]
\end{lemma}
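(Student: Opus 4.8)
The plan is to identify the sutured manifold $Y:=\boldSigma(D\times I,T)$ up to homeomorphism and then read off both $SFH(Y)$ and its $\Wedge^*$--module structure from standard computations of Juh\'asz. First I would use that $T$ is resolved: $\pi_y|_T$ is then an embedding, so an ambient isotopy of $D\times I$ trivial on $(\partial D)\times I$ carries $T$ to a standard model --- a crossingless planar matching of the $n$ points of $T\cap D_+$ with the $n$ points of $T\cap D_-$ by $n$ disjoint arcs, together with $k=\dim_{\Ztwo}Z(T)$ small unknotted, unlinked circles. Two elementary branched--cover observations will then pin down $Y$. (i) If $C$ is one of the circles, sitting in a ball $B_0$ disjoint from the rest of $T$ and from $\partial(D\times I)$, then $\partial B_0$ lifts to two spheres and $\boldSigma(B_0,C)\cong S^2\times I$, so gluing back gives $\boldSigma(D\times I,T)\cong\boldSigma(D\times I,T\setminus C)\;\#\;(S^1\times S^2)$; iterating, $Y\cong Y_0\;\#\;k(S^1\times S^2)$, where $Y_0:=\boldSigma(D\times I,T_0)$ and $T_0$ is $T$ with all circles deleted. (ii) If $T_0$ has no backtracking arc, its standard model is the monotone $n$--strand tangle $\{n\text{ pts}\}\times I\subset D^2\times I$, so $Y_0=\boldSigma(D^2,\{n\text{ pts}\})\times I$ is the \emph{product} sutured manifold on the surface $\Sigma_n:=\boldSigma(D^2,\{n\text{ pts}\})$, with sutures $\partial\Sigma_n\times I$.

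Granting this, the non--backtracking case falls out of Juh\'asz's computations in \cite{MR2253454}: $SFH$ of a product sutured manifold is $\Ztwo$ (Proposition~9.4), and $SFH(Y_0\;\#\;k(S^1\times S^2))\cong SFH(Y_0)\otimes V^{\otimes k}$ (Proposition~9.15), both compatibly with the $\Wedge^* H_1$--action. The action on the product factor is trivial since $H_1(\Sigma_n\times I,\partial)\cong H^2(\Sigma_n)=0$, and the action of $\Wedge^*H^1(S^1\times S^2)$ on $\widehat{HF}(S^1\times S^2)=V$ is the regular representation, so $SFH(Y)$ is free of rank one over $\Wedge^*(H^\circ_1(Y,\partial Y)\otimes_\Z\Ztwo)$ --- hence isomorphic to it. A short separate computation gives $H_1(Y,\partial Y)\cong H^2(Y)\cong\Z^k$, so the ranks ($2^k$) match.

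For the backtracking case --- which I expect to be the real content --- I would first delete the circle components (harmless, since $SFH(Y_0)\otimes V^{\otimes k}$ vanishes once $SFH(Y_0)$ does) and so reduce to $T=T_0$ a crossingless arc tangle containing a backtracking arc $T_j$, say with $\partial T_j\subset D_-$. Taking $T_j$ innermost, it cobounds an embedded disk $\delta\subset D\times I$ with a sub-arc $\sigma$ of the diameter $D_-\cap A$ joining its two feet, where $\delta\cap T=T_j$ and $\sigma$ meets $\partial T$ only at $\partial T_j$. Its preimage $\widetilde\delta\subset Y_0$ is a properly embedded disk whose boundary is the core of the embedded annulus $\boldSigma(N(\sigma),\partial T_j)\subset R_-(Y_0)$, and I would check that this core is essential in $R_-(Y_0)=\boldSigma(D_-,\{n\text{ pts}\})$. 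Thus $R_-(Y_0)$ is compressible and $(Y_0,\Gamma_0)$ is not taut; since $Y_0$, being the double branched cover of $B^3$ over a crossingless tangle, is irreducible, this should force $SFH(Y_0,\Gamma_0)=0$.

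The hard part is precisely this last implication: pinning down which vanishing result applies and verifying its hypotheses. The natural route is Juh\'asz's surface decomposition machinery --- decompose $(Y_0,\Gamma_0)$ along the (groomed) decomposing surface $\widetilde\delta$ and along $-\widetilde\delta$; since $\partial\widetilde\delta$ is essential, the two families of outer $\mathrm{Spin}^c$ structures together exhaust $\mathrm{Spin}^c(Y_0,\Gamma_0)$, so $SFH(Y_0,\Gamma_0)$ is the direct sum of the $SFH$'s of the two decomposed manifolds, and each of these is visibly not taut (its $R$ acquires a disk or compressible component), hence has vanishing $SFH$; alternatively, one may invoke Juh\'asz's theorem that an irreducible balanced sutured manifold with nonzero $SFH$ is taut. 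If this bookkeeping proves delicate, a second option is to build a sutured Heegaard diagram for $Y_0$ adapted to the tangle in which the curve carrying the turnback $T_j$ is disjoint from its $\boldbeta$--partner, so that $\mathbb{T}_\alpha\cap\mathbb{T}_\beta=\emptyset$ and $SFH(Y_0)=0$ for the trivial reason.
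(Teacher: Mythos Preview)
Your non--backtracking argument matches the paper's essentially exactly: both identify $\boldSigma(D\times I,T)\cong (F\times I)\,\#\,k(S^1\times S^2)$ and then apply Juh\'asz's product and connected--sum formulas together with the known $\Wedge^*H_1$--module structure on $\widehat{HF}(\#^k(S^1\times S^2))$.

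For the backtracking case the paper takes your ``second option'' rather than your primary one. It builds an explicit sutured Heegaard diagram for $\boldSigma(D\times I,T)$ with Heegaard surface $\boldSigma(D_{3/2},\,T\cap D_{3/2})$ and with $\boldalpha$ (resp.\ $\boldbeta$) curves the preimages of the cups (resp.\ caps) of $T$, and then observes that a backtracking component produces a nested cup/cap pair whose preimages are disjoint, so $\mathbb{T}_\alpha\cap\mathbb{T}_\beta=\emptyset$ and $SFH=0$ for the trivial reason. Your primary route via non--tautness is also correct --- $Y_0$ is a handlebody (hence irreducible), the disk $\widetilde\delta$ compresses $R_-$, and Juh\'asz's tautness criterion then forces $SFH(Y_0)=0$ --- but it invokes heavier machinery (the surface--decomposition theorem or the taut--iff--nonzero theorem) and leaves a small verification (essentiality of $\partial\widetilde\delta$ in $R_-$) to the reader. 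The paper's explicit diagram has the advantage that it handles both cases uniformly and is exactly the diagram used later to identify the triangle maps with the Khovanov merge/split maps; your tautness argument is more conceptual and requires no curve--drawing.
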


\begin{proof}
Let $T$ be an ABR tangle representative with connected components $T_1, \ldots, T_{k+a}$, where $T_1, \ldots, T_k$ satisfy $T_i \cap \partial(D \times I) = \emptyset$ and $T_{k+1}, \ldots, T_{k+a}$ satisfy $T_i \cap \partial(D \times I) \neq \emptyset$.

We may assume without loss of generality (by replacing $T$ by another representative in its equivalence class if necessary) that $T_i$ and $D_{\frac{3}{2}}$ intersect transversely and $T_i \cap D_{\frac{3}{2}} \neq \emptyset$ for all $i \in \{1, \ldots, k+a\}$.  Let $T_{\alpha}$ (resp., $T_\beta$) denote $T \cap D \times [-1,\frac{3}{2}]$ (resp., $T \cap D \times [\frac{3}{2},4]$).  Figures \ref{fig:SFHNoBackTrack} and \ref{fig:SFHBackTrack} then illustrate how to construct a Heegaard diagram for $\boldSigma(D \times I, T)$.

In particular, the sutured Heegaard surface, $\Sigma$, for a sutured Heegaard decomposition of $\boldSigma(D \times I, T)$ is $\boldSigma(D_{\frac{3}{2}},\vec{p})$, where $\vec{p} = T \cap D_{\frac{3}{2}}$.  Furthermore, if \[\pi:\boldSigma(D\times I,T) \rightarrow D \times I\] is the branched covering projection, and $\pi_{\Sigma}$ is its restriction to the $\{\frac{3}{2}\}$ level, then if $a_i$ (resp., $b_i$) is the image of any cup (resp., cap) under an isotopy that fixes $T_i \cap D_{\frac{3}{2}}$ and moves the cup (resp., cap) into $D_{\frac{3}{2}}$, then $\pi_{\Sigma}^{-1}(a_i)$ (resp., $\pi_\Sigma^{-1}(b_i)$) bounds a disk in $\boldSigma(D \times [-1,\frac{3}{2}],T_\alpha)$ (resp., $\boldSigma(D \times [\frac{3}{2},4], T_\beta)$) and hence is an $\alpha$ (resp., $\beta$) curve on $\Sigma$.  To see this, simply observe that if $A_i$ (resp., $B_i$) is the associated isotopy, then $\pi^{-1}(A_i)$ (resp., $\pi^{-1}(B_i)$) is a disk in in $\boldSigma(D \times I, T)$ with boundary $\alpha_i$ (resp., $\beta_i$).

If $T$ backtracks, then the above procedure produces an admissible Heegaard diagram with at least one pair $(\alpha_i,\beta_j)$ of $\alpha$ and $\beta$ curves that do not intersect.  This implies that $\mathbb{T}_\alpha \cap \mathbb{T}_\beta = \emptyset$ and, hence, $SFH(\boldSigma(D\times I, T)) = 0$, as desired.

If $T$ does not backtrack, then $\boldSigma(D \times I,T)$ is of the form \[(F \times I) \# k(S^1 \times S^2),\] where $F$ is an oriented surface with boundary.  Juh{\'a}sz's connected sum formula \cite[Prop. 9.15]{MR2253454}, coupled with the fact that $\widehat{HF}(Y)$ is a rank one free $\Wedge^*(H^\circ_1(Y)\otimes_\Z \Ztwo)$-module when $Y = \# k(S^1 \times S^2)$ (cf. \cite[Prop. 6.1]{MR2141852}) completes the proof.
\end{proof}

\begin{figure}
\begin{center}
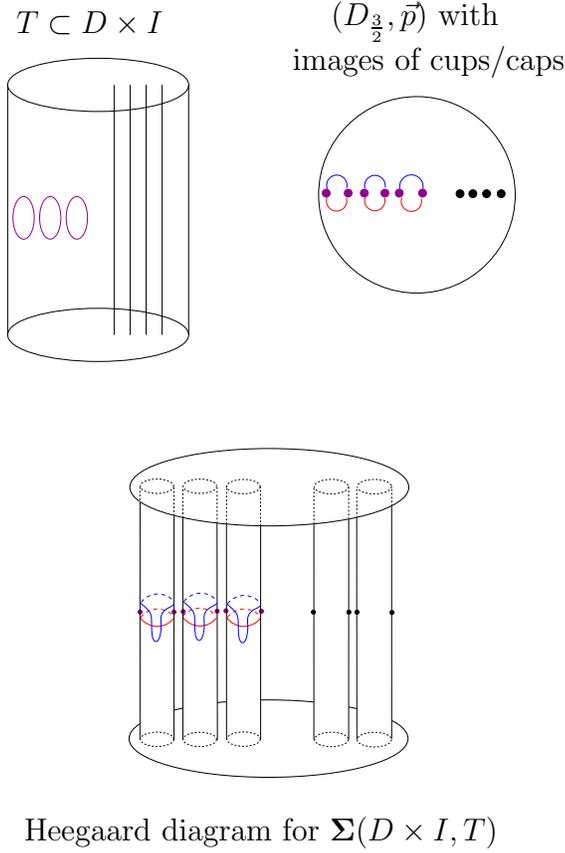
\end{center}
\caption{Construction of an admissible Heegaard diagram for $\boldSigma(D \times I, T)$ when $T$ is an ABR with no backtracking.  By an ambient isotopy relative to $(\partial D) \times I$, arrange for each connected component of $T$ to intersect $D_{\frac{3}{2}}$.  Then the Heegaard surface for a Heegaard decomposition of $\boldSigma(D \times I, T)$ is $\boldSigma(D,\vec{p})$, where $\vec{p} = T \cap D_{\frac{3}{2}}$ and the $\alpha$ (resp., $\beta$) curves correspond to the preimages of the projections of the cups (resp., caps) to $D_{\frac{3}{2}}$.}
\label{fig:SFHNoBackTrack}
\end{figure}

\begin{figure}
\begin{center}
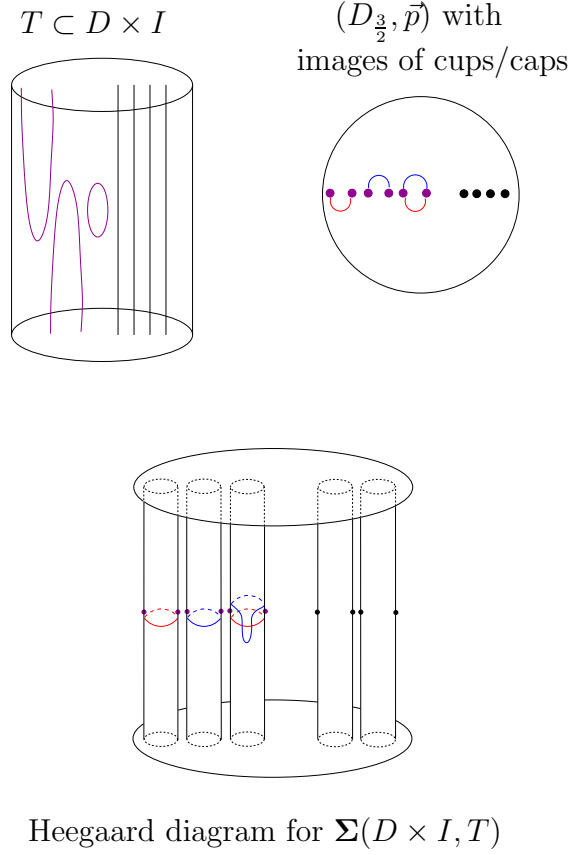
\end{center}
\caption{Construction of an admissible Heegaard diagram for $\boldSigma(D \times I, T)$ when $T$ is an ABR with backtracking.  The Heegaard diagram is constructed as described in the previous figure.  The $\alpha$ and $\beta$ curves corresponding to a backtracking cup/cap pair do not intersect, hence $\mathbb{T}_\alpha \cap \mathbb{T}_\beta = \emptyset$, implying that $SFH(\boldSigma(D \times I, T)) = 0$.}
\label{fig:SFHBackTrack}
\end{figure}

\subsection{Equivalence of Khovanov and Sutured Floer functors} \label{sec:KhSFFunctorEquiv}
We will now prove that the two functors introduced in
Subsections \ref{sec:KhFunctor} and \ref{sec:SFFunctor} are naturally isomorphic.  As before, singular (co)homology groups, where unspecified, will be taken with $\Z$ coefficients.  We will use $H_*^\circ(-)$ to denote $H_*(-;\Z)/\mbox{Tors}$ and $Hom(-)$ to denote $Hom(-,\Z)$.

\begin{proposition}\label{prop:KhovSFHFunctEquiv}
(analogue of \cite[Prop. 6.1]{MR2141852})
For each ABR tangle $T \subset D \times I$ there is a canonical isomorphism:
$$
\psi_T\colon V(T)\stackrel{\cong}{\longrightarrow}
SFH(\boldSigma(D\times I,T))
$$
which is natural, in the sense that, whenever $S:T'\rightarrow T''$
is a saddle cobordism, the following
diagram commutes:
$$
\begin{CD}
V(T')@>{\mathcal{V}_S}>>V(T'')\\
@V{\psi_{T'}}VV @VV{\psi_{T''}}V \\
SFH(\boldSigma(D\times I,T'))@>{\mathcal{F}_S}>> 
SFH(\boldSigma(D\times I,T''))
\end{CD}
$$
\end{proposition}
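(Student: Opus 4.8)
The plan is to imitate the proof of \cite[Prop.~6.1]{MR2141852}: use Lemma \ref{lemma:FloerFunctor} and the Heegaard diagrams constructed there to pin down $\psi_T$ on objects, and then reduce the naturality statement to a single local holomorphic‑triangle computation that is already carried out in \cite{MR2141852}.

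\textbf{Objects.} Set $Y=\boldSigma(D\times I,T)$. If $T$ backtracks then $V(T)=0$ and, by Lemma \ref{lemma:FloerFunctor}, $SFH(Y)=0$, so there is nothing to prove. Otherwise Lemma \ref{lemma:FloerFunctor} gives $Y\cong (F\times I)\,\#\,k(S^1\times S^2)$, with $k$ the number of closed components $T_1,\dots,T_k$ of $T$, and exhibits $SFH(Y)$ as a rank‑one free $\Lambda^*(H^\circ_1(Y,\partial Y)\otimes_\Z\Ztwo)$‑module; here $H^\circ_1(Y,\partial Y)\cong\Z^k$. Since the top relative Maslov grading of $SFH(Y)$ is one‑dimensional over $\Ztwo$ (combining Juh\'asz's connected sum formula with the top generator $\theta$ of $\widehat{HF}(\#^k S^1\times S^2)$ used in Section \ref{sec:LinkSurg}), $SFH(Y)$ has a distinguished generator $\Theta_Y$, and $V(T)=\Lambda^* Z(T)$ has the distinguished generator $1\in\Lambda^0$. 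It therefore suffices to fix a natural isomorphism $Z(T)\otimes\Ztwo\cong H^\circ_1(Y,\partial Y)\otimes\Ztwo$ and let $\psi_T$ be the induced isomorphism of exterior algebras with $\psi_T(1)=\Theta_Y$. On generators this identification sends $[T_i]\mapsto[\widetilde\delta_i]$, where $\delta_i\subset D\times I$ is any embedded arc from $T_i$ to $\partial(D\times I)$ meeting $T$ only at its endpoint on $T_i$, and $\widetilde\delta_i=\pi^{-1}(\delta_i)$ is its connected preimage, a properly embedded arc in $(Y,\partial Y)$; using the diagram $\Sigma=\boldSigma(D_{3/2},\vec p)$ of Lemma \ref{lemma:FloerFunctor} together with Proposition \ref{prop:H1andH2} one checks that the $[\widetilde\delta_i]$ form a basis and are independent of the chosen arcs. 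Concretely, the $2^k$ points of $\mathbb{T}_\alpha\cap\mathbb{T}_\beta$ for that diagram are indexed by the subsets $\mathfrak s\subseteq\{1,\dots,k\}$, and $\psi_T$ carries $\bigwedge_{i\in\mathfrak s}[T_i]$ to the point labelled $\mathfrak s$.

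\textbf{Naturality.} Let $S\colon T'\to T''$ be a saddle cobordism. If $S$ is a zero saddle then one of $T',T''$ backtracks, so $\mathcal V_S=0$ and $\mathcal F_S=0$ by Lemma \ref{lemma:FloerFunctor}, and the square commutes trivially. If $S$ is a merge (the split case being dual), the cobordism alters $T'$ only inside a ball $B\subset D\times I$, so $\boldSigma(D\times I,T'')$ is obtained from $\boldSigma(D\times I,T')$ by surgery on a knot contained in $\pi^{-1}(B)$, and one can choose the subordinate triple diagram $(\Sigma,\boldalpha,\boldeta',\boldeta'')$ of Section \ref{sec:SFFunctor} so that $\boldeta'$ and $\boldeta''$ differ only in the single curve meeting the local region, all other curves — hence the contributions of the components of $T'$ untouched by $S$ and of the $F\times I$ factor — being identical. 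With this choice every relevant holomorphic triangle is supported in the local region, so the count defining $\mathcal F_S$ factors as the identity on the untouched contributions tensored with the \emph{local} triangle count, and the latter is precisely the one performed in \cite[Prop.~6.1]{MR2141852}: it is the Frobenius multiplication on the $\widehat{HF}(S^1\times S^2)$‑factors of the merged circles, together with the usual vanishing when one of the merged components meets $\partial(D\times I)$ (the reduced case), which under the identification of the previous paragraph is exactly $\mathcal V_m$. To turn this into a proof without re‑doing all the curve counts, I would combine the local statement with the $\Lambda^*H_1$‑naturality of Section \ref{sec:MultiDiag}: by Proposition \ref{prop:H1action4man} the triangle map $\mathcal F_S$ is equivariant for the $H^\circ_1(X,Z)\otimes\Ztwo$‑action, which surjects onto and acts through the module structures of Proposition \ref{H1ActionY} at the two ends, while $\mathcal V_m$ (resp.\ $\mathcal V_\Delta$) is visibly equivariant for the corresponding $\Lambda^*Z$‑actions; hence commutativity of the square need only be checked on the single generator $1\in V(T')$, i.e.\ on one triangle count between distinguished generators, which is the local model computation of \cite[Prop.~6.1]{MR2141852}.

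\textbf{Expected main obstacle.} The delicate point is the localization: arranging the sutured Heegaard triple diagram so that it agrees with a standard local model outside a region disjoint from all holomorphic triangles of interest, so that the triangle count genuinely factors, and verifying that this is compatible with the admissibility and $\mathrm{Spin}^c$‑bookkeeping of Section \ref{sec:MultiDiag} — in particular that no contribution is lost from the $F\times I$ summand or from the sutures, and that the $\Lambda^*H_1$‑equivariance argument really reduces the verification to the closed‑manifold count of \cite{MR2141852}. Once the diagram is localized, the holomorphic curve analysis is exactly that of \cite{MR2141852} and no new analytic input is required.
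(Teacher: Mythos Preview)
Your proposal is correct and follows essentially the same route as the paper: identify $Z(T)$ with $H^\circ_1(Y,\partial Y)\otimes\Ztwo$ via preimages of arcs from the components of $T$ to the boundary (the paper's $\mathcal{T}_i=\pi^{-1}(\tau_i)$ are your $\widetilde\delta_i$), check the triangle map on the distinguished top generator by direct inspection as in \cite[Prop.~6.1]{MR2141852}, and then propagate to all of $\Lambda^*$ using the $H^\circ_1(X,Z)$--equivariance of Proposition~\ref{prop:H1action4man}. Your ``localization'' discussion is more than the paper actually needs---once you invoke $\Lambda^*H_1$--naturality, only the single top-generator triangle count must be verified, and no factorization of the full triangle map is required---so the obstacle you flag is less serious than you suggest.
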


Here, $\mathcal{V}_S, \mathcal{F}_S$ are $\mathcal{V}_m, \mathcal{F}_m$ (resp., $\mathcal{V}_\Delta$, $\mathcal{F}_\Delta$) if $S$
is a merge (resp., split) saddle cobordism, and  $\mathcal{V}_S, \mathcal{F}_S$ are both zero if $S$ is a zero saddle cobordism (and, hence, one of $T'$
or $T''$ backtracks).

\begin{proof}
We begin by exhibiting a canonical isomorphism \[\Psi_T: V(T) \rightarrow SFH(\boldSigma(D \times I, T)).\]

If $T$ backtracks, $SFH(\boldSigma(D\times I,T))=0=V(T)$ by Definition~\ref{defn:KhFunctor} and Lemma~\ref{lemma:FloerFunctor}, so there is nothing to prove.

Now suppose $T$ does not backtrack. Definition~\ref{defn:KhFunctor} and Lemma~\ref{lemma:FloerFunctor} then tell us that
$$
V(T)=\Wedge^*Z(T)
$$
and
$$
SFH(Y)=\Wedge^*(H^\circ_1(Y,\partial Y)\otimes_\Z \Z_2)
$$
where $Y:=\boldSigma(D\times I,T)$. Since $V(T)$ and $SFH(Y)$ are both rank one modules over a freely-generated exterior algebra, it suffices to exhibit an isomorphism between their generating sets.  More precisely, we will constuct generators for $H^\circ_1(Y,\partial Y)\otimes_\Z \Ztwo$ which are in one to one correspondence with the formal exterior algebra generators of $V(T)$.


To this end, we describe a procedure analogous to the one given in the proof of \cite[Prop. 6.2]{MR2141852}, which produces a canonical basis for $H^\circ_1(Y,\partial Y)\otimes_\Z \Ztwo$.  Begin by picking a basepoint $c\in\{(x,y,z)\in (\partial D)\times I\,\,|\,\,y>0\}$ and a path, $\tau_i$, connecting $c$ to a point, $p_i$, on each connected component $T_i \subset T$.  Recall that we are assuming that $T$ is the disjoint union of $k+a$ connected components

Let \[\pi:(Y = \boldSigma(D\times I, T))\rightarrow (D \times I)\] denote the branched covering projection, and consider $\mathcal{T}_i := \pi^{-1}(\tau_i) \subset Y$.

\begin{lemma}
\[H^\circ_1(Y,\partial Y) \otimes_\Z \Ztwo = Span_{\Ztwo}([\mathcal{T}_1], \ldots, [\mathcal{T}_{k+a}])/[\mathcal{T}_{k+1}]\sim\ldots\sim[\mathcal{T}_{k+a}]\sim 0.\]
\end{lemma}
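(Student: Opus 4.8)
The plan is to compute $H_1(Y,\partial Y;\Z)$ directly from the branched-cover structure and to identify the classes $[\mathcal{T}_i]$ explicitly. First I would observe that $Y = \boldSigma(D\times I, T)$ deformation retracts onto a CW-complex built from the branched cover, and that by Lefschetz duality $H^\circ_1(Y,\partial Y)\cong H_2^\circ(Y)$, which is perhaps cleaner to handle; however, it is more direct to work with $H_1(Y,\partial Y)$ itself via the long exact sequence of the pair. The key geometric input is that $\pi\colon Y\to D\times I$ is a double branched cover along $T$, so $Y - \pi^{-1}(T)$ is an honest (unbranched) double cover of $(D\times I) - T$, and the arcs $\tau_i$ (each running from the boundary basepoint $c$ to $T_i$) lift to the arcs $\mathcal{T}_i = \pi^{-1}(\tau_i)$, which are arcs in $Y$ with one endpoint on $\partial Y$ (over $c$) and the other on the branch locus $\widetilde T = \pi^{-1}(T)$. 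Since each $\tau_i$ is an arc transverse to $T$ meeting it once, near $T_i$ the preimage $\pi^{-1}(\tau_i)$ is a single arc passing through the branch locus, so $[\mathcal{T}_i]$ is a well-defined class in $H_1(Y,\partial Y)$.

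Next I would set up the Mayer--Vietoris / long-exact-sequence computation mirroring the proof of \cite[Prop. 6.2]{MR2141852}. Writing $(D\times I) - T$ as built from a ball with $k+a$ unknotted arcs removed, its fundamental group is free on meridional loops $\mu_1,\ldots,\mu_{k+a}$, and the double cover is classified by the homomorphism sending each $\mu_i\mapsto 1\in\Z_2$. The Reidemeister--Schreier computation (or the standard transfer argument) then shows that $H_1$ of the total space $Y-\widetilde T$, and subsequently $H_1(Y,\partial Y)$ after capping off the meridians of the branch locus, is freely generated over $\Z_2$ by the lifted arcs $[\mathcal{T}_i]$ for the \emph{closed} components $T_1,\ldots,T_k$, while the arcs $[\mathcal{T}_{k+1}],\ldots,[\mathcal{T}_{k+a}]$ corresponding to components meeting $\partial(D\times I)$ become boundary-parallel and hence die in $H_1(Y,\partial Y)$ — precisely because their far endpoints can be pushed out to $\partial Y$ along $T_i$. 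This is where the quotient $[\mathcal{T}_{k+1}]\sim\ldots\sim[\mathcal{T}_{k+a}]\sim 0$ comes from. I would phrase this using the explicit Heegaard diagram constructed in the proof of Lemma~\ref{lemma:FloerFunctor} (Figure~\ref{fig:SFHNoBackTrack}), where $Y\cong (F\times I)\#k(S^1\times S^2)$: under that identification the $k$ free generators $[\mathcal{T}_1],\ldots,[\mathcal{T}_k]$ correspond to the $H_1$ generators of the $k$ copies of $S^1\times S^2$, and Proposition~\ref{prop:H1andH2} gives the clean homological bookkeeping.

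The main obstacle I anticipate is the careful verification that the $[\mathcal{T}_i]$ for $i\le k$ really are \emph{linearly independent} and \emph{generate} (rather than merely span) $H^\circ_1(Y,\partial Y)\otimes\Z_2$ — i.e., matching the rank of the branched-cover $H_1$ against $k$ — and doing so in a way that is manifestly natural, so that the subsequent naturality statement in Proposition~\ref{prop:KhovSFHFunctEquiv} goes through. I would handle independence by intersecting $\mathcal{T}_i$ with the compressing disks $\pi^{-1}(A_j)$ (the disks bounding the $\alpha$- and $\beta$-curves described in the proof of Lemma~\ref{lemma:FloerFunctor}): $\mathcal{T}_i$ meets the disk associated to the cup/cap on $T_i$ algebraically once and all others zero, giving the nondegenerate pairing that forces independence. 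That the span is everything follows either from the $\#k(S^1\times S^2)$ model directly or from a rank count via the Euler-characteristic/Riemann--Hurwitz bookkeeping already used in Proposition~\ref{prop:BalancedCovers}. Once independence and spanning are in hand, and the vanishing of $[\mathcal{T}_j]$ for $j>k$ is established as above, the claimed presentation of $H^\circ_1(Y,\partial Y)\otimes_\Z\Ztwo$ follows.
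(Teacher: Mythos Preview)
Your core argument --- using the identification $Y \cong (F\times I)\# k(S^1\times S^2)$ and pairing the $[\mathcal{T}_i]$ against the $k$ sphere classes to get independence, then a rank count for spanning --- is exactly the paper's approach, and the paper's proof is essentially just your final paragraph. The paper bypasses all of the Reidemeister--Schreier / transfer discussion: it simply notes that $H_1(Y,\partial Y)\cong H^2(Y)\cong H^2(\# k(S^1\times S^2))$ is torsion-free of rank $k$, and that $PD[\mathcal{T}_i]$ is the Hom-dual $[\mathcal{S}_i^*]$ of the $i$-th $S^2$, which immediately gives the basis.

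Two small inaccuracies in your write-up are worth correcting. First, since $c\notin T$, the preimage $\pi^{-1}(c)$ consists of \emph{two} points on $\partial Y$, while $\pi^{-1}(p_i)$ is a single branch point; thus $\mathcal{T}_i=\pi^{-1}(\tau_i)$ is an arc with \emph{both} endpoints on $\partial Y$, not one on the branch locus. Second, to pair with classes in $H_1(Y,\partial Y)$ you need closed surfaces representing $H_2(Y)$, so you should intersect $\mathcal{T}_i$ with the spheres $\mathcal{S}_j$ (cup disk glued to cap disk) rather than with the individual compressing disks $\pi^{-1}(A_j)$, which have boundary. With those fixes, your argument and the paper's coincide.
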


\begin{proof}
It is convenient to make the identification $H^\circ_1(Y,\partial Y)\otimes_\Z \Ztwo \cong H_1(Y,\partial Y;\Ztwo)$.  This follows from the universal coefficient theorem, since $H_1(Y,\partial Y)$ has no torsion.  

To see this, note that $Y = (F \times I) \# k(S^1 \times S^2)$.  Therefore, $H_1(Y,\partial Y) \cong H^2(Y) \cong H^2(k(S^1 \times S^2))$ is torsion-free.
Furthermore, it is easy to see that $\{\mathcal{T}_i\}_{i=1}^k$ form a basis, as claimed, since the set of Hom duals, $\{[\mathcal{S}_1^*], \ldots, [\mathcal{S}_k^*]\}$, of the $k$ linearly-independent $S^2$'s forms a basis for $H^2(Y;\Ztwo)$.  But $[\mathcal{S}_i^*] = PD([\mathcal{T}_i])$ for each $i$, thus $\{\mathcal{T}_i\}_{i=1}^k$ is a basis for $H^\circ_1(Y,\partial Y)\otimes_\Z \Ztwo$.
\end{proof}

Comparing with the definition of $Z(T)$, it is clear that the assignment $[T_i] \mapsto [\mathcal{T}_i]$
induces the desired isomorphism \[\Psi_T:V(T) \mapsto SFH(\boldSigma(D\times I, T)).\] 

Naturality of this isomorphism under saddle cobordisms follows exactly as in \cite[Prop. 6.1 \& Prop. 6.2]{MR2141852}.  In particular:

\begin{enumerate}
\item If $S_m$ is a merge saddle cobordism from the ABR projection, $\cP(T')$, to the ABR projection, $\cP(T'')$,
and $Y'$ (resp., $Y''$) denote the
sutured manifold $\boldSigma(D\times I,T')$
(resp., $\boldSigma(D\times I,T'')$).
Then $\mathcal{F}_m$ is the composition of the maps
\[
\begin{CD} H_1(Y',\partial Y';\Ztwo) @>p>> 
\frac{H_1(Y',\partial Y';\Ztwo)}{[\mathcal{T}_i']\sim [\mathcal{T}_j']
}@>a>> H_1(Y'',\partial Y'';\Ztwo)
\end{CD}
\]
\item If $S_\Delta$ is a split saddle cobordism from $\cP(T'')$ to $\cP(T')$,
and $Y''$ (resp., $Y'$) denote the
sutured manifold $\boldSigma(D\times I,T'')$
(resp., $\boldSigma(D\times I,T')$).
Then $\mathcal{F}_\Delta$ is the composition of the maps
$$
\begin{CD}H_1(Y'',\partial Y'';\Ztwo) @>{a^{-1}}>> 
\frac{H_1(Y',\partial Y';\Ztwo)}{[\mathcal{T}_i']\sim [\mathcal{T}_j']}@>f>>
H_1(Y',\partial Y';\Ztwo).
\end{CD}
$$
\end{enumerate}

A direct inspection of the associated Heegaard triple-diagram as in \cite[Prop 6.1]{MR2141852} tells us that the map behaves as stated on the canonical top-degree generator.  The rest follows from the naturality of triangle maps under the $4$--manifold $H^\circ_1$--action (see Proposition \ref{prop:H1action4man}), once we note that \[[\mathcal{T}'_m] = [\mathcal{T}''_n] \in H^\circ_1(X, Z)\] iff $T'_m \subset T'$ is one of the two components which merges to yield $T''_n \subset T''$.
\end{proof}

\subsection{Proof of Spectral Sequence from Khovanov to Sutured Floer}

Our aim in the present section is to prove:

\begin{theorem}\label{thm:SpecSeq}
Let $K \subset S^3$ be an oriented knot and $\overline{K} \subset S^3$ its mirror.  For each $n \in \mathbb{Z}_{> 0}$, there is a spectral sequence, whose $E^2$ term is $\widetilde{Kh}_n(\overline{K})$ and whose $E^\infty$ term is $SFH(\boldSigma(D \times I,T^n))$.
\end{theorem}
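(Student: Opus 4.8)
The plan is to exhibit the spectral sequence of Theorem \ref{thm:SpecSeq} as an instance of the link surgeries spectral sequence of Proposition \ref{prop:LinkSurg}, applied to a framed link built from the crossings of a projection of $T^n$, and then to identify its $E^1$ page together with the $d_1$ differential with the Khovanov cube of resolutions, using the functor equivalence of Proposition \ref{prop:KhovSFHFunctEquiv}. Since all theories here are taken with $\Ztwo$ coefficients, no sign bookkeeping will be required.

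First I would fix a projection $\cP(T^n)\subset A$ of the balanced tangle $T^n$ and label its crossings $1,\ldots,\ell$. For $\cI=(m_1,\ldots,m_\ell)\in\{0,1,\infty\}^\ell$, let $\cP_\cI(T^n)$ be the tangle obtained by applying the $0$-resolution, $1$-resolution, or trivial resolution at the $i$-th crossing according as $m_i$ is $0$, $1$, or $\infty$ (Figure \ref{fig:Resolutions}), so that $\cP_{(\infty,\ldots,\infty)}(T^n)=T^n$. Resolving a crossing alters neither $\partial T^n$ nor $\#(T^n\cap D_\pm)$, so every $\cP_\cI(T^n)$ is an admissible, balanced tangle, and is an ABR whenever $\cI\in\{0,1\}^\ell$. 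By the standard fact (cf.\ \cite{MR2141852}) that the double branched covers of the three members of an unoriented skein triad differ by a triple $(\infty,0,1)$ of surgeries on an unknotted circle, there is an $\ell$-component framed link $L=K_1\cup\ldots\cup K_\ell$ in $\boldSigma(D\times I,T^n)$, with $K_i$ an unknot sitting near the $i$-th crossing, such that $0$- and $1$-surgery on $K_i$ induce the $0$- and $1$-resolutions; hence, with the conventions of Section \ref{sec:LinkSurg}, $Y(\cI)=\boldSigma(D\times I,\cP_\cI(T^n))$ for every $\cI$, and $Y((\infty,\ldots,\infty))=\boldSigma(D\times I,T^n)$. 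Each $Y(\cI)$ is a balanced sutured manifold by Proposition \ref{prop:BalancedCovers}, applied (as in the proof of Lemma \ref{lemma:FloerFunctor}) using that $\#(\partial T^n\cap R_+)=\#(\partial T^n\cap R_-)$ since $T^n$ is balanced. Choosing a bouquet for $L$ and a sutured Heegaard multi-diagram as in Section \ref{sec:LinkSurg}, and making it admissible by the winding construction of Section \ref{sec:Admissible} (stabilizing if necessary to arrange $d>2$), places us in position to apply Proposition \ref{prop:LinkSurg}.

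That proposition yields a spectral sequence with
\[E^1\;\cong\;\bigoplus_{\cI\in\{0,1\}^\ell}SFH\bigl(\boldSigma(D\times I,\cP_\cI(T^n))\bigr),\qquad E^\infty\;\cong\;SFH\bigl(\boldSigma(D\times I,T^n)\bigr),\]
the $d_1$ differential being induced by $D_1$, the sum of the holomorphic-triangle maps between the sutured Floer homologies of immediate successors. This already gives the claimed $E^\infty$ term. For the $E^2$ term, observe that each $\cP_\cI(T^n)$ with $\cI\in\{0,1\}^\ell$ is an ABR, so Lemma \ref{lemma:FloerFunctor} and Proposition \ref{prop:KhovSFHFunctEquiv} provide canonical isomorphisms $\psi_{\cP_\cI(T^n)}\colon V(\cP_\cI(T^n))\stackrel{\cong}{\longrightarrow}SFH(\boldSigma(D\times I,\cP_\cI(T^n)))$, both sides vanishing exactly when $\cP_\cI(T^n)$ backtracks. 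Passing from $\cI$ to an immediate successor $\cI'$ inside $\{0,1\}^\ell$ changes a single $0$-resolution to a $1$-resolution, i.e.\ is a saddle cobordism between ABR projections, and the corresponding summand of $D_1$ is, by the definition of the sutured Floer functor in Section \ref{sec:SFFunctor}, the map $\mathcal{F}_m$ or $\mathcal{F}_\Delta$ (or $0$ when one side backtracks). The naturality clause of Proposition \ref{prop:KhovSFHFunctEquiv} then says the $\psi$'s intertwine this summand with the corresponding Khovanov saddle map $\mathcal{V}_m$ or $\mathcal{V}_\Delta$. Summing over all immediate-successor pairs, $(E^1,d_1)$ is isomorphic, as a complex, to the Khovanov cube complex $\bigl(\bigoplus_{\cI\in\{0,1\}^\ell}V(\cP_\cI(T^n)),\,D\bigr)=CV(\cP(T^n))$; therefore $E^2\cong H_*(CV(\cP(T^n)))=V(T^n)=\widetilde{Kh}_n(\overline{K})$ by Remark \ref{rmk:ReltoKhov}.

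I expect the main obstacle to be this last identification --- that each triangle-counting summand of $D_1$ agrees, under the $\psi$'s, with the Khovanov saddle map. It rests, first, on knowing that the sutured Heegaard triple-diagrams occurring inside the admissible multi-diagram for $L$ are, up to moves not affecting $SFH$, the ones used to define $\mathcal{F}_m$ and $\mathcal{F}_\Delta$ --- which holds because in both constructions the Heegaard surface is the double branched cover of $D_{3/2}$ with $\alpha$-, $\beta$-, and $\eta$-curves arising from cups and caps --- and, second, on the naturality statement of Proposition \ref{prop:KhovSFHFunctEquiv} itself, whose proof does the genuine work of pinning down the triangle map on the canonical top-degree generator and transporting it along the $\Wedge^* H_1$-action. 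The remaining points (balancedness of the $Y(\cI)$, and simultaneous admissibility of all the multi-diagrams in play) are routine consequences of the results already established in Sections \ref{sec:StandardDefn}--\ref{sec:LinkSurg}, and assembling them completes the proof.
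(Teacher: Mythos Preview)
Your proposal is correct and follows essentially the same approach as the paper: apply the link surgeries spectral sequence (Proposition~\ref{prop:LinkSurg}) to the framed link in $\boldSigma(D\times I,T^n)$ arising from the crossings of a projection, then use the functor equivalence (Proposition~\ref{prop:KhovSFHFunctEquiv}) to identify $(E^1,d_1)$ with the Khovanov cube, so that $E^2\cong V(T^n)=\widetilde{Kh}_n(\overline{K})$. The paper packages this as a special case of a more general Proposition~\ref{prop:SpecSeq} for arbitrary admissible balanced tangles, but the argument is otherwise the same, and your version in fact spells out a few routine checks (balancedness via Proposition~\ref{prop:BalancedCovers}, admissibility via winding) that the paper leaves implicit.
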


\begin{proof}
Recall (see Remark \ref{rmk:ReltoKhov}) that $\widetilde{Kh}_n(\overline{K}) = V(T^n)$, where $T^n \subset D \times I$ is obtained by removing a point from $K$ and taking the $n$--cable.  Therefore, Theorem \ref{thm:SpecSeq} can be seen as a specific instance of the following more general result:  

\begin{proposition} \label{prop:SpecSeq}
Let $T \subset D \times I$ be an admissible, balanced tangle.  Then there is a spectral sequence whose $E^2$ term is $V(T)$ and whose $E^\infty$ term is $SFH(\boldSigma(D \times I, T))$. 
\end{proposition}

\begin{proof}
Choose a projection $\mathcal{P}(T)$ as in Definition \ref{defn:projection} and assign labels $1, \ldots, \ell$ to its crossings.  Letting $T^i_0$ (resp., $T^i_1$) denote the tangle whose projection is obtained from $\mathcal{P}(T)$ by replacing a neighborhood of the crossing by the appropriate resolution as in Figure \ref{fig:Resolutions}, we see that $\boldSigma(D\times I,T)$, $\boldSigma(D \times I,T^i_0)$, and $\boldSigma(D\times I,T^i_1)$ are sutured manifolds related by a triple ($\infty, 0,$ and $1$, respectively) of surgeries on the knot in $\boldSigma(D \times I, T)$ which is the preimage of the dotted arc in Figure \ref{fig:PreimageK}.  

\begin{figure}
\begin{center}
\input{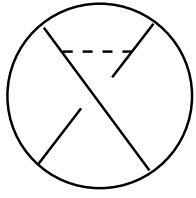}
\end{center}
\caption{$\boldSigma(D \times I, T_0)$ and $\boldSigma(D \times I, T_1)$ are obtained from $\boldSigma(D \times I, T)$ by doing $0$ and $1$ surgery, resp., on the preimage of the dotted arc.}
\label{fig:PreimageK}
\end{figure}

Now consider the link $L \subset \boldSigma(D \times I, T)$ consisting of the preimages of all such dotted arcs at all crossings of $\mathcal{P}(T)$.  Every sutured manifold obtainable as the double branched cover of some combination of resolutions on some subset of the crossings $1, \ldots, \ell$ can then be described as $Y(\mathcal{I})$, where $\mathcal{I} \in \{0,1,\infty\}^\ell$, in the notation of Section \ref{sec:LinkSurg}.  Note that $Y(\infty, \ldots, \infty) = \boldSigma(D \times I, T)$.

By the link surgeries spectral sequence (Proposition \ref{prop:LinkSurg}), we conclude that there is a spectral sequence whose $E^2$ term is given by the homology of the complex \[\left(\bigoplus_{\mathcal{I} \in \{0,1\}^\ell} SFH(Y(\mathcal{I})),D_1\right)\] where \[D_1|_{SFH(Y(\mathcal{I}))}: SFH(Y(\mathcal{I})) \rightarrow SFH(Y(\mathcal{I}'))\] is the map induced by counting holomorphic triangles (see Equation \ref{eqn:Diff}) for $\mathcal{I}'$ an immediate successor of $\mathcal{I}$.

But Proposition \ref{prop:KhovSFHFunctEquiv} tells us that this complex is the same as the complex whose homology is $V(T)$.  The proposition (and, hence, the theorem) follows.
\end{proof}
\end{proof}

\section{Relationship to Knot Floer Homology} \label{sec:RelKnotFloer}
In this section we prove:


\begin{theorem} \label{thm:KnotFloerRel}
Let $n \in \mathbb{Z}_{> 0}$.

\[SFH(\boldSigma(D \times I,T^n)) \cong \left\{\begin{array}{cc}
    \widehat{HF}(\boldSigma(S^3,K)) & \mbox{if $n = 1$},\\
    \widehat{HFK}(S^3, K \# K) & \mbox{if $n$ is even},\\
    \widehat{HFK}(\boldSigma(S^3,K), \widetilde{K}) & \mbox{if $n>1$ and odd}
    \end{array}\right.\] 
\end{theorem}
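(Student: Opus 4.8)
The plan is to identify the sutured manifold $\boldSigma(D\times I,T^n)$ explicitly and then to recognize it -- after surface decompositions that leave $SFH$ unchanged -- as one of the standard sutured manifolds of Section~\ref{sec:StandardDefn}. The topological input is as follows. Write $D\times I\cong B^3$ and $S^3=B^3\cup B^3_\infty$ as a union of two balls along a sphere meeting $K$ in two points, so that $T^1:=K\cap B^3$ is a $1$--$1$ tangle with $\overline{T^1}=K$; then $T^n$ lies inside a tubular neighborhood $N\cong D^2\times J$ of $T^1$ as $n$ parallel arcs. Split $B^3=N\cup(B^3\setminus N)$ along the frontier annulus. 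Since the sutured branched double cover is compatible with the gluing of Definition~\ref{defn:gluing} (cf.\ Lemma~\ref{lemma:GlueHD}), this exhibits
\[
\boldSigma(D\times I,T^n)\;=\;\bigl(F_n\times I\bigr)\ \cup\ \boldSigma\bigl(B^3\setminus N,\varnothing\bigr),
\]
where $F_n:=\boldSigma(D^2,n\ \text{points})$ is the product sutured surface with $\chi(F_n)=2-n$, having one boundary circle when $n$ is odd and two when $n$ is even. The second factor is the unbranched double cover of $B^3\setminus N\cong S^3\setminus N(K)$ in which the meridian of $T^1$ -- freely homotopic to the product of the $n$ meridians of $T^n$ -- maps to $n\bmod 2$; since $(\partial D)\times\{\tfrac32\}$ is isotopic in $B^3\setminus N$ to a meridian of $T^1$, the inherited sutures are meridional. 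Hence for $n$ odd this factor is $\boldSigma(S^3,K)\setminus N(\widetilde K)$ carrying the two meridional sutures of Example~\ref{example:knot}, while for $n$ even it is two disjoint copies of $S^3\setminus N(K)$, each with two meridional sutures. In the special case $n=1$ the first factor is a solid cylinder ($F_1=D^2$), and re--gluing it amounts to attaching a two--handle along the meridian $\widetilde\mu$ of $\widetilde K$ and then capping off two balls, so that $\boldSigma(D\times I,T^1)=\boldSigma(S^3,K)-D$ in the sense of Example~\ref{example:threemanhat}; thus $SFH(\boldSigma(D\times I,T^1))=\widehat{HF}(\boldSigma(S^3,K))$.

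For $n\geq 2$ the plan is to strip the product factor $F_n\times I$ by decomposing along a (possibly disconnected) surface $S=\gamma\times I$, where $\gamma$ is a union of pairwise disjoint essential properly embedded arcs in $F_n$ cutting it down to a disk. Each component of $S$ is a product disk (its boundary meets $s(\Gamma)$ in two points), and decomposing along $S$ leaves the second factor untouched and reduces the first to a collar. What remains is, for $n$ odd, $\boldSigma(S^3,K)\setminus N(\widetilde K)$ with its two meridional sutures -- giving $\widehat{HFK}(\boldSigma(S^3,K),\widetilde K)$ -- and for $n$ even the two copies of $S^3\setminus N(K)$ joined along a meridional annulus; the latter is $S^3\setminus N(K\#K)$ with its two meridional sutures, because the complement of a connected sum is the annular union of the two complements, giving $\widehat{HFK}(S^3,K\#K)$ (the same group as $\widehat{HFK}(S^3,K\#K^r)$, since $K\#K$ and $K\#K^r$ have homeomorphic complements). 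It remains to see that the decomposition preserves $SFH$ rather than merely producing a direct summand: one verifies that the decomposition is well--groomed in the sense of \cite[Def.~4.3]{MR2390347} and that every $\mathrm{Spin}^c$ structure of $\boldSigma(D\times I,T^n)$ is outer with respect to $S$ -- which holds because every $3$--manifold in the construction is a branched cover of a handlebody, so its $SFH$ is supported in the $\mathrm{Spin}^c$ structures with $c_1$ evaluating trivially on $S$ -- whence \cite[Thm.~1.3]{MR2390347} together with \cite[Lems.~4.5, 5.4]{MR2390347} gives the isomorphism.

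The main obstacle is the sutured bookkeeping: carefully tracking the suture curves through the branched--cover gluing and through the surface decomposition, and above all verifying the outerness ($\mathrm{Spin}^c$--evaluation) hypothesis that makes the surface decomposition theorem return \emph{all} of $SFH$. A secondary point requiring care is the even case -- confirming that the closure of $T^n$ (and hence the core of the Dehn filling that recovers the knot from the branched cover of $B^3\setminus N$) is $K\#K$, and that the resulting sutures are exactly meridional; this is where the precise framing conventions for the cabling $T^n$ enter.
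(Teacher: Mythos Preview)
Your topological decomposition of $\boldSigma(D\times I,T^n)$ as $(F_n\times I)\cup Y$ matches the paper's Proposition~\ref{prop:GlueSutMan}, and the case $n=1$ is fine. The gap is in the surface decomposition for $n\geq 2$. You take $\gamma\subset F_n$ to be a union of properly embedded arcs, so $\partial\gamma\subset\partial F_n$; but in the glued manifold $\partial F_n\times I$ is precisely the annulus along which $F_n\times I$ is attached to $Y$ --- it lies in the \emph{interior} of $\boldSigma(D\times I,T^n)$, not in $\Gamma$. (The genuine sutures live entirely on the $Y$ side: for $n$ odd the single suture is $\pi^{-1}(\mu')$, and for $n$ even the two sutures are $\mu_1'$ and $\mu_2$.) Hence your surface $S=\gamma\times I$ has the portion $(\partial\gamma)\times I$ of its boundary in the interior of the $3$--manifold; it is not properly embedded, and in particular it is not a product disk meeting $s(\Gamma)$ in two points. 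Extending the arcs through $Y$ to reach the actual sutures would require a product structure on the knot complement, which is unavailable when $K$ is knotted.

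The paper's remedy is to use a nontrivial simple \emph{closed} curve $C\subset F_n$ and decompose along the product annulus $S=C\times I$: since $C$ is closed, $\partial S\subset F_n\times\partial I\subset R_+\cup R_-$, so $S$ is properly embedded in the glued manifold while remaining entirely inside the product piece. Outerness then follows directly from the Heegaard diagram (Figures~\ref{fig:OddHD} and~\ref{fig:EvenHD}): the subsurface $F_n\subset\Sigma$ carries no $\alpha$ or $\beta$ curves, so every generator $\mathbf{x}$ misses $P=\overline{N(C)}$, and \cite[Lem.~5.4]{MR2390347} gives $\mathfrak{s}(\mathbf{x})\in O_S$ for all $\mathbf{x}$. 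Your proposed outerness justification (``branched cover of a handlebody, so $c_1$ evaluates trivially on $S$'') does not connect to Juh\'asz's criterion and would need to be replaced in any case.
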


The behavior of the relative Spin$^c$ (Alexander) and homological (Maslov) gradings under the above isomorphisms is explained in detail in Proposition \ref{prop:gradingsodd}.

\begin{proof}
The proof depends upon the observation that, as a sutured manifold,  $\boldSigma(D \times I,T^n)$ can be constructed by gluing two sutured pieces in the sense of Definition \ref{defn:gluing}:

\begin{enumerate}
\item the appropriate double-cover of $S^3 - K$ (trivial if $n$ is even and non-trivial if $n$ is odd), and 
\item  $\boldSigma(D,\vec{p}) \times I$, where $\vec{p} = \{p_1, \ldots, p_n\}$ are $n$ distinct points on a standard, oriented disk $D$.
\end{enumerate}

More precisely, we claim:

\begin{proposition} \label{prop:GlueSutMan}
\[\boldSigma(D \times I, T^n) = Y \,\, \cup_{\gamma_i} \,\, \boldSigma(D \times I,\vec{p} \times I),\] where
\[ Y = \left\{\begin{array}{cc}
                (S^3 - K)_1 \,\amalg\, (S^3-K)_2 & \mbox{if $n$ is even,}\\
                \boldSigma(S^3,K) - \widetilde{K} & \mbox{if $n$ is odd,}
		\end{array}\right.\]

$\gamma_2 = \partial\boldSigma(D ,\vec{p}) \times I,$ and

\[\gamma_1 = \left\{\begin{array}{cc}
                 N(\mu_1) \amalg N(\mu_2') & \mbox{if $n$ is even,}\\
                 N(\pi^{-1}(\mu)) & \mbox{if $n$ is odd}
		\end{array}\right.\]
\end{proposition}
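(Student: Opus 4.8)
The plan is to read the decomposition off of a convenient model for the pair $(D\times I,T^n)$. I would set $\alpha:=K\cap(D\times I)$, a knotted arc of knot type $K$ in the ball $D\times I$ running from $\mathrm{Int}(D_-)$ to $\mathrm{Int}(D_+)$, and recall that $T^n$ is the $n$--cable of $\alpha$, so that $T^n$ lies inside a tubular neighborhood $N(\alpha)$. Let $A:=\overline{\partial N(\alpha)\cap \mathrm{Int}(D\times I)}$ be the lateral annulus of this neighborhood, whose core circle $\mu$ is a meridian of $\alpha$, hence of $K$. Since the branch locus $T^n$ is contained in $N(\alpha)$ and $A$ is disjoint from $T^n$, the branched double cover splits along $\pi^{-1}(A)$, where $\pi$ is the covering projection:
\[
\boldSigma(D\times I,T^n)\;=\;\boldSigma\big(N(\alpha),T^n\big)\;\cup_{\pi^{-1}(A)}\;\widetilde{\big(D\times I-N(\alpha)\big)},
\]
the tilde denoting the unbranched double cover induced by the monodromy of $\pi$. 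The remaining task is to identify the two pieces and the gluing region.

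For the first piece: inside its own tubular neighborhood a cable is a trivial family of $n$ parallel arcs, so $(N(\alpha),T^n)\cong(D\times I,\vec p\times I)$, and hence $\boldSigma(N(\alpha),T^n)\cong\boldSigma(D\times I,\vec p\times I)=\boldSigma(D,\vec p)\times I$, where $\boldSigma(D,\vec p)$ is the double branched cover of the disk over $n$ points, a surface with boundary ($b=2$, $g=\tfrac{n-2}{2}$ for $n$ even; $b=1$, $g=\tfrac{n-1}{2}$ for $n$ odd, by Riemann--Hurwitz). In this model $\pi^{-1}(A)$ is precisely the lateral suture region $\partial\boldSigma(D,\vec p)\times I=\gamma_2$, which has two components for $n$ even and one for $n$ odd, consistent with $\mathrm{lk}(\mu,T^n)=n$ (so $\pi^{-1}(\mu)$ is disconnected exactly when $n$ is even).

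For the second piece: the ball $N(p)=S^3-(D\times I)$ contains only a $\partial$--parallel subarc of $K$, so adjoining its exterior to $D\times I-N(\alpha)$ only glues on a collar; thus $D\times I-N(\alpha)\cong S^3-N(K)$, and under this identification $\mu$ is a meridian of $K$ while $\partial D\times\{\tfrac32\}$ and the core of $A$ become the two meridional sutures of the sutured manifold $S^3-K$ of Example~\ref{example:knot}. The monodromy of $\pi$ restricted to $D\times I-N(\alpha)$ is the $\Z_2$--cover sending a meridian of $K$ to $n\bmod 2$: for $n$ even this is the trivial cover $(S^3-K)_1\amalg(S^3-K)_2$, and for $n$ odd it is the nontrivial one, which is $\boldSigma(S^3,K)-\widetilde K$ because $\boldSigma(S^3,K)$ is obtained from exactly this cover of $S^3-N(K)$ by regluing $N(\widetilde K)$; correspondingly $\pi^{-1}(A)$ is then $N(\pi^{-1}(\mu))$, a meridional annulus of $\widetilde K$. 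This yields $\gamma_1$ as stated ($N(\mu_1)\amalg N(\mu_2')$ for $n$ even, the prime recording the orientation reversal inherent in the gluing; $N(\pi^{-1}(\mu))$ for $n$ odd). To finish, I would check via Definition~\ref{defn:gluing} that the sutures of $Y\cup_{\gamma_i}\boldSigma(D\times I,\vec p\times I)$, namely $(\Gamma_Y-\gamma_1)\cup\emptyset$, coincide with $\pi^{-1}(\partial D\times\{\tfrac32\})=s(\widetilde\Gamma)$; balancedness of the two pieces (Propositions~\ref{prop:BalancedGluing} and~\ref{prop:BalancedCovers}) makes the gluing legitimate, and for $n$ even one applies Definition~\ref{defn:gluing} once for each of the two annular components.

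The step I expect to be the main obstacle is the identification of the second piece for $n$ odd: verifying that the induced $\Z_2$--cover of $S^3-N(K)$ is precisely $\boldSigma(S^3,K)-\widetilde K$ \emph{with its meridional sutures}, and carrying the orientations of the gluing annuli through $\pi^{-1}(A)$ consistently, so that the two pieces really assemble to $\boldSigma(D\times I,T^n)$ on the nose and not merely to something homeomorphic to it. The remaining ingredients --- the homeomorphism $(N(\alpha),T^n)\cong(D\times I,\vec p\times I)$, the collar-absorption identification $D\times I-N(\alpha)\cong S^3-N(K)$, and the linking-number computation of the monodromy --- should be routine.
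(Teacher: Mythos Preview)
Your argument is correct and follows essentially the same route as the paper: decompose $(D\times I,T^n)$ along the lateral annulus $A$ of a tubular neighborhood of the core arc into $(S^3-K)\cup_A(D\times I,\vec p\times I)$, then lift this splitting to the branched double cover and identify the unbranched piece by the parity of $n$ via connectivity of $\pi^{-1}(\mu)=\partial\boldSigma(D,\vec p)$. The paper carries out exactly this decomposition (visualized by an ambient isotopy stretching $N(p)$ to absorb most of $N(K)$, which is your ``collar-absorption'' step), though it is terser about the suture and orientation bookkeeping that you spell out.
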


Here, $\widetilde{K}$ denotes the preimage of $K$ in $\boldSigma(S^3,K)$, $\mu_i$ (resp., $\mu_i'$) for $i=1,2$ represents the meridian (resp., oppositely-oriented meridian) for $(S^3 - K)_i$, and $\pi$ denotes the branched covering projection $\pi: \boldSigma(S^3,K) \rightarrow S^3$.   In particular, $\boldSigma(S^3,K) - \widetilde{K}$ is the nontrivial double-cover of $S^3 - K$.  See Examples \ref{example:knot} and \ref{example:prodsurface} to understand how to identify the above as sutured manifolds.

\begin{figure}
\begin{center}
\input{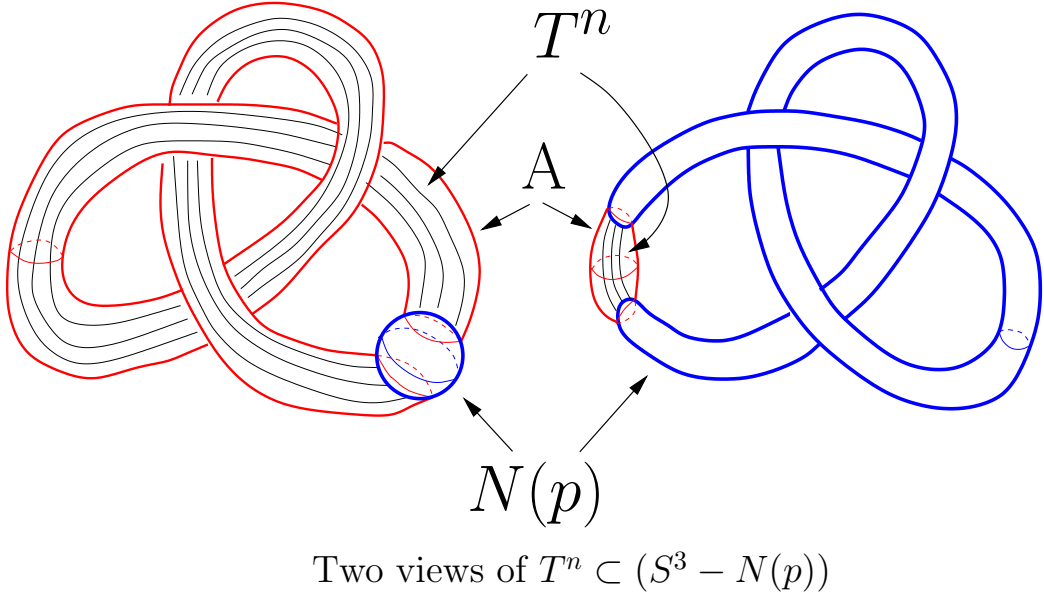}
\end{center}
\caption{Two views of the $n$-cable of $K- N(p)$ for $n=3$, where $K \subset S^3$ and $p$ is a point on $K$.  In the picture on the left, $N(p)$ appears as a small ball, whereas in the picture on the right, $N(p)$ has been stretched to include almost all of $N(K)$. Note that the pair $(S^3-N(p), T^n)$ can be obtained by gluing $(S^3 - N(K), \emptyset)$ to $(D \times I, \{p_1, \ldots p_n\} \times I)$ along the red annulus $A$.}
\label{fig:AmbIsotopy}
\end{figure}

\begin{proof}
By an ambient isotopy in $S^3$, we can view the pair $(D \times I,T^n)$ as in Figure \ref{fig:AmbIsotopy}.  By splitting along the imbedded annulus, $A$ (pictured), we see that 
\[(D \times I,T^n) = (S^3 - K) \,\, \cup_{A} \,\, (D \times I,\vec{p} \times I).\]
To understand $\boldSigma(D \times I,T^n)$ as a sutured manifold, we lift this decomposition to the double-branched cover.  Note that the branch set is contained in the $(D \times I)$ piece downstairs.  Therefore, $\boldSigma(D \times I, T^n)$ is the union of two pieces:

\begin{enumerate}
\item $\boldSigma(D \times I, \vec{p} \times I)$,
\item one of the two double-covers of $S^3 - K$.
\end{enumerate}

Let $Y$ denote the appropriate double-cover of $S^3-K$ and \[\pi: Y \rightarrow S^3-K\] the covering map.  Note that the $2$--fold covers of $S^3-K$ are distinguished by whether or not $\pi^{-1}(\mu)$ is connected, where $\mu$, as usual, denotes the image of the oriented meridian of $K$ in $\partial(S^3 - N(K))$.  But \[\pi^{-1}(\mu) = \partial\boldSigma(D_{\frac{3}{2}},\vec{p}).\] Therefore, $\pi^{-1}(\mu)$ is connected if $n$ is odd and disconnected if $n$ is even; hence, $Y$ is the nontrivial double-cover if $n$ is odd, and the trivial double-cover if $n$ is even.

The result follows.
\end{proof}

We will now address the even and odd cases separately.

{\flushleft \bf Odd Case:}
Recall \cite[Prop. 9.2]{MR2253454} that if $K \subset Y$ is an oriented knot in a closed, connected, oriented $3$--manifold $Y$, then 
\[SFH(Y-K) \cong \widehat{HFK}(Y,K).\]
This is proved by comparing a sutured Heegaard diagram $\Sigma$ for $Y-K$ satisfying \[\partial \Sigma = \mu \cup \mu',\] where $\mu$ represents the meridian of $K$ and $\mu'$ represents the meridian with the opposite orientation, to the doubly-pointed Heegaard diagram $\widehat{\Sigma}$ obtained by capping off the $\mu$ boundary component with a disk containing a basepoint labeled $z$ and the $\mu'$ boundary component with a disk containing a basepoint labeled $w$.  One easily checks that the associated chain complexes are isomorphic.

Let $(\Sigma, \boldalpha,\boldbeta)_F$ be the standard balanced sutured Heegaard diagram for $\boldSigma(D \times I, \vec{p} \times I)$:

\begin{enumerate}
\item $\Sigma_F = \boldSigma(D,\vec{p})$, and 
\item ${\boldalpha}_F = {\boldbeta}_F = \emptyset$.  
\end{enumerate}

When $n=2k+1$ for $k \in \mathbb{Z}_{\geq 0}$, $\Sigma_F = \boldSigma(D,\vec{p})$ is the surface $F_{k,1}$ with boundary $\gamma$.  If $(\Sigma, \boldalpha, \boldbeta)_K$ is any balanced sutured Heegaard diagram for $\boldSigma(S^3-K)$ as above, then Proposition \ref{prop:GlueSutMan} and Lemma \ref{lemma:GlueHD} together imply that 
\begin{equation} \label{eqn:GlueHDodd}
(\Sigma,\boldalpha,\boldbeta)_n := (F_{k,1} \cup_{\mu \sim -\gamma} \Sigma_K, \boldalpha_K,\boldbeta_K)
\end{equation}
is a balanced, sutured Heegaard diagram for $\boldSigma(D \times I,T^n)$.  See Figure \ref{fig:OddHD}.

\begin{figure}
\begin{center}
\input{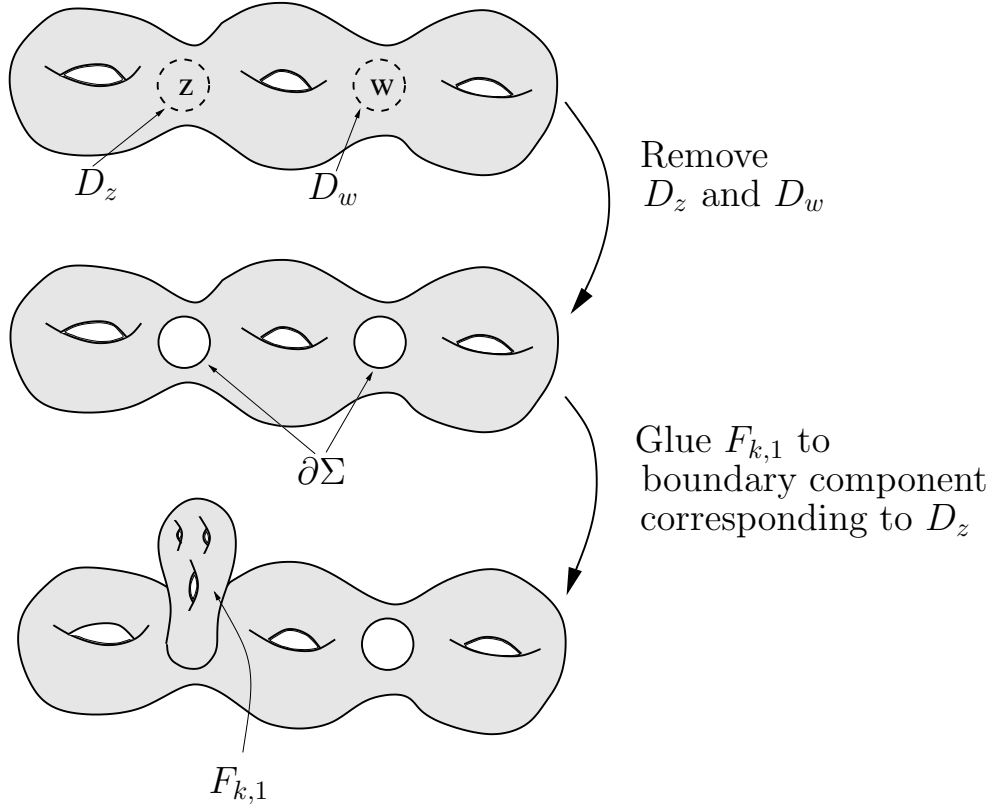}
\end{center}
\caption{Obtaining a balanced sutured Heegaard diagram $(\Sigma, \boldalpha,\boldbeta)_n$ for $\boldSigma(D \times I,T^n)$, $n=2k+1$.  The figure in the top row represents a doubly-pointed Heegaard diagram for $(\boldSigma(S^3,K),\widetilde{K})$.  One obtains a sutured Heegaard diagram for $\boldSigma(S^3-K)$ (middle row) by removing small disks around $w$ and $z$ and a sutured Heegaard diagram for $\boldSigma(D\times I, T^n)$ by gluing $F_{k,1}$ along the boundary component corresponding to $D_z$.}
\label{fig:OddHD}
\end{figure}

When $k=0$, $\Sigma_F = F_{0,1}$ is just a disk.  The sutured Floer chain complex associated to $(\Sigma,\boldalpha, \boldbeta)_1$ is then easily seen to be isomorphic to the Heegaard Floer chain complex which computes $\widehat{HF}(\boldSigma(S^3,K)),$  since $\widehat{HF}(\boldSigma(S^3,K))$ is computed using $\widehat{\Sigma}$, the Heegaard diagram obtained from $\Sigma$ by gluing a disk containing a basepoint, $w$, to the lone boundary component of $\Sigma$ and counting holomorphic disks in the differential that miss $w$.  Hence,
\[SFH(\boldSigma(D \times I,T^1)) \cong \widehat{HF}(\boldSigma(S^3,K)).\]

Now suppose $k\geq 1$.  Let $Y$ denote the sutured manifold $\boldSigma(D \times I,T^n)$.  Since $k$ is the genus of $F_{k,1} = (\boldSigma(D,\vec{p}))$, there exists some oriented simple closed curve $C \subset \Sigma_F = F_{k,1}$ for which $[C] \neq 0$ in $H_1(F_{k,1};\Z)$ and, hence, $\neq 0$ in $H_1(R(\Gamma),\Z)$.  Recall (see Definition \ref{defn:suturedman}) that $R(\Gamma)$ is the complement of the suture neighborhoods in $\partial Y$.

Define $S = C \times I$, endowed with the orientation compatible with $\partial S = (-C \times \{-1\}) \cup (C \times \{4\})$.  $S$ is now a {\em decomposing surface} for the sutured manifold $Y = \boldSigma(D \times I,T^n)$ which satisfies the conditions of \cite[Thm. 1.3]{MR2390347}.  We claim:

\begin{lemma} \label{lemma:CutalongS}
Let $(Y,\Gamma)$ and $S$ be as above, and $(Y',\Gamma')$ the sutured manifold obtained by decomposing along $S$.  Then \[SFH(Y',\Gamma') \cong SFH(Y,\Gamma)\]
\end{lemma}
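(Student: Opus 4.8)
The plan is to derive the isomorphism from Juh\'asz's surface decomposition theorem \cite[Thm. 1.3]{MR2390347}. The first step is to verify that $S = C\times I$ is a decomposing surface for $(Y,\Gamma) = \boldSigma(D\times I,T^n)$ of the kind to which that theorem applies. Since $C$ was chosen in the interior of $\Sigma_F = F_{k,1}$, the surface $S$ is disjoint from the suture annulus of $Y$ — which, by Proposition~\ref{prop:GlueSutMan}, is contained in the $\boldSigma(S^3,K) - \widetilde{K}$ piece — so the three conditions in the definition of a decomposing surface are satisfied vacuously. Moreover, from the gluing picture of Proposition~\ref{prop:GlueSutMan} one sees $R_+(\Gamma)\cong F_{k,1}\cong R_-(\Gamma)$, with $S\cap R_+(\Gamma) = C\times\{4\}$ and $S\cap R_-(\Gamma) = C\times\{-1\}$ each a single non-separating simple closed curve; hence $S$ is well-groomed in the sense of \cite[Defn. 4.3]{MR2390347}. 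Applying \cite[Thm. 1.3]{MR2390347} gives $SFH(Y',\Gamma')\cong\bigoplus_{\mathfrak{s}\in O_S}SFH(Y,\Gamma,\mathfrak{s})$, where $O_S\subset\mathrm{Spin}^c(Y,\Gamma)$ is the set of Spin$^c$ structures outer with respect to $S$.

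The second step, which is the heart of the matter, is to show that the direct sum on the right is all of $SFH(Y,\Gamma)$, i.e. that every $\mathfrak{s}$ with $SFH(Y,\Gamma,\mathfrak{s})\neq 0$ is outer. This uses crucially that $S$ is an \emph{annulus}. On one hand, the characterization of $O_S$ for well-groomed surfaces (\cite[Lem. 4.5]{MR2390347}, together with the $c_1$-evaluation of \cite[Lem. 5.4]{MR2390347}) combined with $\chi(S)=0$ and $S\cap s(\Gamma)=\emptyset$ identifies $O_S$ with $\{\mathfrak{s}\ :\ \langle c_1(\mathfrak{s}),[S]\rangle = 0\}$. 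On the other hand, although $[S]\neq 0$ in $H_2(Y,\partial Y;\Z)$ (because $C$ is non-separating), running the decomposition theorem also along $-S$ — again a well-groomed decomposing surface, since $C$ is non-separating in both $R_+(\Gamma)$ and $R_-(\Gamma)$ — and comparing, or equivalently invoking the adjunction-type inequality $|\langle c_1(\mathfrak{s}),[S]\rangle|\leq -\chi(S) = 0$ valid whenever $SFH(Y,\Gamma,\mathfrak{s})\neq 0$, forces $\langle c_1(\mathfrak{s}),[S]\rangle = 0$ on the entire support of $SFH(Y,\Gamma)$. Thus that support lies in $O_S$, and $SFH(Y',\Gamma')\cong SFH(Y,\Gamma)$.

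I expect the second step to be the main obstacle: \cite[Thm. 1.3]{MR2390347} by itself only yields a direct summand, and the substance of the lemma is that no summand is lost — which rests on the annulus condition $\chi(S)=0$ making the set of outer Spin$^c$ structures as large as possible. Verifying the hypotheses of the decomposition theorem and the homological bookkeeping for $[S]$ are, by contrast, routine.
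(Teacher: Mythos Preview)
Your argument is correct, but the paper proceeds more directly. Both proofs begin by invoking Juh\'asz's decomposition theorem to write $SFH(Y',\Gamma') \cong \bigoplus_{\mathfrak{s}\in O_S} SFH(Y,\Gamma,\mathfrak{s})$, and the substance is indeed showing that every Spin$^c$ structure in the support of $SFH(Y,\Gamma)$ is outer. Where you argue homologically --- identifying $O_S$ via $\langle c_1(\mathfrak{s}),[S]\rangle=0$ and then invoking an adjunction-type bound (equivalently, the symmetry under $S\mapsto -S$) to force this on the support --- the paper instead exploits the explicit Heegaard diagram $(\Sigma,\boldalpha,\boldbeta)_n$ of Equation~\eqref{eqn:GlueHDodd}. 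Since $C\subset F_{k,1}\subset\Sigma$ lies in the product piece, where $\boldalpha_F=\boldbeta_F=\emptyset$, the annular region $P=\overline{N(C)}$ contains no $\alpha$ or $\beta$ curves at all; hence $(\Sigma,\boldalpha,\boldbeta,P)$ is a surface diagram adapted to $S$, and by \cite[Lem.~5.4]{MR2390347} every generator ${\bf x}\in\mathbb{T}_\alpha\cap\mathbb{T}_\beta$ trivially satisfies ${\bf x}\cap P=\emptyset$, so $\mathfrak{s}({\bf x})\in O_S$. Your route is more portable --- it applies to any product annulus over a curve non-separating in $R_\pm$, independent of the Heegaard diagram --- while the paper's is shorter and sidesteps any $c_1$ computation, at the cost of relying on the very particular diagram already in hand.
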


\begin{proof}
An application of \cite[Thm. 1.3]{MR2390347} tells us that if $(Y',\Gamma')$ is the sutured $3$--manifold obtained from $(Y,\Gamma)$ by decomposing along $S$, then
\[SFH(Y',\Gamma') = \bigoplus_{\mathfrak{s} \in O_S} SFH(Y,\Gamma,\mathfrak{s})\,\]
where $O_S$ is the set of Spin$^c$ structures in Spin$^c(Y,\partial Y)$ which are {\em outer} with respect to $S$, in the sense of \cite[Defn. 1.1]{MR2390347}.  

Now, define $P$ to be the closed annulus $\overline{N(C)} \subset \Sigma$, oriented compatibly with $\Sigma$, with $\partial P = A \cup B$, where $A$ and $B$ are each connected.  $(\Sigma, \boldalpha, \boldbeta,P)$ is then a sutured Heegaard diagram for $(Y,\Gamma)$ adapted to $S' =  P \cup (A \times [-1,\frac{3}{2}]) \cup (B \times [\frac{3}{2},4])$ in the sense of \cite[Defn. 4.3]{MR2390347}.  Furthermore, $S'$ is equivalent to $S$ in the sense of \cite[Defn. 4.1]{MR2390347}.  

\cite[Lem. 5.4]{MR2390347} then tells us that $\mathfrak{s}({\bf x}) \in O_S$ iff ${\bf x} \cap P = \emptyset$.  However, there are no $\alpha$ or $\beta$ curves in the region $P$, so $\mathfrak{s}({\bf x}) \in O_S$ for all ${\bf x} \in \mathbb{T}_\alpha \cap \mathbb{T}_\beta$.

The result follows.
\end{proof}

It now remains to observe (using \cite[Defn. 4.3]{MR2390347}) that the Heegaard diagram $(\Sigma',\boldalpha',\boldbeta')$ representing the sutured manifold $(Y',\Gamma')$ obtained by decomposing along $S'$ is simply 
\[(\Sigma', \boldalpha', \boldbeta') = (\Sigma - \mbox{Int}(P),\boldalpha,\boldbeta).\]
I.e., $\Sigma'$ is precisely the Heegaard diagram obtained from $\Sigma$ by cutting out the interior of a neighborhood of $C$ (which introduces two new boundary components to $\Sigma'$).  Let $F'$ denote $p^{-1}(F_{k,1} - P)$ (where $p$ is the local diffeomorphism $p:\Sigma' \rightarrow \Sigma$ described in \cite[Defn. 4.3]{MR2390347}) and choose a point $a \in F'$.  Then if $\phi$ represents a differential in the sutured Floer complex for $(Y',\Gamma')$, $n_a(\phi) = 0$, since $F'$ is adjacent to the boundary.

Now note that, by using a Heegaard diagram obtained from $(\Sigma',\boldalpha',\boldbeta')$ by replacing $F' \subset \Sigma'$ with a disk $D_z$ containing a basepoint, $z$, and only counting holomorphic disks $\phi$ in the differential for which $n_z(\phi) = 0$, one obtains a chain complex which computes $\widehat{HFK}(\boldSigma(S^3,K),\widetilde{K})$ .

Therefore, up to a shift in homological gradings between generators in different relative Spin$^c$ structures, \[CFH(Y',\Gamma') \cong \widehat{CFK}(\boldSigma(S^3,K), \widetilde{K})\] as chain complexes. Hence, (again, up to a grading shift), \[SFH(\boldSigma(D \times I,T^n)) \cong \widehat{HFK}(\boldSigma(S^3,K),\widetilde{K})\] as desired.

{\flushleft {\bf Even Case:}}

Again, denote by $(\Sigma, \boldalpha,\boldbeta)_F$ the standard sutured Heegaard diagram for $\boldSigma(D \times I, \vec{p} \times I)$:

\begin{enumerate}
\item $\Sigma = \boldSigma(D,\vec{p})$, and 
\item $\boldalpha = \boldbeta = \emptyset$.  
\end{enumerate}

When $n=2k$ for $k \in \mathbb{Z}_{\geq 1}$, $\Sigma_F = \boldSigma(D,\vec{p})$ is the surface $F_{k-1,2}$ with boundary components $\gamma_1, \gamma_2$.  Let \[(\Sigma, \boldalpha, \boldbeta)_{K_1} \amalg (\Sigma, \boldalpha, \boldbeta)_{K_2}\] be a balanced sutured Heegaard diagram for $(S^3-K)_1 \amalg (S^3 - K)_2$.  Then, again, Proposition \ref{prop:GlueSutMan} and Lemma \ref{lemma:GlueHD} together imply that \[(\Sigma,\boldalpha,\boldbeta) := ((F_{k-1,2}) \cup_{(\mu_1 \sim -\gamma_1),(\mu_2' \sim -\gamma_2)} (\Sigma_{K_1} \amalg \Sigma_{K_2}), \boldalpha,\boldbeta)\] is a balanced sutured Heegaard diagram for $\boldSigma(D \times I,T^n)$.  See Figure \ref{fig:EvenHD}.

\begin{figure}
\begin{center}
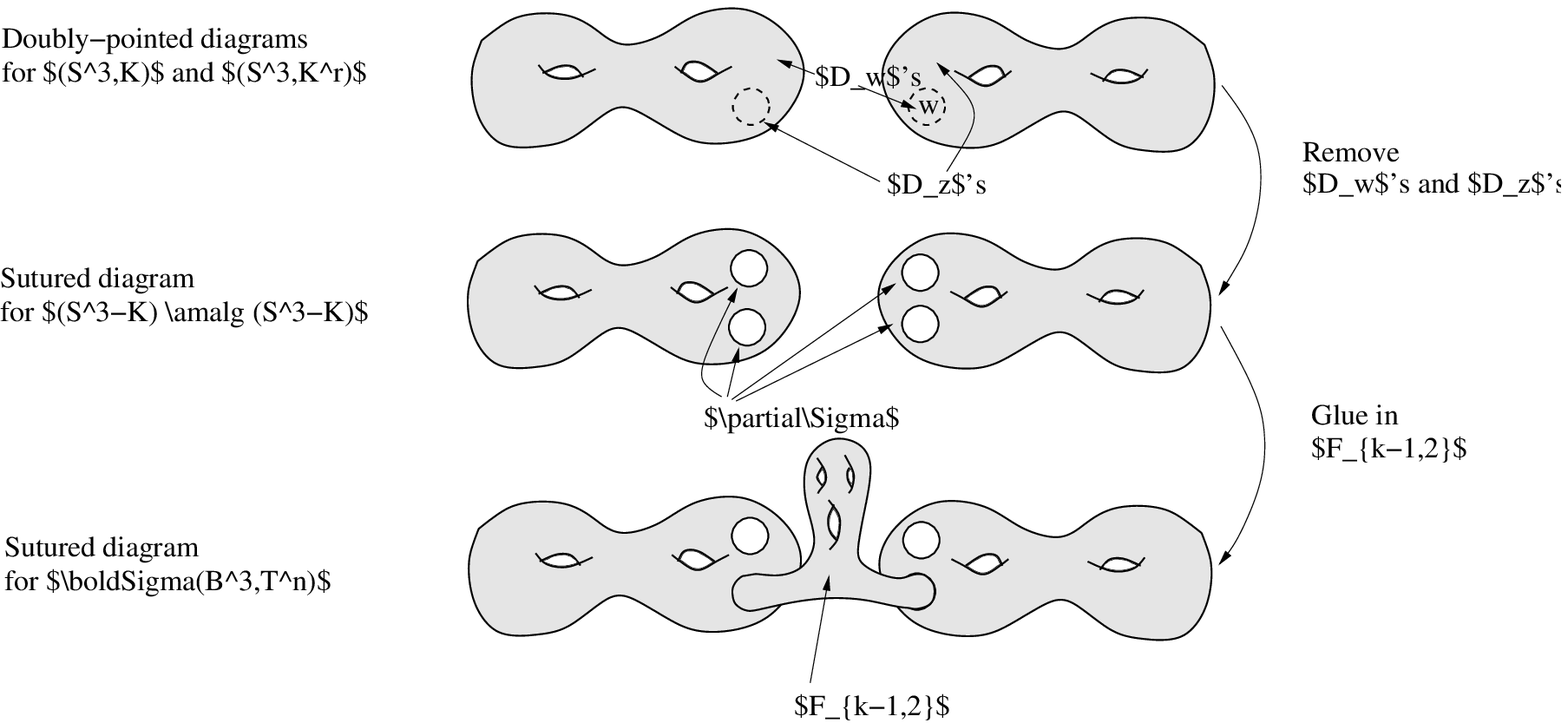
\end{center}
\caption{Obtaining a balanced sutured Heegaard diagram $(\Sigma, \boldalpha,\boldbeta)_n$ for $\boldSigma(D \times I,T^n)$, $n=2k$.  Begin with two doubly-pointed Heegaard diagrams for $(S^3,K)$ (top row), and remove neighborhoods of the $w$ and $z$ basepoints to obtain a sutured Heegaard diagram representing $(S^3 - K) \amalg (S^3 - K^r)$ (middle row).  Then glue in $F_{k-1,2}$ as shown to obtain a sutured Heegaard diagram for $\boldSigma(D \times I,T^n)$ (bottom row).}
\label{fig:EvenHD}
\end{figure}

As in the odd case, choose an oriented simple closed curve $C \subset \Sigma_F = F_{k-1,2}$ for which $[C] \neq 0 \in H_1(F_{k-1,2};\Z)$.  Then, once again, we can decompose along the surface $S = C \times I$.  An argument completely analogous to the one in the odd case then implies that, up to a homological grading shift, \[SFH(\boldSigma(D \times I, T^n)) \cong \widehat{HFK}(S^3,K \# K^r),\] where $K^r$ is the orientation reverse of $K$.
\end{proof}

\subsection{Gradings}\label{sec:Gradings}
We wish to relate the natural gradings arising in sutured Floer homology with the gradings in Heegaard knot Floer homology, and compare both with the Khovanov bigrading appearing in the categorification of the reduced colored Jones polynomial.

We begin by defining relative $\Z$ homological (Maslov) and filtration (Alexander) gradings in the sutured Floer chain complex for the sutured manifolds $\boldSigma(D \times I, T^n)$ discussed in the previous subsection.

\begin{definition} (Maslov grading) \label{defn:SFHMaslovGrading}
Let $(\Sigma, \boldalpha, \boldbeta)$ be a Heegaard diagram representing the sutured manifold $\boldSigma(D \times I,T^n)$,  and ${\bf x}, {\bf y} \in \mathbb{T}_\alpha \cap \mathbb{T}_\beta$ with $\pi_2({\bf x},{\bf y})$ non-empty, $\phi \in \pi_2({\bf x},{\bf y})$.  Then
\[{\bf M}_{SF}({\bf x},{\bf y}) := \mu(\phi).\]
\end{definition}

We shall treat the even and odd cases separately when discussing Alexander gradings.

\begin{definition} (Alexander grading, $n$ odd) \label{defn:SFHAlexGradingOdd}
Let $(\Sigma,\boldalpha, \boldbeta)$ be a Heegaard diagram representing $\boldSigma(D \times I,T^n)$ for $n \in \Z_{>0}$ odd, obtained as in the proof of Theorem 4.1, with product region $F_{g,1} \times I$.  Choose a point $p \in F_{g,1} \subset \Sigma$, ${\bf x}, {\bf y} \in \mathbb{T}_\alpha \cap \mathbb{T}_\beta$ satisfying $\pi_2({\bf x},{\bf y}) \neq \emptyset$, and $\phi \in \pi_2({\bf x}, {\bf y})$.  Then
\[{\bf A}_{SF}({\bf x},{\bf y}) := n_p(\phi).\]
\end{definition}

\begin{definition} (Alexander grading, $n$ even) \label{defn:SFHAlexGradingEven}
Let $(\Sigma,\boldalpha, \boldbeta)$ be a Heegaard diagram representing $\boldSigma(D \times I,T^n)$ for $n \in \Z_{>0}$ even, obtained as in the proof of Theorem \ref{thm:KnotFloerRel}.  Then $\Sigma$ has two boundary components, $\mu$ and $\mu'$, where $\mu$ (resp., $\mu'$) is the image in $\Sigma$ of a meridian (resp., oppositely-oriented meridian) of $K \# K^r$.  Consider the Heegaard diagram $\Sigma'$ obtained by gluing a disk $D_z$ to $\mu$.  Choose a point $z \in D_z \subset \Sigma'$, ${\bf x}, {\bf y} \in \mathbb{T}_\alpha \cap \mathbb{T}_\beta$ satisfying $\pi_2({\bf x},{\bf y}) \neq \emptyset$, and $\phi \in \pi_2({\bf x}, {\bf y})$.  Then
\[{\bf A}_{SF}({\bf x},{\bf y}) = n_z(\phi).\]
\end{definition}

\begin{lemma} \label{lemma:Welldefinedgradings} ${\bf M}_{SF}$ and ${\bf A}_{SF}$ are well-defined as relative $\Z$ gradings on $SFH(\boldSigma(D \times I,T^n))$.
\end{lemma}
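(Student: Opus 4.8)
The plan is to show that the quantities $\mathbf{M}_{SF}(\mathbf{x},\mathbf{y})$ and $\mathbf{A}_{SF}(\mathbf{x},\mathbf{y})$ do not depend on the choice of $\phi\in\pi_2(\mathbf{x},\mathbf{y})$, which is the only ambiguity in Definitions \ref{defn:SFHMaslovGrading}--\ref{defn:SFHAlexGradingEven} (additivity under concatenation, and hence the relative grading axioms, then being automatic from the corresponding additivity of $\mu$ and of local multiplicities $n_p$). By Proposition \ref{prop:AffineIdent} (in the special case $n=1$ of an $(n+1)$-gon, i.e. a bigon), two elements of $\pi_2(\mathbf{x},\mathbf{y})$ differ by a periodic domain, i.e. an element of $\mathrm{Ker}(\mathrm{Span}([\boldalpha,\boldbeta])\to H_1(\Sigma;\Z))$, which by Proposition \ref{prop:H1andH2} is identified with $H_2(Y;\Z)$ for $Y=\boldSigma(D\times I,T^n)$. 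So it suffices to check that $\mu$ and the relevant $n_p$ (resp. $n_z$) each vanish on every periodic domain $\cP$ representing a class in $H_2(Y;\Z)$.

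First I would handle $\mathbf{M}_{SF}$. Using Lipshitz's Maslov index formula (Proposition \ref{prop:MasIndex}, valid in the sutured setting by the preceding section), for a periodic domain $\cP$ one has $\mu(\cP)=e(\cP)+2n_{\mathbf{x}}(\cP)$, and the standard computation (as in \cite{MR2113019}) expresses this as $2\langle c_1(\mathfrak{s}(\mathbf{x})),[\cP]\rangle$ under the identification of periodic domains with $H_2(Y;\Z)$. For the manifolds $Y=\boldSigma(D\times I,T^n)$ at hand, the proof of Theorem \ref{thm:KnotFloerRel} shows $Y$ is obtained from $(S^3-K)$-type pieces and a product piece $F_{g,b}\times I$ by gluing; in particular $H_2(Y;\Z)$ is generated by tori of the form $C\times\{pt\}$ (coming from curves $C$ in the product region) together with the torus/surface classes from the knot-complement piece, and $c_1$ of any $\mathrm{Spin}^c$ structure evaluates to $0$ on all of these — this is exactly the observation about $c_1$ vanishing on doubly-periodic domains made in the Admissibility subsection. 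Hence $\mu(\cP)=0$ for all periodic domains, so $\mathbf{M}_{SF}$ is well-defined.

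Next I would handle $\mathbf{A}_{SF}$. Here the point is topological rather than analytic: $n_p(\cP)$ (resp. $n_z(\cP)$) is the local multiplicity of the periodic domain at a point $p$ lying in the product region $F_{g,1}$ (resp. at $z$ in the capping disk $D_z$ glued to the $\mu$-boundary). I would argue that the coefficient of any periodic domain is constant on the product region $F_{g,1}\subset\Sigma$ — indeed there are no $\boldalpha$ or $\boldbeta$ curves meeting the relevant piece $F'=p^{-1}(F_{k,1}-P)$ (this is precisely the fact exploited in Lemma \ref{lemma:CutalongS} and the discussion following it), so the multiplicity cannot jump there — and that this constant coefficient is forced to be $0$ because $\partial\Sigma$ is adjacent to that region and periodic domains, having boundary a $\Z$-linear combination of the $\eta$-curves (which lie in $\mathrm{Int}(\Sigma)$), necessarily have multiplicity $0$ in the boundary-adjacent elementary domains. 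The same reasoning applies in the even case with $z\in D_z$: after capping the $\mu$-boundary with $D_z$, a periodic domain for the original (boundary) diagram still has no boundary on $\partial D_z$ and vanishes near the old boundary component, so $n_z(\cP)=0$. Finally one notes $\pi_2(\mathbf{x},\mathbf{x})$ is non-empty (the trivial class) and the two gradings are additive under splicing of disks, so they descend to honest relative $\Z$-gradings.

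\textbf{The main obstacle} I anticipate is not any single hard estimate but correctly bookkeeping the identifications: matching the affine space $\pi_2(\mathbf{x},\mathbf{y})$ to $H_2(Y;\Z)$ via Propositions \ref{prop:AffineIdent} and \ref{prop:H1andH2} (and dealing with the stabilization to achieve $d>2$), and — more delicately — verifying that $c_1$ really evaluates trivially on \emph{all} of $H_2(\boldSigma(D\times I,T^n);\Z)$ for these specific sutured manifolds, rather than just on the product tori. The cleanest route is to invoke the structural description from the proof of Theorem \ref{thm:KnotFloerRel} (gluing a knot-complement piece to $F_{g,b}\times I$, together with the $\#k(S^1\times S^2)$ summands that appear there) and the fact that $\widehat{HF}$ of such manifolds is supported in a single $\mathrm{Spin}^c$ structure with $c_1=0$ on periodic domains, so that $\mu$ is automatically constant on each $\pi_2(\mathbf{x},\mathbf{y})$; for $\mathbf{A}_{SF}$, the obstacle is milder and reduces to the elementary observation that a $2$-chain whose boundary avoids a neighborhood of $\partial\Sigma$ has vanishing multiplicity there.
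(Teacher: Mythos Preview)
Your approach has a genuine gap. You try to show directly that $\mu(\cP)$ and $n_p(\cP)$ vanish for every periodic domain $\cP$, but the specific arguments you give for this do not hold. For $\mathbf{A}_{SF}$ in the odd case, you claim that the product region $F_{k,1}$ is adjacent to $\partial\Sigma_n$; it is not. In the construction of Equation~\eqref{eqn:GlueHDodd}, $F_{k,1}$ is glued to $\Sigma_K$ along the $\mu$-boundary, leaving only the $\mu'$-boundary of $\Sigma_K$ as $\partial\Sigma_n$. The elementary domain $\mathcal{D}_F$ containing $p$ is therefore an interior domain (compare Figure~\ref{fig:ReplaceOdd}), and nothing forces a periodic domain to have multiplicity zero there. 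Similarly, your argument for $\mathbf{M}_{SF}$ invokes the $c_1$-vanishing observation from the Admissibility remark, but that remark concerns the auxiliary manifolds $Y_{\eta^{i},\eta^{j}}\cong (F\times I)\#j(S^1\times S^2)$ appearing in the link surgeries multi-diagram, not $\boldSigma(D\times I,T^n)$ itself; and your description of the generators of $H_2(\boldSigma(D\times I,T^n);\Z)$ is incorrect.

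The paper's argument is much simpler and sidesteps all of this: one shows by Mayer--Vietoris (using the decomposition of Proposition~\ref{prop:GlueSutMan} and the fact that $\boldSigma(S^3,K)$ and $S^3$ are rational homology spheres) that $H_2(\boldSigma(D\times I,T^n);\Z)=0$. Then Proposition~\ref{prop:AffineIdent} gives $|\pi_2(\mathbf{x},\mathbf{y})|\le 1$, so there is no choice of $\phi$ at all and the gradings are tautologically well-defined. Your identified ``main obstacle'' dissolves once you observe that $H_2$ vanishes.
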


\begin{proof} 
The result will follow from the fact that there is a unique $\phi$ in $\pi_2({\bf x},{\bf y})$ whenever $\pi_2({\bf x},{\bf y}) \neq \emptyset.$  This, in turn, follows from Proposition \ref{prop:AffineIdent} combined with the fact that $H_2(\boldSigma(D \times I,T^n);\Z) = 0$.




To see that $H_2(\boldSigma(D \times I,T^n);\Z) = 0$ for all $n \in \Z_{>0}$, we will use the decomposition of $\boldSigma(D \times I,T^n)$ discussed in Proposition \ref{prop:GlueSutMan}.

First, note that $\boldSigma(S^3,K)$ and $S^3$ are both rational homology spheres, which implies (using Mayer-Vietoris) that $H_2(\boldSigma(S^3,K) - \widetilde{K};\Z)$ and $H_2(S^3 - (K \# K^r);\Z)$ are both $\cong 0$.

Another application of Mayer-Vietoris, using the decomposition 
\[\boldSigma(D \times I,T^n) = Y \cup \boldSigma(D \times I, \vec{p} \times I)\] where\[Y = \left\{ \begin{array}{cc}
                        \boldSigma(S^3,K) - \widetilde{K} & \mbox{when $n$ is odd},\\
			S^3 - (K \# K^r) & \mbox{when $n$ is even}
		\end{array}\right.\] 
tells us that $H_2(\boldSigma(D \times I,T^n);\Z) = 0$, as desired.  Since $\pi_2({\bf x},{\bf y})$ is an affine set for the action of $H_2(\boldSigma(D \times I,T^n);\Z)$ when non-empty, if $\exists \,\,\phi \in \pi_2({\bf x},{\bf y})$, it is unique.  Hence ${\bf M}_{SF}$ and ${\bf A}_{SF}$ are well-defined.
\end{proof}

\begin{definition}
Let $(\Sigma, \boldalpha, \boldbeta,w,z)$ be a doubly-pointed Heegaard diagram representing a nullhomologous knot $K$ in a rational homology sphere $Y$, and suppose that ${\bf x}, {\bf y} \in \mathbb{T}_\alpha \cap \mathbb{T}_\beta$ with $\pi_2({\bf x},{\bf y})$ non-empty and $\phi \in \pi_2({\bf x},{\bf y})$.  Then

\begin{eqnarray*}
{\bf M}_{HF}({\bf x},{\bf y}) &:=& \mu(\phi) - 2n_w(\phi)\\
{\bf A}_{HF}({\bf x},{\bf y}) &:=& n_z(\phi) - n_w(\phi).
\end{eqnarray*}
\end{definition}


The following two propositions explain the correspondence between relative ${\bf M}$ and ${\bf A}$ gradings on the two sides of the isomorphisms stated in Theorem \ref{thm:KnotFloerRel}.

\begin{proposition} \label{prop:gradingsodd}
Implicit in each isomorphism in Theorem \ref{thm:KnotFloerRel} is a set bijection
\[(\mathbb{T}_{\alpha} \cap \mathbb{T}_{\beta})_{SF} \leftrightarrow (\mathbb{T}_{\alpha} \cap \mathbb{T}_{\beta})_{HF},\] 
where $(\Sigma,\boldalpha,\boldbeta)_{HF}$ is a particular doubly-pointed Heegaard diagram computing the Heegaard Floer homology on the right-hand side of the isomorphism, and $(\Sigma, \boldalpha,\boldbeta)_{SF}$ is the sutured Heegaard diagram obtained from $\Sigma_{HF}$ by the procedure described in the proof of Theorem \ref{thm:KnotFloerRel}.  Under this correspondence, if 
\[{\bf x}, {\bf y} \in (\mathbb{T}_{\alpha} \cap \mathbb{T}_{\beta})_{SF} \cong (\mathbb{T}_{\alpha} \cap \mathbb{T}_{\beta})_{HF}\]
and $\pi_2({\bf x},{\bf y}) \neq \emptyset$, then 

\begin{eqnarray*}
{\bf M}_{SF}({\bf x},{\bf y}) &=& {\bf M}_{HF}({\bf x},{\bf y}) - (n-1){\bf A}_{HF}({\bf x},{\bf y})\\
{\bf A}_{SF}({\bf x},{\bf y}) &=& {\bf A}_{HF}({\bf x},{\bf y})
\end{eqnarray*}

for $n \in \mathbb{Z}_{>0}$ odd and 

\begin{eqnarray*}
{\bf M}_{SF}({\bf x}_1 \otimes {\bf x}_2, {\bf y}_1 \otimes {\bf y}_2) &=& {\bf M}_{HF}({\bf x_1},{\bf y}_1) + {\bf M}_{HF}({\bf x}_2,{\bf y}_2) - (n-2){\bf A}_{HF}({\bf x}_1,{\bf y}_1)\\
{\bf A}_{SF}({\bf x}_1 \otimes {\bf x}_2,{\bf y}_1 \otimes {\bf y}_2) &=& {\bf A}_{HF}({\bf x}_1,{\bf y}_1) + {\bf A}_{HF}({\bf x}_2, {\bf y}_2)\\
\end{eqnarray*}

for $n \in \mathbb{Z}_{>0}$ even.

In the odd case above, ${\bf x},{\bf y}$ refer to generators of 
\[\widehat{CFK}(\boldSigma(S^3,K),\widetilde{K}) \leftrightarrow CFH(\boldSigma(D \times I,T^n)),\] while in the even case, ${\bf x}_i, {\bf y}_i$ refer to generators of $\widehat{CFK}(S^3,K)$, and, hence, ${\bf x}_i \otimes {\bf y}_i$ refer to generators of 
\[\widehat{CFK}(S^3,K \# K^r) \leftrightarrow CFH(\boldSigma(D \times I,T^n)).\]
\end{proposition}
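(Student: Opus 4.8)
The plan is to track the gradings through each geometric step used in the proof of Theorem \ref{thm:KnotFloerRel}, using the Maslov index formula of Proposition \ref{prop:MasIndex} as the main computational tool. I would begin by making the set bijection $(\mathbb{T}_\alpha\cap\mathbb{T}_\beta)_{SF}\leftrightarrow(\mathbb{T}_\alpha\cap\mathbb{T}_\beta)_{HF}$ explicit: in the odd case, the sutured Heegaard diagram $(\Sigma,\boldalpha,\boldbeta)_n$ is obtained from a doubly-pointed diagram $(\widehat\Sigma_K,\boldalpha_K,\boldbeta_K,w,z)$ for $(\boldSigma(S^3,K),\widetilde K)$ by deleting disks around $w$ and $z$, gluing on $F_{k,1}$, and then (in the surface-decomposition step of Theorem \ref{thm:KnotFloerRel}) cutting along $S = C\times I$ and reinserting a disk with basepoint around $F'$. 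Since there are no $\alpha$ or $\beta$ curves in the glued-on or cut-out regions, the intersection points are literally unchanged at each stage, so the bijection is the identity on underlying points. The even case is analogous, with two doubly-pointed diagrams and the identification of generators of $\widehat{CFK}(S^3,K\#K^r)$ with pairs ${\bf x}_1\otimes{\bf x}_2$.

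Next I would compute how a disk class $\phi\in\pi_2({\bf x},{\bf y})$ in the HF-diagram relates to the corresponding class in the SF-diagram. By Lemma \ref{lemma:Welldefinedgradings}, the class is unique on each side (since $H_2(\boldSigma(D\times I,T^n);\Z)=0$), so this is a matter of comparing domains. The SF-domain is obtained from the HF-domain by deleting the regions $D_w$ and $D_z$: deleting $D_w$ changes the local multiplicity at $w$ from $n_w(\phi)$ to $0$, and likewise for $z$. The key observation is that when one glues on $F_{k,1}$ along the old $D_z$-boundary circle, the surgered surface is exactly the product region used to define ${\bf A}_{SF}$, so the basepoint $p\in F_{k,1}$ sits in a region whose local multiplicity equals $n_z(\phi)$; hence ${\bf A}_{SF}({\bf x},{\bf y})=n_z(\phi)=n_z(\phi)-n_w(\phi)+n_w(\phi)={\bf A}_{HF}({\bf x},{\bf y})$ once one checks $n_w(\phi)=0$ after the decomposition (which holds because, as noted in the proof of Theorem \ref{thm:KnotFloerRel}, $n_a(\phi)=0$ for $a\in F'$, forcing the relevant $w$-multiplicity to vanish). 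For the Maslov grading I would apply $\mu(\phi)=e(D)+n_{\bf x}(D)+n_{\bf y}(D)$ to both domains and compute the difference: deleting the disk $D_w$ decreases the Euler measure by $1$ (removing a disk region) and the term $n_w$ appears with multiplicity $2n$ through the cable structure — this is precisely where the factor $(n-1)$ (resp.\ $(n-2)$) enters, since the $n$-fold cyclic branched cover makes the Heegaard surface a double branched cover of $D_{3/2}$ over $n$ points, and unwinding the Euler-measure bookkeeping at the branch points attached to the $z$-region contributes $(n-1)n_z(\phi)$ to the discrepancy $\mu(\phi)-2n_w(\phi)$.

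The main obstacle, and the step requiring the most care, is this last Euler-measure computation at the branch locus: one must carefully account for how the corners and regions of the domain near the capping disks transform under the branched covering $\Sigma_{SF}\to D_{3/2}$, and verify that the net effect on $e(D)+n_{\bf x}(D)+n_{\bf y}(D)$ is exactly $-(n-1){\bf A}_{HF}$ in the odd case. For the even case the argument splits as a disjoint union, and the two pieces contribute ${\bf M}_{HF}({\bf x}_1,{\bf y}_1)+{\bf M}_{HF}({\bf x}_2,{\bf y}_2)$, but only one of the two capping disks ($D_z$ rather than $D_w$) on each factor is affected by the gluing of $F_{k-1,2}$, which is why the coefficient is $(n-2)$ and not $2(n-1)$ — a point I would verify by explicitly drawing the domain near the glued region as in Figure \ref{fig:EvenHD}. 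Once the local model at a single branch point is worked out, additivity of the Euler measure under gluing (Definition of $e(D)$) propagates the computation to the whole domain, and the proposition follows.
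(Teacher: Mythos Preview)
Your overall strategy---track the domain of a Whitney disk through the diagram modifications and apply Lipshitz's formula $\mu(\phi)=e(D)+n_{\bf x}(D)+n_{\bf y}(D)$---is the same as the paper's. But you have misidentified the mechanism behind the $(n-1)$ shift, and this misidentification would make the computation much harder than it needs to be (and your sketch of it is not correct as written).

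You describe the shift as arising from ``Euler-measure bookkeeping at the branch points'' of the covering $\Sigma_{SF}\to D_{3/2}$, with $n_w$ ``appearing with multiplicity $2n$ through the cable structure''. None of this is what actually happens. The branched-cover description of $\Sigma$ was used in Theorem \ref{thm:KnotFloerRel} only to identify the sutured manifold; for the grading computation the paper works entirely with the intrinsic surface topology. The point is this: in passing from the doubly-pointed diagram to $(\Sigma,\boldalpha,\boldbeta)_n$ one removes $D_w$ (so one simply normalizes to the representative $\phi$ with $n_w(\phi)=0$, which makes ${\bf M}_{HF}=\mu(\phi)$ and ${\bf A}_{HF}=n_z(\phi)$) and replaces the disk $D_z$ by the genus-$k$ surface $F_{k,1}$. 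The elementary domain $\mathcal{D}_z$ containing $D_z$ is thereby replaced by a domain $\mathcal{D}_F$ with the same corners and the same $n_{\bf x},n_{\bf y}$ contributions but with $\chi(\mathcal{D}_F)=\chi(\mathcal{D}_z)-2k$, hence $e(\mathcal{D}_F)=e(\mathcal{D}_z)-2k$. Since $\phi$ has coefficient $n_z(\phi)$ on that domain, additivity gives $\mu(\phi_n)=\mu(\phi)-2k\,n_z(\phi)$, and $2k=n-1$. That is the entire computation in the odd case; no branch-point analysis is needed.

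Your even-case sketch also does not match what actually occurs. The $(n-2)$ arises because for a class $\phi\in\pi_2({\bf x}_1,{\bf y}_1)_K$ one replaces $D_z$ not just by $F_{k-1,2}$ but by the entire piece $F_{k-1,2}\cup_\partial(\Sigma_{K^r}-(D_w)_{K^r})$; the genus jumps by $(k-1)+d_{K^r}$, contributing $-2[(k-1)+d_{K^r}]$ to the Euler measure, but the new region also contains the $d_{K^r}$ points of ${\bf m}$, each contributing $+1$ to both $n_{{\bf x}_1\otimes{\bf m}}$ and $n_{{\bf y}_1\otimes{\bf m}}$, so the net change is $-2(k-1)=-(n-2)$. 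Classes coming from the $K^r$ side have literally the same domain in $\Sigma_n$ and contribute no shift. The result then follows by additivity.
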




\begin{proof}
We address the odd and even cases separately.

{\flushleft {\bf Odd Case:}}

Let $n = 2k+1$ for $k \in \mathbb{Z}_{\geq 0}$.  In the proof of Theorem \ref{thm:KnotFloerRel}, we showed that we can obtain a Heegaard diagram $(\Sigma,\boldalpha,\boldbeta)_n$ for $\boldSigma(D \times I,T^n)$ from a doubly-pointed Heegaard diagram $(\Sigma,\boldalpha,\boldbeta,w,z)$ for the pair $(\boldSigma(S^3,K),\widetilde{K})$,  by 

\begin{enumerate} 
  \item removing a small disk $D_w$ around $w$ from $\Sigma$, and 
  \item replacing a small disk $D_z$ around $z$ with $\boldSigma(D,\vec{p}) = F_{k,1}$.  
\end{enumerate}

See Figure \ref{fig:OddHD}.

Now, consider ${\bf x},{\bf y} \in \mathbb{T}_\alpha \cap \mathbb{T}_\beta = (\mathbb{T}_{{\alpha}} \cap \mathbb{T}_{\beta})_n$, and let $\pi_2({\bf x},{\bf y})$ and $\pi_2({\bf x},{\bf y})_n$ denote the homotopy classes of maps connecting ${\bf x}$ to ${\bf y}$ in $Sym^d(\Sigma)$ and $Sym^d(\Sigma_n)$, respectively.

First, note that 
\[\pi_2({\bf x},{\bf y}) \neq \emptyset \,\, \Leftrightarrow \,\,\pi_2({\bf x},{\bf y})_n \neq \emptyset.\]
 Furthermore, if $\phi \in \pi_2({\bf x},{\bf y})$, there is a corresponding $\phi_n \in \pi_2({\bf x},{\bf y})_n$ obtained as follows.  Let $\mathcal{D}_z$  be the elementary domain in $\Sigma$ containing $D_z$ and $\mathcal{D}_F$ be the elementary domain in $\Sigma_n$ containing $F_{k,1}$.  Then, if we express $\phi$ as a sum of elementary domains:
\[\phi = a_z \mathcal{D}_z + \sum_{\mathcal{D}_i \neq \mathcal{D}_z} a_i \mathcal{D}_i,\]
$\phi_n$ is given by
\[\phi_n = a_z \mathcal{D}_F + \sum_{\mathcal{D}_i \neq \mathcal{D}_F} a_i \mathcal{D}_i.\]
Less formally, we say that $\phi_n$ is obtained from $\phi$ by ``replacing $D_z$ with $F_{k,1}$.''  See Figure \ref{fig:ReplaceOdd}.  Note that $n_z(\phi) = n_p(\phi_n) = a_z$, and, hence,
\[{\bf A}_{SF}({\bf x},{\bf y}) = {\bf A}_{HF}({\bf x},{\bf y}).\]

\begin{figure}
\begin{center}
\input{Figures/ReplaceOdd.pstex_t}
\end{center}
\caption{The domain $\mathcal{D}_z \subset \Sigma$ containing $D_z$ and the corresponding domain $\mathcal{D}_F \subset \Sigma_n$ containing $F_{k,1}$.}
\label{fig:ReplaceOdd}
\end{figure}

Furthermore, we shall see that 
\[\mu(\phi_n) = \mu(\phi) - 2kn_z(\phi).\]
This follows from Proposition \ref{prop:MasIndex}, Lipshitz's Maslov index formula for domains.  In particular,
\[\mu(\mathcal{D}_F) = \mu(\mathcal{D}_z) - 2k,\]
since 
\[\mbox{genus}(\mathcal{D}_F) = k + \mbox{genus}(\mathcal{D}_z),\] and hence
\[e(\mathcal{D}_F) = e(\mathcal{D}_z) - 2k.\] 
All other terms in Lipshitz's formula agree for $\mathcal{D}_F$ and $\mathcal{D}_z$.  The additivity of Maslov index for domains then implies that
\[{\bf M}_{SF}({\bf x},{\bf y}) = {\bf M}_{HF}({\bf x},{\bf y}) - (n-1){\bf A}_{HF}({\bf x},{\bf y}),\]
as desired.

{\flushleft {\bf Even case:}}

Suppose $n = 2k$ for $k \in \mathbb{Z}_{\geq 0}$, and let $(\Sigma,\boldalpha,\boldbeta)_F$ be the sutured Heegaard diagram associated to the product sutured manifold \[\boldSigma(D \times I, \vec{p} \times I) = (F_{k-1,2} \times I,\partial F_{k-1,2} \times I).\]  Let $(\Sigma,\boldalpha,\boldbeta,w,z)_K$ be a doubly-pointed Heegaard diagram for $(S^3,K)$, and $(\Sigma,\boldalpha,\boldbeta,w,z)_{K^r}$ a doubly-pointed Heegaard diagram for $(S^3,K^r)$, where $K^r$ denotes $K$'s orientation reverse.\footnote{Note that we may obtain $\Sigma_{K^r}$ from $\Sigma_{K}$ by switching the positions of $w$ and $z$.}  

In the proof of Theorem \ref{thm:KnotFloerRel}, we showed that we can obtain a Heegaard diagram $(\Sigma,\boldalpha,\boldbeta)_n$ for $\boldSigma(D \times I,T^n)$ by:

\begin{enumerate} 
  \item first removing a small disk $(D_z)_K$ around $z$ in $\Sigma_K$ and $(D_w)_{K^r}$ around $w$ in  $\Sigma_{K^r}$,
  \item gluing $F_{k-1,2}$ to $(\Sigma_K - D_z) \amalg (\Sigma_{K^r} - D_w)$ along their circular boundary components, and finally
  \item removing small disks $(D_w)_K$ around $w$ in $\Sigma_K$ and $(D_z)_{K^r}$ around $z$ in $\Sigma_{K^r}$.
\end{enumerate}

See Figure \ref{fig:EvenHD}.  Note that, to compute ${\bf A}_{SF}$ in this setting, we will replace the disk $(D_z)_{K^r}$ we removed in step $3$ above, and count intersections, $n_z$, with it, as detailed in Definition \ref{defn:SFHAlexGradingEven}.

Since
\[(\mathbb{T}_{\alpha} \cap \mathbb{T}_{\beta})_n \cong (\mathbb{T}_\alpha \cap \mathbb{T}_\beta)_{K} \otimes (\mathbb{T}_\alpha \cap \mathbb{T}_\beta)_{K^r},\]we will from now on denote each generator of $(\mathbb{T}_{\alpha} \cap \mathbb{T}_{\beta})_n$ as a tensor product of a generator ${\bf m}_1 \in (\mathbb{T}_{\alpha} \cap \mathbb{T}_\beta)_K$ and ${\bf m}_2 \in (\mathbb{T}_{\alpha} \cap \mathbb{T}_\beta)_{K^r}$.

Now, consider ${\bf x}_1,{\bf y}_1 \in (\mathbb{T}_\alpha \cap \mathbb{T}_\beta)_{K}$ and ${\bf x}_2,{\bf y}_2 \in (\mathbb{T}_{{\alpha}} \cap \mathbb{T}_{\beta})_{K^r}$.  Let $\pi_2({\bf x}_1,{\bf y}_1)_K$ (resp., $\pi_2({\bf x}_1,{\bf y}_2)_{K^r}$) denote the homotopy classes of maps connecting ${\bf x}_1$ to ${\bf y}_1$ (resp., ${\bf x}_2$ to ${\bf y}_2$) in $Sym^{d_K}(\Sigma_K)$ (resp., $Sym^{d_{K^r}}(\Sigma_{K^r})$), and let $\pi_2({\bf x}_1 \otimes {\bf x}_2,{\bf y}_1 \otimes {\bf y}_2)_n$ denote the homotopy classes of maps in $Sym^{d}(\Sigma_n)$.  Here, $d = d_K + d_{K^r}$.  

\begin{figure}
\begin{center}
\input{Figures/ReplaceEven1.pstex_t}
\end{center}
\caption{The domain $\mathcal{D}_z \subset \Sigma$ containing $D_z$ and the corresponding domain $\mathcal{D}_n \subset \Sigma_n$ containing $F_{k-1,2}\amalg_{\partial}(\Sigma_{K^r} - (D_w)_{K^r})$.}
\label{fig:ReplaceEven1}
\end{figure}

If $\phi \in \pi_2({\bf x}_1, {\bf y}_1)$, then there is a corresponding $\phi_n \in \pi_2({\bf x}_1 \otimes {\bf m},{\bf y}_1 \otimes {\bf m})$ for any ${\bf m} \in (\mathbb{T}_\alpha \cap \mathbb{T}_\beta)_{K^r}$, obtained by ``replacing $D_z$ with $[F_{k-1,2} \amalg_{\partial} (\Sigma_{K^r} - (D_w)_{K^r})]$'' as we did in the odd case.  See Figure \ref{fig:ReplaceEven1}.  It is clear that $n_z(\phi) = n_z(\phi_n),$ and, hence, that
\[{\bf A}_{HF}({\bf x}_1,{\bf y}_1) = {\bf A}_{SF}({\bf x}_1 \otimes {\bf m},{\bf y}_1 \otimes {\bf m})\]

for any ${\bf m} \in (\mathbb{T}_\alpha \cap \mathbb{T}_\beta)_{K^r}.$

To compute the relative Maslov grading associated to $\phi_n$, we need to analyze the Maslov index associated to the smallest domain $\mathcal{D}_n \subset \Sigma_n$ containing $F_{k-1,2} \amalg (\Sigma_{K^r} - (D_w)_{K^r})$ and compare it to the Maslov index associated to the smallest domain $\mathcal{D} \subset \Sigma_{K}$ containing $(D_z)_{K}$.  Note that $d_{K^r}$ is the genus of $\Sigma_{K^r}$ and $k-1$ is the genus of $F_{k-1,2}$.

Lipshitz's Maslov index formula tells us:
\[ \mu(\mathcal{D}) = e(\mathcal{D}) + \sum n_{{\bf x}_1}(\mathcal{D}) + \sum n_{{\bf y}_1}(\mathcal{D})\]
and

\begin{eqnarray*}
\mu(\mathcal{D}_n) &=& e(\mathcal{D}_n) + \sum n_{{\bf x}_1 \otimes {\bf m}}(\mathcal{D}_n) + \sum n_{{\bf y}_1 \otimes {\bf m}}(\mathcal{D}_n)\\
                 &=& e(\mathcal{D}_n) + \sum n_{{\bf x}_1}(\mathcal{D}_n) + \sum n_{{\bf y}_1}(\mathcal{D}_n) + \sum 2n_{\bf m}(\mathcal{D}_n)\\
                   &=& e(\mathcal{D}_n) + \sum n_{{\bf x}_1}(\mathcal{D}_n) + \sum n_{{\bf y}_1}(\mathcal{D}_n) + 2d_{K^r}\\
\end{eqnarray*}  

Since $\sum n_{{\bf x}_1}(\mathcal{D}) = \sum n_{{\bf x}_1}(\mathcal{D}_n)$ and $\sum n_{{\bf y}_1}(\mathcal{D}) = \sum n_{{\bf y}_1}(\mathcal{D}_n)$, the difference of the two Maslov indices is:
\[\mu(\mathcal{D}) - \mu(\mathcal{D}_n) = \left[e(\mathcal{D})\right] - \left[e(\mathcal{D}_n) + 2d_{K^r}\right]\]
But 
\[\mbox{genus}(\mathcal{D}_n)= \mbox{genus}(\mathcal{D}) + (k-1) + d_{K^r};\] 
hence, 
\[e(\mathcal{D}_n) = e(\mathcal{D}) - 2[(k-1) + d_{K^r}],\] which implies:
\[\mu(\mathcal{D}) - \mu(\mathcal{D}_n) = 2(k-1) = n-2.\]

Using the additivity of the Maslov index of domains, we therefore conclude that \[ {\bf M}_{SF}({\bf x}_1 \otimes {\bf m}, {\bf y}_1 \otimes {\bf m}) = {\bf M}_{HF}({\bf x}_1,{\bf y}_1) - (n-2){\bf A}_{HF}({\bf x}_1,{\bf y}_1)\]

if ${\bf m} \in (\mathbb{T}_\alpha \cap \mathbb{T}_\beta)_{K^r}$.

\begin{figure}
\begin{center}
\input{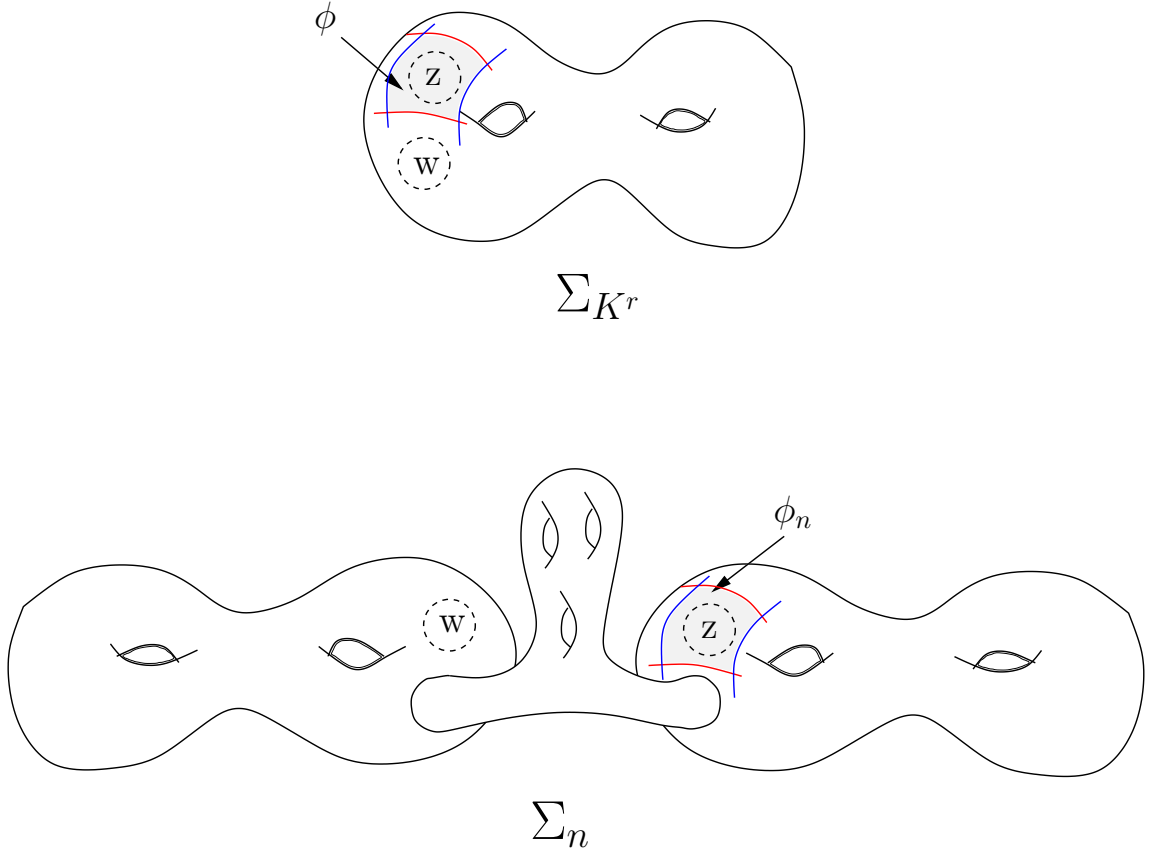}
\end{center}
\caption{A domain $\mathcal{D} \subset \Sigma_{K^r}$ representing $\phi \in \pi_2({\bf x}_2,{\bf y}_2)_{K^r}$ along with the corresponding domain $\mathcal{D}_n \subset \Sigma_n$ representing $\phi_n \in \pi_2({\bf m} \otimes {\bf x}_2, {\bf m} \otimes {\bf y}_2)_n$.}
\label{fig:ReplaceEven2}
\end{figure}

Similarly, if $\phi \in \pi_2({\bf x}_2, {\bf y}_2)_{K^r}$, then there is a corresponding $\phi_n \in \pi_2({\bf m} \otimes {\bf x}_2, {\bf m} \otimes {\bf y}_2)_n$ for any ${\bf m} \in (\mathbb{T}_\alpha \cap \mathbb{T}_\beta)_K$ which has precisely the same domain, just considered as a subset of $\Sigma_n$, rather than $(\Sigma)_{K^r}$.  See Figure \ref{fig:ReplaceEven2}.  It is therefore clear that:

\begin{eqnarray*}
{\bf M}_{SF}({\bf m} \otimes {\bf x}_2,{\bf m} \otimes {\bf y}_2) &=& {\bf M}_{HF}({\bf x}_2, {\bf y}_2)\\
{\bf A}_{SF}({\bf m} \otimes {\bf x}_2,{\bf m} \otimes {\bf y}_2) &=& {\bf A}_{HF}({\bf x}_2, {\bf y}_2)\\
\end{eqnarray*}

for any ${\bf m} \in (\mathbb{T}_\alpha \cap \mathbb{T}_\beta)_K$.

Hence, we can use additivity:
\begin{eqnarray*}
{\bf M}_{SF}({\bf x}_1 \otimes {\bf x}_2, {\bf y}_1 \otimes {\bf y}_2) &=& {\bf M}_{SF}({\bf x}_1 \otimes {\bf x}_2, {\bf y}_1 \otimes {\bf x}_2) + {\bf M}_{SF}({\bf y}_1 \otimes {\bf x}_2, {\bf y}_1 \otimes {\bf y}_2),\\
{\bf A}_{SF}({\bf x}_1 \otimes {\bf x}_2, {\bf y}_1 \otimes {\bf y}_2) &=& {\bf A}_{SF}({\bf x}_1 \otimes {\bf x}_2, {\bf y}_1 \otimes {\bf x}_2) + {\bf A}_{SF}({\bf y}_1 \otimes {\bf x}_2, {\bf y}_1 \otimes {\bf y}_2),\\
\end{eqnarray*}
to conclude that
\begin{eqnarray*}
{\bf M}_{SF}({\bf x}_1 \otimes {\bf x}_2, {\bf y}_1 \otimes {\bf y}_2) &=& {\bf M}_{HF}({\bf x}_1, {\bf y}_1) + {\bf M}_{HF}({\bf x}_2, {\bf y}_2) - (n-2){\bf A}_{HF}({\bf x}_1, {\bf y}_1),\\
{\bf A}_{SF}({\bf x}_1 \otimes {\bf x}_2, {\bf y}_1 \otimes {\bf y}_2) &=& {\bf A}_{HF}({\bf x}_1, {\bf y}_1) + {\bf A}_{HF}({\bf x}_2, {\bf y}_2)
\end{eqnarray*}
as desired.

 








\end{proof}

\section{Relationship with Khovanov's gradings}
In this section, we discuss a conjectural relationship between
the two gradings on $\widetilde{Kh}_n(K)$
and the relative Maslov grading on $SFH(\boldSigma(D\times I,T^n))$.

Recall that the spectral sequence of Proposition~\ref{prop:SpecSeq}
$$
E^1=CV(\cP(T))\Rightarrow E^{\infty}=SFH(\boldSigma(D\times I,T))
$$
was defined in terms of a filtered
complex $X^{(0,1)}$, whose differential can be written
as a sum $D^{(0,1)}=D_0+D_1+\ldots+D_\ell$, where $D_k$ counts
homolorphic $(k-2)$-gons.
If $D_0$ vanishes (which we can always
achieve by choosing a suitable Heegaard
multi-diagram), then the $E^1$ term of the spectral sequence
coincides with the $E^0$ term, and hence there is an
identification of vector spaces
$$
X^{(0,1)}=E^0=E^1=CV(\cP(T))
$$
which we can use to define a bigrading on
the vector space $X^{(0,1)}$, by setting
$$
(X^{(0,1)})^{i,j}:=CV(\cP(T))^{i,j}.
$$
The differential $D^{(0,1)}=D_0+D_1+\ldots+D_\ell$ does not preserve
this bigrading; however, it is easy to see that $D_k$
\begin{itemize}
\item
raises the $i$ grading by $k$,
\item
raises the $j$ grading by $2(k-1)$,
\end{itemize}
and thus lowers the $\delta$ grading
$$
\delta(a):=\frac{j(a)}{2}-i(a)
$$
by $1$. It follows that
$D^{(0,1)}$ lowers the $\delta$ grading by $1$,
and hence $\delta$ can be regarded as a
homological grading for the complex $X^{(0,1)}$.
In particular, $\delta$ induces homological gradings
on each of the pages of the spectral sequence $\{E^r\}_{r\geq 0}$,
as well as on the $E^{\infty}$ term.
Let $E^{\infty}_{\delta=d}$ be the
subspace of $E^{\infty}$ which sits in $\delta$ degree $d$.

\begin{conjecture}\label{conj:Gradings}
Let $(\Sigma,\boldalpha,\boldbeta)$ be
a sutured Heegaard diagram for $\boldSigma(D\times I,T)$.
Then there is a function
${\bf M}:\mathbb{T}_{\alpha}\cap\mathbb{T}_{\beta}
\rightarrow\Q$
which satisfies
$$
{\bf M}({\bf x})-{\bf M}({\bf y})=
{\bf M}_{SF}({\bf x},{\bf y})
$$
whenever ${\bf x},{\bf y}\in\mathbb{T}_\alpha\cap\mathbb{T}_\beta$
are two intersection points with $\pi_2({\bf x},{\bf y})\neq\emptyset$,
and such that
$$
E^{\infty}_{\delta=d}=SFH(\boldSigma(D\times I,T))_{{\bf M}=d}
$$
where
$SFH(\boldSigma(D\times I,T))_{{\bf M}=d}\subset
SFH(\boldSigma(D\times I,T))$ is the subspace
which consists of all homology classes
which can be written as linear combinations
of intersection points
${\bf x}\in\mathbb{T}_\alpha\cap\mathbb{T}_\beta$
with ${\bf M}({\bf x})=d$.
\end{conjecture}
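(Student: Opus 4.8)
The plan is to construct the absolute lift ${\bf M}$ in two stages and then verify the two required compatibilities separately. First I would fix the absolute $\Q$--valued Maslov grading: by Theorem~\ref{thm:KnotFloerRel}, $SFH(\boldSigma(D\times I,T^n))$ is isomorphic to one of $\widehat{HF}(\boldSigma(S^3,K))$, $\widehat{HFK}(\boldSigma(S^3,K),\widetilde K)$, or $\widehat{HFK}(S^3,K\# K^r)$, each of which carries a canonical absolute $\Q$--grading by the work of Ozsv\'ath--Szab\'o. Pulling this grading back through the chain--level identifications used in the proof of Theorem~\ref{thm:KnotFloerRel} (and recorded explicitly in Proposition~\ref{prop:gradingsodd}) yields a function ${\bf M}$ on $\mathbb{T}_\alpha\cap\mathbb{T}_\beta$. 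That ${\bf M}({\bf x})-{\bf M}({\bf y})={\bf M}_{SF}({\bf x},{\bf y})$ whenever $\pi_2({\bf x},{\bf y})\neq\emptyset$ should then be immediate from Proposition~\ref{prop:gradingsodd} together with Lemma~\ref{lemma:Welldefinedgradings}: ${\bf M}_{SF}$ is by definition $\mu(\phi)$ for the unique $\phi\in\pi_2({\bf x},{\bf y})$, and the grading shifts appearing in Proposition~\ref{prop:gradingsodd} are additive in the Alexander grading, which itself depends only on the relative data.

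Second, I would compare the $\delta$--grading with ${\bf M}$ on the page $E^0=E^1$. Choosing the sutured Heegaard multi--diagram so that $D_0=0$, as in the discussion preceding the conjecture, one has the vector space identification $X^{(0,1)}=CV(\cP(T))$ and the bigraded decomposition $(X^{(0,1)})^{i,j}=CV(\cP(T))^{i,j}$. On each summand $V(\cP_\cI(T))$ the isomorphism $\psi_{T_\cI}$ of Proposition~\ref{prop:KhovSFHFunctEquiv} matches the exterior--algebra generators of $V(\cP_\cI(T))$ with the $SFH$ generators of $Y(\cI)=\boldSigma(D\times I,T_\cI)$, so what is needed is a formula expressing ${\bf M}$ of such a generator in terms of its Khovanov $(i,j)$ grading. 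Concretely, I would show, using Lipshitz's index formula (Proposition~\ref{prop:MasIndex}) applied to the explicit branched--cover Heegaard diagrams of Section~\ref{sec:SFFunctor}, that for generators in a fixed resolution $\cI$ the Maslov difference is governed by Euler measures of domains on $\boldSigma(D_{\frac{3}{2}},\vec p)$, and that this matches the quantum grading difference dictated by the exterior--algebra structure on $V(\cP_\cI(T))$: wedging with a circle class drops $j$ by $2$ and, correspondingly, drops the Maslov grading of the associated $SFH$ generator by one. This would give ${\bf M}({\bf x})=\delta(a)+c_\cI$ for a constant $c_\cI$ depending only on $\cI$ (and on $n$ and the orientation of $K$).

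The main obstacle is the last step: proving that the constants $c_\cI$ assemble correctly, i.e.\ that passing to $E^\infty$ collapses them to a single constant so that the induced grading on $E^\infty$ is literally ${\bf M}$ rather than ${\bf M}$ shifted by an amount varying along the filtration. Equivalently, one must show that each higher differential $D_k$ ($k\geq 2$) not only lowers $\delta$ by $1$ but respects the absolute Maslov grading inherited from $SFH(Y(\cI_\infty))$ via the iterated--mapping--cone identification of Proposition~\ref{prop:LinkSurg} --- and this requires knowing that the holomorphic--polygon maps $f_{\alpha,\eta(\cI^0),\ldots,\eta(\cI^k)}$ are additive with respect to an absolute grading on the branched double cover, a point that does not follow purely formally and would need a precise description of how the absolute $\Q$--grading interacts with surgery triangle maps on imbedded knots in $\boldSigma(D\times I,T)$. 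A reasonable intermediate target --- and the way I would first test the conjecture --- is to verify it by direct computation for $K$ the unknot and for the trefoil and figure--eight, where both sides are explicitly computable, and then attempt to promote the resolution--by--resolution grading formula above to a statement about the whole filtered complex by induction on the number of crossings, using the mapping--cone description of Proposition~\ref{prop:LinkSurg}.
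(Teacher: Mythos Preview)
The statement is labeled a \emph{Conjecture} in the paper and is not proved there; the authors only use it conditionally to derive Corollary~\ref{cor:Gradings}. So there is no proof in the paper to compare your proposal against.

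That said, your outline correctly isolates where the difficulty lies. Two remarks. First, your construction of ${\bf M}$ via Theorem~\ref{thm:KnotFloerRel} and Proposition~\ref{prop:gradingsodd} applies only to tangles of the form $T=T^n$, whereas the conjecture is stated for an arbitrary admissible balanced tangle $T\subset D\times I$; for general $T$ there is no identification of $SFH(\boldSigma(D\times I,T))$ with a classical knot Floer group carrying a canonical absolute $\Q$--grading, so your first step does not even get off the ground in that generality. Second, and more fundamentally, the obstacle you yourself flag --- that the constants $c_\cI$ must assemble coherently across the cube, equivalently that the polygon maps $D_k$ for $k\geq 2$ respect an absolute Maslov grading --- is exactly the missing ingredient, and is presumably the reason the statement is a conjecture rather than a theorem. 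Establishing this would require a grading--compatible version of the quasi-isomorphism in Proposition~\ref{prop:LinkSurg}, which is not available in the paper and does not follow from anything proved there. Your proposed checks on small knots and induction on crossing number would not circumvent this: the inductive step already needs the grading behavior of the mapping--cone identification, which is the very thing in question.
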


The following definition is essentially taken
from \cite{MR2034399}.

\begin{definition}
The {\em homological width} of $\widetilde{Kh}_n(K)$ is
the positive integer
$$
hw(\widetilde{Kh}_n(K)):=
\max_a\delta(a)-\min_a\delta(a)+1
$$
where here $a$ is allowed to vary
over all nonzero homology
classes $a\in\widetilde{Kh}_n(K)$ which are
homogeneous with respect to the bigrading
on $\widetilde{Kh}_n(K)$.
\end{definition}

\begin{corollary}\label{cor:Gradings} (of Conjecture~\ref{conj:Gradings})
Let $K$ be a knot in $S^3$ and let $g(K)$ denote its genus.
Then
$$
\liminf_{n \rightarrow\infty}\frac{hw(\widetilde{Kh}_n(K))}{2(n-1)}\geq g(K)
$$
where the limit is taken over odd $n$ only, and
$$
\liminf_{n\rightarrow\infty}\frac{hw(\widetilde{Kh}_n(K))}{2(n-2)}\geq g(K)
$$
where the limit is taken over even $n$ only.
\end{corollary}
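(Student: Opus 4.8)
The plan is to read off Corollary~\ref{cor:Gradings} from Conjecture~\ref{conj:Gradings}, the grading computations of Proposition~\ref{prop:gradingsodd}, and the genus--detection theorem of Ozsv{\'a}th--Szab{\'o} \cite{MR2023281}; all the genuine content sits in the conjecture, and what remains is grading bookkeeping. Recall from the discussion preceding Conjecture~\ref{conj:Gradings} that the differential $D^{(0,1)}$ on $X^{(0,1)}$ has $\delta$--degree $-1$, where $\delta(a)=\tfrac{j(a)}{2}-i(a)$, so $\delta$ descends to a homological grading on every page of the spectral sequence of Proposition~\ref{prop:SpecSeq} and every $d_r$ has $\delta$--degree $-1$; in particular $E^{\infty}_{\delta=d}$ is a subquotient of $E^{2}_{\delta=d}=\bigl(\widetilde{Kh}_n(\overline{K})\bigr)_{\delta=d}$, whence
\[
hw\bigl(\widetilde{Kh}_n(\overline{K})\bigr)\ \geq\ \max\{\,d : E^{\infty}_{\delta=d}\neq 0\,\}-\min\{\,d : E^{\infty}_{\delta=d}\neq 0\,\}+1 .
\]
Since mirroring a knot negates the bigrading on Khovanov's colored homology, hence negates $\delta$, homological width is a mirror invariant, so $hw(\widetilde{Kh}_n(K))=hw(\widetilde{Kh}_n(\overline{K}))$. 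Finally Conjecture~\ref{conj:Gradings} identifies $E^{\infty}_{\delta=d}$ with $SFH(\boldSigma(D\times I,T^n))_{{\bf M}=d}$ for an absolute lift ${\bf M}$ of the relative Maslov grading ${\bf M}_{SF}$, so the task reduces to bounding below the spread of ${\bf M}_{SF}$ on $SFH(\boldSigma(D\times I,T^n))$.

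To produce that bound I would feed Theorem~\ref{thm:KnotFloerRel} and Proposition~\ref{prop:gradingsodd} into the Ozsv{\'a}th--Szab{\'o} genus bound. For $n$ odd, $SFH(\boldSigma(D\times I,T^n))\cong\widehat{HFK}(\boldSigma(S^3,K),\widetilde{K})$ with $\widetilde{K}$ nullhomologous and $g(\widetilde{K})=g(K)$, and Proposition~\ref{prop:gradingsodd} gives ${\bf M}_{SF}({\bf x},{\bf y})={\bf M}_{HF}({\bf x},{\bf y})-(n-1){\bf A}_{HF}({\bf x},{\bf y})$. By \cite{MR2023281} (together with the conjugation symmetry of knot Floer homology) $\widehat{HFK}(\boldSigma(S^3,K),\widetilde{K})$ is supported in relative Alexander gradings of total span exactly $2g(K)$, so there are nonzero homology classes represented by generators ${\bf x},{\bf y}$ with ${\bf A}_{HF}({\bf x},{\bf y})=2g(K)$; for such a pair ${\bf M}_{SF}({\bf x},{\bf y})={\bf M}_{HF}({\bf x},{\bf y})-2(n-1)g(K)$, and ${\bf M}_{HF}$, being a relative grading on the fixed finitely generated group $\widehat{HFK}(\boldSigma(S^3,K),\widetilde{K})$, is bounded in absolute value by a constant $C=C(K)$ independent of $n$. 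Hence the ${\bf M}_{SF}$--spread of $SFH(\boldSigma(D\times I,T^n))$ is at least $2(n-1)g(K)-C$. For $n$ even, $SFH(\boldSigma(D\times I,T^n))\cong\widehat{HFK}(S^3,K\# K^r)\cong\widehat{HFK}(S^3,K)\otimes\widehat{HFK}(S^3,K^r)$, and Proposition~\ref{prop:gradingsodd} gives ${\bf M}_{SF}({\bf x}_1\otimes{\bf x}_2,{\bf y}_1\otimes{\bf y}_2)={\bf M}_{HF}({\bf x}_1,{\bf y}_1)+{\bf M}_{HF}({\bf x}_2,{\bf y}_2)-(n-2){\bf A}_{HF}({\bf x}_1,{\bf y}_1)$; choosing $[{\bf x}_1],[{\bf y}_1]$ nonzero in the two extreme Alexander gradings $\pm g(K)$ of $\widehat{HFK}(S^3,K)$ (nonzero by \cite{MR2023281} and the symmetry $\widehat{HFK}(S^3,K,i)\cong\widehat{HFK}(S^3,K,-i)$) and ${\bf x}_2={\bf y}_2$ any generator of a nonzero class of $\widehat{HFK}(S^3,K^r)$, the same estimate gives an ${\bf M}_{SF}$--spread of at least $2(n-2)g(K)-C$.

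Combining the two steps, $hw(\widetilde{Kh}_n(K))\geq 2(n-1)g(K)-C+1$ for $n$ odd and $hw(\widetilde{Kh}_n(K))\geq 2(n-2)g(K)-C+1$ for $n$ even; dividing by $2(n-1)$ and by $2(n-2)$ respectively and letting $n\to\infty$ through the appropriate parity yields the two $\liminf$ bounds of Corollary~\ref{cor:Gradings}. Granting Conjecture~\ref{conj:Gradings}, the only points requiring care are routine: that the spectral sequence genuinely respects $\delta$ (recorded in the text), that the extremal Alexander--graded classes of $\widehat{HFK}$ on the Floer side really are nonzero and are seen by the isomorphism of Theorem~\ref{thm:KnotFloerRel}, and that the subleading term ${\bf M}_{HF}$ is bounded independently of $n$ (immediate, since $K$ is fixed). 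The one serious obstacle --- and the reason the corollary is only conditional --- is Conjecture~\ref{conj:Gradings} itself, i.e.\ showing that the link--surgeries spectral sequence matches the Khovanov $\delta$--grading with a lift of the sutured Maslov grading; none of the deduction above is problematic.
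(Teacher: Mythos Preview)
Your argument is correct and follows essentially the same route as the paper's own sketch: both combine Conjecture~\ref{conj:Gradings} (identifying the $\delta$--grading on $E^\infty$ with a lift of ${\bf M}_{SF}$), the grading comparison of Proposition~\ref{prop:gradingsodd}, and the Ozsv\'ath--Szab\'o/Ni/Juh\'asz genus--detection statement to produce a lower bound of the form $2(n-1)g(K)-C$ (odd) or $2(n-2)g(K)-C$ (even) on the homological width, then divide and take the limit. Your write-up is in fact slightly more complete than the paper's sketch---you treat both parities, make the mirror step $hw(\widetilde{Kh}_n(K))=hw(\widetilde{Kh}_n(\overline{K}))$ explicit, and isolate the single non-obvious step as Conjecture~\ref{conj:Gradings}---whereas the paper organizes the same estimate Spin$^c$--structure by Spin$^c$--structure; the only place to be a touch more careful is that ${\bf A}_{HF}({\bf x},{\bf y})$ is only defined when ${\bf x},{\bf y}$ lie in the same Spin$^c$ structure on the ambient $3$--manifold, so the Alexander spread $2g(K)$ you invoke must be realized within a single Spin$^c$ structure (this is exactly what the cited genus--detection results give, and is how the paper phrases it via $\max_{\mathfrak{s}}A_{\mathfrak{s}}(\widetilde{K})=2g(K)$).
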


\begin{proof}{\em (Sketch)}
We only discuss the odd case, the even
case being completely analogous.
Thus, let $n\in\mathbb{Z}_{>0}$ be odd, and let
$(\Sigma,\boldalpha,\boldbeta)_n$ and
$(\Sigma,\boldalpha,\boldbeta,w,z)$
be admissible Heegaard diagrams for $\boldSigma(D\times I,T^n)$
and $(\boldSigma(S^3,K),\widetilde{K})$, respectively,
as in the proof of Proposition~\ref{prop:gradingsodd}.
For a Spin$^c$ structure
$\mathfrak{s}\in\mathfrak{S}:=Spin^c(\boldSigma(S^3,K))$,
define
$$
M_\mathfrak{s}(K,n):=
\max{\bf M}_{SF}({\bf x},{\bf y})
$$
$$
M_\mathfrak{s}(\widetilde{K}):=
\max{\bf M}_{HF}({\bf x},{\bf y})
$$
$$
A_\mathfrak{s}(\widetilde{K}):=
\max{\bf A}_{HF}({\bf x},{\bf y})
$$
where the maxima are taken over all pairs of intersection points
${\bf x},{\bf y}\in\mathbb{T}_\alpha\cap\mathbb{T}_\beta$
which represent the given Spin$^c$ structure
$\mathfrak{s}\in\mathfrak{S}$.
Using Conjecture~\ref{conj:Gradings} and Proposition~\ref{prop:gradingsodd},
one can easily check that
$$
hw(\widetilde{Kh}_n(K))>\max_{\mathfrak{s}\in\mathfrak{S}}
M_\mathfrak{s}(K,n)\geq
\max_{\mathfrak{s}
\in \mathfrak{S}}\left[(n-1)A_\mathfrak{s}(\widetilde{K})
-M_\mathfrak{s}(\widetilde{K})\right]
$$
and hence
$$
\frac{hw(\widetilde{Kh}_n(K))}{n-1}>
\max_{\mathfrak{s}\in \mathfrak{S}}A_\mathfrak{s}(\widetilde{K})-\epsilon_n
$$
where $\epsilon_n:=
\max_{\mathfrak{s}\in\mathfrak{S}}M_\mathfrak{s}(\widetilde{K})/(n-1)$.
Since $\epsilon_n\rightarrow 0$ as $n\rightarrow\infty$,
the corollary now follows from the well-known
fact \cite{MR2023281} (see also \cite{GT064360}, \cite{MR2390347}) that
$$
\max_{\mathfrak{s}\in \mathfrak{S}}
A_\mathfrak{s}(\widetilde{K})= 2g(\widetilde{K})=2g(K).
$$
\end{proof}

\bibliography{SFHExact}
\end{document}